\newtheorem{theorem}{Theorem}[section]
\newtheorem{proposition}[theorem]{Proposition}
\newtheorem{definition}[theorem]{Definition}
\newtheorem{corollary}[theorem]{Corollary}
\newtheorem{lemma}[theorem]{Lemma}
\newtheorem{example}[theorem]{Example}
\newtheorem{problem}{Problem}
\theoremstyle{remark}
\newtheorem{remark}[theorem]{Remark}
\def\vec{\mathbf}
\begin{document}
\title{\bf{Global geometry and topology of spacelike stationary surfaces in the 4-dimensional Lorentz space}}

\author{Zhiyu Liu \and Xiang Ma%
  \thanks{LMAM, School of Mathematical Sciences, Peking University,
 Beijing 100871, People's Republic of China. \texttt{maxiang@math.pku.edu.cn}, Tel:+86-10-62767576, Fax:+86-10-62751801. Corresponding author.}\and
Changping Wang%
  \thanks{College of Mathematics and Computer Sciences,
Fujian Normal University, Fuzhou 350108, People's Republic of China. \texttt{cpwang@fjnu.edu.cn}}
\and Peng Wang%
  \thanks{Department of Mathematics, Tongji University, Shanghai 200092, People's Republic of China. \texttt{netwangpeng@tongji.edu.cn}}}

\date{\today}

\maketitle

\begin{center}
{\bf Abstract}
\end{center}

For spacelike stationary (i.e. zero mean curvature) surfaces in 4-dimensional Lorentz space, one can naturally introduce two Gauss maps and a Weierstrass-type representation. In this paper we investigate the global geometry of such surfaces systematically. The total Gaussian curvature is related with the surface topology as well as the indices of the so-called good singular ends by a Gauss-Bonnet type formula. On the other hand, as shown by a family of counter-examples to Osserman's theorem, finite total curvature no longer implies that Gauss maps extend to the ends. Interesting examples include the deformations of the classical catenoid, the helicoid, the Enneper surface, and Jorge-Meeks' k-noids. Each family of these generalizations includes embedded examples in the 4-dimensional Lorentz space, showing a sharp contrast with the 3-dimensional case.

\hspace{2mm}

{\bf Keywords:}  stationary surface, minimal surface, Weierstrass representation,
Gauss map, finite total curvature, embedding\\

{\bf MSC(2000):\hspace{2mm} 53A10, 53C42, 53C45}

\tableofcontents

\section{Introduction}

Zero mean curvature spacelike surfaces in 4-dimensional Lorentz
space $\mathbb{R}^4_1$ include classical minimal surfaces in
$\mathbb{R}^3$ and maximal surfaces in $\mathbb{R}^3_1$ as special
cases. They are no longer local minimizer or maximizer of the area
functional. Hence we call them \emph{stationary surfaces} which are
always assumed to be spacelike in this paper.

We became interested in this topic when studying several surface
classes arising from variational problems in M\"obius geometry
and Laguerre geometry \cite{Ma-Wang, CPWang2008}. After reducing our
original problems to stationary surfaces in $\mathbb{R}^4_1$, we
searched the literature and found very few papers on this general
case, which contrasted sharply to the rich theory and deep results
on minimal surfaces in $\mathbb{R}^3$ (see the recent survey
\cite{Meeks3}) and maximal surfaces in $\mathbb{R}^3_1$
\cite{Umehara}.

This situation motivated us to extend the general theory about the
global geometry and topology of minimal surfaces in $\mathbb{R}^3$ to
these stationary surfaces in $\mathbb{R}^4_1$.
As a preparation, in Section~2 we introduce the basic
invariants, equations, and the Weierstrass type representation
formula in terms of two meromorphic functions $\phi,\psi$
(corresponding to two Gauss maps, namely the two lightlike normal
directions) and a holomorphic 1-form $\mathrm{d}h$ (the height differential).
These are quite similar to the theory of minimal surfaces in $\mathbb{R}^3$.

However, stationary surfaces have quite different global geometry. According to the
classical Osserman's theorem, a complete minimal surface in
$\mathbb{R}^3$ with finite total Gaussian curvature always has
a well-defined limit of the Gauss map at each end. In contrast with
this, in Section~3 we construct complete stationary surfaces with
finite total curvature whose Gauss maps $\phi,\psi$ both have an essential
singularity at one of the ends (hence the Weierstrass data could not
extend analytically to the whole compactified Riemann surface).

Moreover, the behavior of the ends in $\mathbb{R}^4_1$ is more complicated. There might exist the
so-called \emph{singular end} where the limits of the two lightlike normal directions
coincide. In terms of the Weierstrass data $\phi,\psi$, at the end we have
$\phi=\bar\psi$. When they take this limit value with the same multiplicity,
the total Gaussian curvature will diverge. Such an end is called \emph{bad singular
end}. In the other case we define an index for every \emph{good singular end}.
These are discussed in Section~4. We then derive a Gauss-Bonnet type theorem in
Section~5 for algebraic stationary surfaces (i.e. the Weierstrass data
extend to meromorphic functions/forms on compact Riemann surfaces)
without bad singular ends. This generalizes the Jorge-Meeks formula
in $\mathbb{R}^3$ and provides a cornerstone for constructing
various examples with given global behavior.

Although there are rich examples of minimal surfaces in $\mathbb{R}^3$ locally, it is surprising that under global assumptions like completeness, embeddedness together with the topological type of the underlying surface, usually there are very few examples. This is one of the most attractive features of this research field. Here we just mention three most famous uniqueness results in $\mathbb{R}^3$ as below. \\

\noindent
{\bf Theorem} (Schoen \cite{Schoen}):~
A complete, connected minimal surfaces in $\mathbb{R}^3$ with two embedded ends and
finite total curvature is a catenoid.\\

\noindent
{\bf Theorem} (Lopez-Ros theorem \cite{Ros}):~
Complete, embedded minimal surfaces in $\mathbb{R}^3$ with genus $0$ and
finite total curvature are planes and catenoids.\\

\noindent
{\bf Theorem} (Meeks and Rosenberg \cite{Meeks2}):~
The helicoid is the only non-flat, properly embedded, simply connected,
complete minimal surfaces in $\mathbb{R}^3$.\\

In $\mathbb{R}^4_1$, is the embeddedness condition still a strong restriction on the existence of global examples? The answer is negative.

From Section~6 to Section~8 we generalize
classical minimal surfaces like the catenoid, the helicoid, the Enneper surface, and
Jorge-Meeks' $k$-noids. Each class includes embedded examples in $\mathbb{R}^4_1$.
Thus embeddedness is not so restrictive
for stationary surfaces in $\mathbb{R}^4_1$. In particular, many uniqueness theorems for completely embedded minimal surfaces
in $\mathbb{R}^3$ do not generalize to our case.
As to Schoen's theorem, this is shown by our Example~\ref{exa-graph2} which is a graph over a punctured plane; for Lopez-Ros theorem and Meeks-Rosenberg theorem, this is shown by the existence of embedded Enneper type surfaces (see Example~\ref{exa-enneper} and Proposition~\ref{prop-enneper}).

The reader might think that in such a codimension-2 case it would be easy
to deform any surface and avoid self-intersection.
But one should keep in mind that stationary surfaces are rigid objects whose global behavior is determined by a small piece.
Even though one can deform it locally, there are strong global restrictions such as being spacelike and complete.
Besides these considerations, transversal intersection is still possible
which could not be eliminated by small perturbation.
Thus the existence of many complete and embedded examples in $\mathbb{R}^4_1$ is a non-trivial fact.
It is a very interesting question whether one can establish some
uniqueness theorem under the assumption of embedding
in this new context. See discussions in Section~9 among other open problems.

It is worth pointing out that, at the beginning the classical theory of minimal surfaces motivated the generalization to other manifolds, like a series works on minimal surfaces and CMC surfaces in $S^2\times\mathbb{R}, H^2\times\mathbb{R}$,
and other homogeneous 3-manifolds \cite{Rosenberg, Meeks4, Daniel}; however, at a later stage, this development might provide new insight and tools for the original topics in $\mathbb{R}^3$. One evidence is the recent work by Hoffman, Traizet, and White \cite{Hoffman1, Hoffman2}. In these two preprints, based on the properties of helicoidal minimal surfaces in $S^2\times\mathbb{R}$, they are able to construct helicoidal minimal surfaces in $\mathbb{R}^3$ with prescribed genus. Our hope is that the study of stationary surfaces in $\mathbb{R}^4_1$ might reward us in a similar manner in the future.\\

In the final part of this introduction, we would like to mention some previous works on stationary
surfaces which are also important motivation to our work.
Estudillo and Romero \cite{Romero} considered the
exceptional value problem for the normal directions and established
certain Bernstein type theorems for complete stationary surfaces in
$\mathbb{R}^n_1$. Al\'ias and Palmer \cite{Alias} dealt with
curvature properties of such surfaces in $\mathbb{R}^4_1$.
The Weierstrass type representation in $\mathbb{R}^4_1$ should also be known already.
It seems quite natural to extend from minimal surfaces in $\mathbb{R}^3$
to our case by using the classical method systematically.
What puzzled and surprised us is that nobody did this before
(to the best of our knowledge). The main reasons might be as follows.

First, according to our observation, there do exist richer
phenomenon and new difficulties in $\mathbb{R}^4_1$. The Osserman's
theorem fails, and the singular ends as well as the problem of
solving the equation $\phi=\bar\psi$ are new challenges not so easy to
overcome.

Second, as pointed out at the beginning, the stability property of
stationary surfaces in $\mathbb{R}^4_1$ is bad. People might not
have great interest in considering a surface class without much
physical significance. (Yet under suitable restrictions on the allowed
variations, there are still stability results. See Palmer's work
\cite{Alias-Palmer},\cite{Palmer}.)

Last but not least, the embedding problem becomes more difficult, and in the 4-dimensional case
we lose the geometric intuition and the beautiful pictures which are always so attractive in the 3-space.
For the purpose of visualization, one may take a slice (i.e. the intersection of $\vec{x}(M)$
with a 3-space $x_4=0$ or $x_3=0$), or a projection to such 3-spaces.
But neither of them is satisfactory.\\

\textbf{Acknowledgement}~~
We thank the referee for helpful suggestions which improve our exposition.
This work is supported by the the Project 10771005 and the Project 10901006 of
National Natural Science Foundation of China.

\section{Weierstrass Representation}

Let $\vec{x}:M^2\to \mathbb{R}^4_1$ be an oriented complete spacelike
surface in 4-dimensional Lorentz space
with local complex coordinate $z=u+\mathrm{i}v$ and zero mean curvature $\vec{H}=0$.
The Lorentz inner product $\langle\cdot,\cdot\rangle$ is given by
\[\langle \vec{x},\vec{x}\rangle=x_1^2+x_2^2+x_3^2-x_4^2.\]
The induced Riemannian metric is $\mathrm{d}s^2=\mathrm{e}^{2\omega}|\mathrm{d}z|^2$ on $M$. Hence
\[
\langle \vec{x}_{z},\vec{x}_{z}\rangle=0,~~ \langle
\vec{x}_{z},\vec{x}_{\bar{z}}\rangle =\frac{1}{2}\mathrm{e}^{2\omega}.
\]
Choose null vectors $\vec{y},\vec{y}^*$ in the normal plane
at each point such that
\[
\langle \vec{y},\vec{y}\rangle=\langle \vec{y}^*,\vec{y}^*\rangle=0, ~~ \langle
\vec{y},\vec{y}^*\rangle =1,~~ \mathrm{det}\{\vec{x}_u,\vec{x}_v,\vec{y},\vec{y}^*\}>0~.
\]
Such frames $\{\vec{y},\ \vec{y}^{*}\}$ are
determined up to a scaling
\begin{equation}\label{scaling}
 \{\vec{y},\ \vec{y}^{*}\}\rightarrow
\{\lambda \vec{y},\ \lambda^{-1}\vec{y}^{*}\}
\end{equation}
for some non-zero real-valued function $\lambda$. After projection, we obtain two well-defined maps
\[
[\vec{y}],\ [\vec{y}^* ]: M \rightarrow S^2\cong\{[\vec{v}]\in\mathbb{R}P^3|\langle \vec{v},\vec{v}\rangle=0\}.
\]
The target space is usually called the projective light-cone, which is well-known
 to be homeomorphic to the 2-sphere. By analogy to $\mathbb{R}^3$,
 we call them the \emph{Gauss maps} of the spacelike surface $\vec{x}$ in $\mathbb{R}^{4}_{1}$.

The structure equations are derived under the complex frame $\{\vec{x}_z,\vec{x}_{\bar{z}},\vec{y},\vec{y}^*\}$:
\begin{flushleft}
\begin{equation}\label{xzz}\vec{x}_{zz}=2\omega_{z}~\vec{x}_z+\Omega^*~\vec{y}+\Omega
~\vec{y}^*,\end{equation}
\begin{equation}\label{xzbar}\vec{x}_{z\bar{z}}=0,\end{equation}
\begin{equation}\label{yz-s}\vec{y}_{z}=\alpha ~\vec{y} -2\mathrm{e}^{-2\omega}\Omega ~\vec{x}_{\bar{z}},\end{equation}
\begin{equation}\label{y*z-s}\vec{y}^*_{z}=-\alpha ~\vec{y}^* -2\mathrm{e}^{-2\omega}\Omega^* ~\vec{x}_{\bar{z}}.\end{equation}
\end{flushleft}
Here $\Omega\triangleq\langle \vec{x}_{zz},\vec{y}\rangle, \Omega^*\triangleq\langle \vec{x}_{zz},\vec{y}^*\rangle$
are only quasi-invariants (since they are defined only up to a scaling \eqref{scaling})
 whose geometric meaning is similar to the usual Hopf differential.
 $\alpha \mathrm{d}z\triangleq\langle \vec{y}_z,\vec{y}^*\rangle \mathrm{d}z$ gives the connection $1$-form of
 the normal bundle.

 Note that equation~\eqref{xzbar} means $\vec{x}$ is still a
 vector-valued harmonic function, which is equivalent to the condition $\vec{H}=0$
 (zero mean curvature).

 The integrability equations are:
\begin{gather}
K =-4\mathrm{e}^{-2\omega}\omega_{z\bar{z}}
=-4\mathrm{e}^{-4\omega}(\Omega\overline{\Omega^*}+\bar{\Omega}\Omega^*)~,
\label{Gauss}\\
\Omega_{\bar{z}} -\bar{\alpha}\Omega=\Omega^*_{\bar{z}}+\bar{\alpha}\Omega^*=0~,
\label{Codazzi}\\
K^{\bot} =-2\mathrm{i}\cdot \mathrm{e}^{-2\omega}(\alpha_{\bar{z}}-\bar{\alpha}_{z})
=-4\mathrm{i}\cdot \mathrm{e}^{-4\omega}(\Omega\overline{\Omega^*}-\bar{\Omega}\Omega^*)~.
\label{Ricci}
\end{gather}
Here $K,K^{\bot}$ denote the Gaussian curvature and the normal curvature, respectively.
The equations \eqref{Gauss} and \eqref{Ricci} may be combined into a single formula:
\begin{equation}\label{eq-curvature}
-K+\mathrm{i}K^{\perp}=8\mathrm{e}^{-4\omega}\Omega\overline{\Omega^*}~.
\end{equation}
Together with \eqref{Codazzi}, it follows that
\begin{equation}
\Delta\ln(-K+\mathrm{i}K^{\perp})=-4K+2K^{\perp}\mathrm{i}~,
\end{equation}
where $\Delta\triangleq 4\mathrm{e}^{-2\omega}\frac{\partial^2}{\partial z\partial {\bar z}}$ is the usual Laplacian operator with respect to $\mathrm{d}s^2$. (Similar formulas have appeared in \cite{Alias}.)

The assumption of zero mean curvature implies that the two Gauss maps
$[\vec{y}],\ [\vec{y}^* ]: M \rightarrow S^2$ are conformal, i.e.
\[
\langle \vec{y}_z,\vec{y}_z\rangle=\langle \vec{y}^*_z,\vec{y}^*_z\rangle=0~,
\]
by \eqref{yz-s} \eqref{y*z-s}. In particular, they induce opposite orientations on the
target space $S^2$ according to the observation in \cite{CPWang2008}.
Assume that $[\vec{y}]$ is given by a meromorphic function
$\phi=\phi^1+\mathrm{i}\phi^2:M\rightarrow \mathbb{C}\cup\{\infty\}$,
and $[\vec{y}^*]$ is given by the complex conjugation of a meromorphic function, i.e.
$\bar{\psi}=\psi^1-\mathrm{i}\psi^2:M\rightarrow \mathbb{C}\cup\{\infty\}$.

Since $[\vec{y}]\neq[\vec{y}^* ]$ at any point, we have $\phi\neq\bar{\psi }$ over $M$ and they do not have poles at the same regular point. Denote
\begin{equation}\label{eq-rho}
\phi-\bar{\psi}=\rho \mathrm{e}^{\mathrm{i}\theta},\ \rho>0~.
\end{equation}
Then we can write
\begin{equation}
\vec{y}=\frac{\sqrt{2}}{\rho}\left(\phi^1,\phi^2,\frac{1-|\phi|^2}{2},
\frac{1+|\phi|^2}{2}\right),
\end{equation}
\begin{equation}
\vec{y}^*=-\frac{\sqrt{2}}{\rho}\left(\psi^1,-\psi^2,\frac{1-|\psi|^2}{2},
\frac{1+|\psi|^2}{2}\right)~.
\end{equation}
Since $\phi_{\bar{z}}=\psi_{\bar{z}}=0$, by a direct calculation we have
\begin{equation}\alpha=\langle \vec{y}_z,\vec{y}^*\rangle=\mathrm{i}\theta_z,\end{equation}
\begin{equation}\label{yz}
\vec{y}_z-\alpha \vec{y}=-\frac{1}{\sqrt{2}\rho^2}
\mathrm{e}^{-\mathrm{i}\theta}\phi_z\Big(\bar{\phi}+\bar{\psi}, i(\bar{\phi}-\bar{\psi}),
1-\bar{\phi}\bar{\psi},1+\bar{\phi}\bar{\psi}\Big)~,
\end{equation}
\begin{equation}\label{y*z}
\vec{y}^*_z+\alpha \vec{y}^*=-\frac{1}{\sqrt{2}\rho^2}
\mathrm{e}^{\mathrm{i}\theta}\psi_z\Big(\bar{\phi}+\bar{\psi}, i(\bar{\phi}-\bar{\psi}),
1-\bar{\phi}\bar{\psi},1+\bar{\phi}\bar{\psi}\Big)~.
\end{equation}
Comparing \eqref{yz-s} with \eqref{yz},
we see that the holomorphic vector-valued 1-form $\vec{x}_z$ is  parallel to the meromorphic vector-valued function
\[
\Big(\phi+\psi, -\mathrm{i}(\phi-\psi),1-\phi\psi,1+\phi\psi\Big).
\]
Thus there exists a meromorphic differential $\mathrm{d}h$ such that
\begin{equation}\label{xz}
\vec{x}_z \mathrm{d}z=\Big(\phi+\psi, -\mathrm{i}(\phi-\psi),1-\phi\psi,1+\phi\psi\Big)\mathrm{d}h~.
\end{equation}
Observe that $\mathrm{d}h$ is indeed a holomorphic 1-form,
because it is the arithmetic average of the third and
the fourth components of $\vec{x}_z\mathrm{d}z$ by \eqref{xz}.
Although it is not necessarily an exact 1-form, according to the convention in this research field of minimal surfaces we still denote it as $\mathrm{d}h$ and call it the \emph{height differential} (see footnote~8 in \cite{Meeks3}).

Finally, we derive a Weierstrass
representation of stationary surface $\vec{x} : M\rightarrow \mathbb{R}^4_1:$
\begin{equation}\label{x}
\vec{x}= 2~\mathrm{Re}\int \Big(\phi+\psi, -\mathrm{i}(\phi-\psi),1-\phi\psi,1+\phi\psi\Big)\mathrm{d}h.
\end{equation}
in terms of two meromorphic functions $\phi,\psi$ and
a holomorphic $1$-form $\mathrm{d}h$.

\begin{remark}\label{rem-Wrepre}
When $\phi\equiv -1/\psi$, the above formula \eqref{x} yields a minimal
surface in $\mathbb{R}^3$ and we recover the classical Weierstrass representation.
When $\phi\equiv 1/\psi$, one obtains the Weierstrass representation for a maximal
surface in $\mathbb{R}^3_1$.
When $\phi$ or $\psi$ is constant, without loss of generality
(see Remark~\ref{rem-trans} and \eqref{trans}) we may assume
$\psi\equiv 0$. After integration we get $x_3-x_4=constant$, hence $\vec{x}$ is
a zero mean curvature surface in a 3-space $\mathbb{R}^3_0$
(which is endowed with a degenerate inner product).
Thus all these classical cases are included as special
cases of our generalized Weierstrass type representation.
\end{remark}
\begin{definition}
Similar to the case of minimal surfaces in
$\mathbb{R}^3$, we call $\phi,\psi$ \emph{the Gauss maps} of $\vec{x}$, and
$\mathrm{d}h$ \emph{the height differential}.
\end{definition}
\begin{remark}\label{rem-trans}
It is important to consider the effect of a Lorentz isometry of $\mathbb{R}^4_1$
on the Weierstrass data, which will be frequently used to simplify
the construction of examples, and to reduce the general situation to
some special cases. Observe that the induced action on the projective light-cone
is nothing but a M\"obius transformation on $S^2$, or just
a fractional linear transformation on $\mathbb{C}P^1=\mathbb{C}\cup\{\infty\}$
given by $A=\left(\begin{smallmatrix}a & b \\ c & d\end{smallmatrix}\right)$
 with $a,b,c,d \in \mathbb{C},~ad-bc=1$.
The Gauss maps $\phi,\psi$ and the height differential $\mathrm{d}h$ transform as below:
\begin{equation}\label{trans}
\phi\Rightarrow
\frac{a\phi+b}{c\phi+d}~,~~
 \psi\Rightarrow \frac{\bar{a}\psi+\bar{b}}{\bar{c}\psi+\bar{d}}~,~~
 \mathrm{d}h\Rightarrow (c\phi+d)(\bar{c}\psi+\bar{d})\mathrm{d}h~.
 \end{equation}
 \end{remark}

\begin{theorem}\label{thm-period}
Given a holomorphic $1$-form $\mathrm{d}h$ and two meromorphic functions $\phi,\psi:M\rightarrow \mathbb{C}\cup\{\infty\}$
globally defined on a Riemann surface $M$, assume that they satisfy the regularity conditions
(1),(2) and the period condition (3) as below:

(1) $\phi\neq\bar{\psi}$ on $M$ and their poles do not coincide;

(2) The zeros of $\mathrm{d}h$ coincide with the poles of $\phi$ or $\psi$
with the same order;

(3) Along any closed path the periods satisfy
\begin{equation}\label{eq-period}
\oint_\gamma \phi \mathrm{d}h
=-\overline{\oint_\gamma \psi \mathrm{d}h }, ~~
\mathrm{Re}\oint_\gamma \mathrm{d}h=0=\mathrm{Re}\oint_\gamma \phi\psi \mathrm{d}h~.
\end{equation}
Then
\eqref{x} defines a stationary surface $\vec{x}:M\rightarrow \mathbb{R}^4_1$.

Conversely, every stationary surface $\vec{x}:M\rightarrow \mathbb{R}^4_1$ can be
represented as \eqref{x} in terms of such $\phi,\ \psi$ and $\mathrm{d}h$ over a (necessarily non-compact) Riemann surface $M$.
\end{theorem}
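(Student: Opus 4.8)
The plan is to reduce both directions to two algebraic identities for the $\mathbb{C}^4$-valued vector
\[
W\triangleq\bigl(\phi+\psi,\,-\mathrm{i}(\phi-\psi),\,1-\phi\psi,\,1+\phi\psi\bigr),
\]
so that $\vec{x}_z\,\mathrm{d}z=W\,\mathrm{d}h$ by \eqref{xz}. With respect to the Lorentz product a direct expansion yields
\[
\langle W,W\rangle=0,\qquad \langle W,\overline{W}\rangle=2\,|\phi-\bar\psi|^2=2\rho^2,
\]
the last equality by \eqref{eq-rho}. For the well-definedness of \eqref{x} I would first note that $\vec{x}$ descends to $M$ iff $\mathrm{Re}\oint_\gamma W\,\mathrm{d}h=0$ on every closed loop. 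Writing $A=\oint_\gamma\phi\,\mathrm{d}h$ and $B=\oint_\gamma\psi\,\mathrm{d}h$, the first relation in \eqref{eq-period} is $A=-\bar B$, which makes $\oint_\gamma(\phi+\psi)\,\mathrm{d}h=B-\bar B$ and $-\mathrm{i}\oint_\gamma(\phi-\psi)\,\mathrm{d}h=\mathrm{i}(B+\bar B)$ both purely imaginary, killing the real parts of the first two components; the remaining two relations in \eqref{eq-period} kill the real parts of $\oint_\gamma(1\mp\phi\psi)\,\mathrm{d}h$. Hence condition 3) is exactly the period (closing) condition.

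Next I would establish that \eqref{x} is a conformal, spacelike, harmonic immersion. Conformality $\langle\vec{x}_z,\vec{x}_z\rangle=0$ is immediate from $\langle W,W\rangle=0$, while $\langle\vec{x}_z,\vec{x}_{\bar z}\rangle=|h'|^2\langle W,\overline{W}\rangle$ gives the induced metric $\mathrm{e}^{2\omega}=4|h'|^2\rho^2$. Condition 1), i.e. $\phi\neq\bar\psi$ so that $\rho>0$, keeps this metric positive; condition 2) is precisely what forces it to be finite and nonvanishing. Indeed, at a pole of $\phi$ (resp. $\psi$) of order $k$, with the other function finite by 1), the factor $\rho^2$ blows up like $|z|^{-2k}$ and is cancelled exactly by the order-$k$ zero of $\mathrm{d}h$, so $W\,\mathrm{d}h$ extends holomorphically with neither pole nor zero there; the same-order matching, read in the opposite direction, also forbids any extra zero of $\mathrm{d}h$ that would produce a branch point. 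Finally $\vec{x}_{z\bar z}=0$ because $W\,\mathrm{d}h$ is holomorphic, so $\vec{x}$ is harmonic, which together with conformality is \eqref{xzbar}, i.e. $\vec{H}=0$.

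For the converse, the computation leading to \eqref{xz} already manufactures the data: the two lightlike Gauss maps $[\vec{y}],[\vec{y}^*]$ give the meromorphic $\phi,\psi$, and \eqref{yz-s} together with \eqref{yz} produces a holomorphic height differential $\mathrm{d}h$ with $\vec{x}_z\,\mathrm{d}z=W\,\mathrm{d}h$. I would then read the three conditions off the surface. Condition 1) is the standing fact $[\vec{y}]\neq[\vec{y}^*]$ plus the non-coincidence of poles recorded before \eqref{eq-rho}; condition 2) follows because the induced metric $4|h'|^2\rho^2$ of an immersion is finite and nonzero, which forces the zeros of $\mathrm{d}h$ to match the poles of $\phi,\psi$ in order; condition 3) is automatic, since $\vec{x}$ is globally single valued so the periods of $W\,\mathrm{d}h$ are purely imaginary, which is \eqref{eq-period}. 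That $M$ is necessarily noncompact follows because the four coordinate functions are harmonic and would be constant on a closed $M$, contradicting the immersion property.

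The routine identities above cause no trouble; the step that deserves real care is condition 2) in the forward direction, i.e. the local model at a pole of $\phi$ or $\psi$. There one must check simultaneously that $W\,\mathrm{d}h$ acquires no pole (so $\vec{x}$ remains a smooth conformal harmonic map), that $\mathrm{e}^{2\omega}$ stays strictly positive and finite (so the point is an honest interior regular point rather than a branch point or puncture), and that the order bookkeeping is sharp in both directions. This is also where the interplay of the two lightlike directions is most delicate, and it is the natural link to the analysis of ends and the Gauss--Bonnet formula of Sections 4--5.
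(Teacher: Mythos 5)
Your proposal is correct and is exactly the standard Weierstrass-representation argument the paper has in mind: the authors explicitly omit the proof as ``easy and similar to the known case of minimal surfaces in $\mathbb{R}^3$'' and only record the key identity $\langle \vec{x}_z,\vec{x}_{\bar z}\rangle\,|\mathrm{d}z|^2=2|\phi-\bar\psi|^2|\mathrm{d}h|^2$ (their \eqref{eq-metric}) to explain conditions 1) and 2), which is precisely your computation $\langle W,\overline{W}\rangle=2\rho^2$ together with the pole/zero cancellation at poles of $\phi$ or $\psi$. Your treatment of the period condition and of the converse likewise matches what the paper's Section~2 derivation implicitly supplies, so there is nothing to correct.
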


Instead of proving the theorem, which is easy and similar to the case of minimal surfaces in $\mathbb{R}^3$, we provide a detailed explanation of conditions (1), (2).
For a stationary surface constructed by \eqref{xz}, we find the metric
\begin{equation}\label{eq-metric}
\mathrm{e}^{2\omega}|\mathrm{d}z|^2
=\langle \vec{x}_z,\vec{x}_{\bar{z}}\rangle|\mathrm{d}z|^2
=2|\phi-\bar{\psi}|^2|\mathrm{d}h|^2~.
\end{equation}
Thus $\phi\neq\bar{\psi}$ when the values $\phi(z),\psi(z)$ are finite.
The exceptional case is at the poles of $\phi$ or $\psi$,
which is equivalent to the previous situation, since
we could take any point on $S^2$ to be the north pole up a rotation,
or equivalently, take any point to be $\infty\in \mathbb{C}P^1$
up to a linear fractional transformation as in \eqref{trans}.
So $\phi,\psi$ would not have poles at the same point on $M$.
This explains the condition (1). Condition (2) is self-evident by \eqref{eq-metric}.
(The geometric interpretation for $\phi\neq\bar{\psi}$ is that
$[\vec{y}],[\vec{y}^*]$ are distinct at each regular point as pointed out before \eqref{eq-rho}.)

In $\mathbb{R}^3$, condition (1) is satisfied automatically since
we have $\phi=-\frac{1}{\psi}$ which is never equal to $\bar\psi$.
The two lightlike normal vectors corresponding to them are given by
$(\vec{n},\pm 1)\in \mathbb{R}^4_1$ where $\vec{n}$ is
the unit normal vector in $\mathbb{R}^3$.

By contrast, a typical global phenomenon for maximal surfaces in $\mathbb{R}^3_1$ is the existence of singularities \cite{Kobayashi, Umehara}.
The best-known example is \emph{the catenoid in $\mathbb{R}^3_1$}
given by \eqref{x} with
\begin{equation}\label{eq-catenoid2}
\phi=z,~~\psi=\frac{1}{z},~~\mathrm{d}h=\frac{\mathrm{d}z}{z},~~
M=\mathbb{C}\backslash\{0\}.
\end{equation}
By integration we obtain a rotational
maximal surface $\vec{x}=(x_1,x_2,x_3,x_4)$ with
\[
\sqrt{x_1^2+x_2^2}=\sinh(x_4),~~x_3=0.
\]
Compared to the catenoid in $\mathbb{R}^3$,
this example is peculiar in that it has a cone-like singularity,
which corresponds to the locus $|z|=1$
where $\phi=\bar\psi$.\\

Next let us express the geometrical quantities of stationary surface
$\vec{x}:M\rightarrow \mathbb{R}^4_1$ in terms of the Weierstrass data
$\phi,\psi$ and $\mathrm{d}h=h'(z)\mathrm{d}z$.
By \eqref{yz-s}, \eqref{y*z-s}, \eqref{yz}, \eqref{y*z},
\eqref{xz}, and \eqref{eq-metric} we obtain
\begin{equation}
\Omega=\frac{1}{\sqrt{2}}\mathrm{e}^{-\mathrm{i}\theta}h'(z)\phi_z,\
\Omega^*=\frac{1}{\sqrt{2}}\mathrm{e}^{\mathrm{i}\theta}h'(z)\psi_z~.
\end{equation}
Together with \eqref{eq-curvature} we have
\begin{equation}
-K+\mathrm{i}K^{\perp}
=4\mathrm{e}^{-4\omega}\mathrm{e}^{-2\mathrm{i}\theta}|h'(z)|^2\phi_z\bar{\psi}_{\bar{z}}
=4\mathrm{e}^{-2\omega}\frac{\phi_z\bar{\psi}_{\bar{z}}}
{(\phi-\bar{\psi})^2}
=\Delta\ln(\phi-\bar\psi)~.
\end{equation}
So we derive an extremely important formula for the total Gaussian and
normal curvature over a compact stationary surface $M$ with boundary $\partial M$:
\begin{equation}
\begin{split}
\int_M(-K+\mathrm{i}K^{\perp})\mathrm{d}M&=
\int_M \frac{4\phi_z\bar{\psi}_{\bar{z}}}{(\phi-\bar{\psi})^2}
\mathrm{d}u\wedge \mathrm{d}v =2\mathrm{i}\int_M
\frac{\phi_z\bar{\psi}_{\bar{z}}}{(\phi-\bar{\psi})^2}
\mathrm{d}z\wedge \mathrm{d}\bar{z}
\\
&=-2\mathrm{i}\int_{\partial M} \frac{\phi_z}{\phi-\bar{\psi}} \mathrm{d}z
=-2\mathrm{i}\int_{\partial M}
\frac{\bar{\psi}_{\bar{z}}}{\phi-\bar{\psi}}\mathrm{d}\bar{z}.
\label{eq-totalcurvature}
\end{split}
\end{equation}

\begin{remark}\label{rem-Ksign}
It is easy to see that $K^{\bot}\equiv 0$ when $\vec{x}$ is contained
in a 3-dimensional subspace. By Remark~\ref{rem-Wrepre} we deduce that
\begin{itemize}
\item $K\le 0$ for minimal surfaces in $\mathbb{R}^3$;
\item $K\ge 0$ for maximal surfaces in $\mathbb{R}^3_1$;
\item $K\equiv 0$ for zero mean curvature surfaces in $\mathbb{R}^3_0$.
\end{itemize}
These facts are well-known. For more on curvature properties
of complete stationary surfaces, see \cite{Alias}.
We emphasize that in general the Gaussian curvature $K$
does not have a fixed sign.
Hence the improper integral $\int_M K\mathrm{d}M$ is meaningful
only when it is \emph{absolutely convergent}.
\end{remark}
\begin{remark}\label{rem-totalK}
The complex integral $\int_M(-K+\mathrm{i}K^{\perp})\mathrm{d}M$ is very important.
We say that $\vec{x}:M\to \mathbb{R}^4_1$ has \emph{finite total curvature} if
this integral converges absolutely, i.e.,
\[\int_M|-K+\mathrm{i}K^{\perp}|\mathrm{d}M<+\infty\]
This implies finite total Gaussian curvature $\int_M|K|\mathrm{d}M<+\infty$.
But we do not know whether the converse is true.
\end{remark}
\medskip
\noindent \textbf{Convention:} In this paper, we always assume that
neither of $\phi,\psi$ is a constant unless it is stated otherwise.
According to Remark~\ref{rem-Wrepre}, that means we have ruled out
the trivial case of stationary surfaces in $\mathbb{R}^3_0$.
We always consider oriented stationary surfaces. The theory and examples of non-orientable stationary surfaces are treated in another paper \cite{Ma-4pi}.

\section{Finite total curvature and essential singularities}

Recall that a significant class of minimal surfaces in $\mathbb{R}^3$
is those complete ones with finite total Gaussian curvature, i.e.,
\[
\int_M|K|\mathrm{d}M=-\int_M K\mathrm{d}M<+\infty.
\]
The importance of this condition relies on the following classical result.
\begin{theorem} Let $(M,\mathrm{d}s^2)$ be a non-compact surface with a complete metric. Suppose $\int_M|K|\mathrm{d}M<+\infty$. Then we have the following conclusion.

(1) (Huber\cite{Huber}) There is a compact Riemann surface
$\overline{M}$ such that $M$ as a Riemann surface is conformal to $\overline{M}\backslash\{p_1,p_2,\cdots,p_r\}$.

(2) (Osserman\cite{Osser}) When $M$ is a minimal surface in $\mathbb{R}^3$ with the induced metric $\mathrm{d}s^2$, the Gauss map $G=\phi=-1/\psi$ and the
height differential $\mathrm{d}h$ extend to each end $p_j$ analytically.

(3) (Jorge and Meeks \cite{Jor-Meeks}) As in (1) and (2), suppose minimal surface $M\to \mathbb{R}^3$ has $r$ ends and $\overline{M}$ is the compactification with genus $g$. The total curvature is related with these topological invariants via the Jorge-Meeks formula:
\begin{equation}\label{eq-jorgemeeks}
\int_M
K\mathrm{d}M=2\pi\left(2-2g-r-\sum_{j=1}^r d_j\right)~,
\end{equation}
Here $d_j+1$ equals to the highest order of the pole of $\vec{x}_z \mathrm{d}z$ at $p_j$,
and $d_j$ is called \emph{the multiplicity at the end $p_j$}.
\end{theorem}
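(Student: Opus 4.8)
The statement collects three logically independent classical theorems, so the plan is to prove them in the order (1)$\to$(2)$\to$(3), each feeding the next: Huber's theorem supplies the compactification $\overline M$ and a conformal punctured-disk model at every end, Osserman's theorem upgrades the Weierstrass data to genuinely meromorphic objects on $\overline M$, and Jorge-Meeks is then a Gauss-Bonnet computation on the compactified surface. For part (1) the two things to establish are that $M$ has finite topological type and that each annular end is conformally a punctured disk $\{0<|w|<1\}$. The first is a consequence of the Cohn-Vossen inequality: absolute integrability of $K$ bounds $\int_M K\,\mathrm dM$ from below, which bounds the Euler characteristic and hence pins down a finite genus $g$ and a finite number $r$ of ends. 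The second, genuinely analytic, point is to exclude the half-cylinder conformal type for an end; one works in an isothermal parameter, uses the integrability of $K$ to show that the conformal factor grows at most logarithmically, and then invokes the classification of conformal types of annuli. I expect this to be the hardest step, since it is where the potential-theoretic core of Huber's original argument enters; everything afterward is comparatively formal.

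For part (2) I would use that on a minimal surface in $\mathbb{R}^3$ the Gauss map is $G=\phi=-1/\psi$ (Remark~\ref{rem-Wrepre}) and that, specializing the curvature formula around \eqref{eq-curvature}, the curvature area element $|K|\,\mathrm dM$ is exactly the pullback $G^{*}\omega_{S^2}$ of the spherical area form. Thus finite total curvature says precisely that $G$, read near a puncture $p_j$ as a holomorphic map from $\{0<|w|<1\}$ into $\mathbb{C}P^1$, sweeps out finite spherical area counted with multiplicity. A holomorphic map with an essential singularity at $p_j$ would, by Casorati-Weierstrass (or Picard), cover almost all of $S^2$ infinitely often near $p_j$ and so accumulate infinite image area; this contradiction forces $G$ to extend meromorphically across $p_j$. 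Once $G$ is meromorphic at $p_j$, the conformal factor in the metric \eqref{eq-metric} has controlled, at most logarithmic, growth, so completeness of the metric at $p_j$ forces $\mathrm dh$ to have a genuine pole there (regularity of $\mathrm dh$ would make the end a finite distance away), while the regularity requirement recorded as condition 2) of Theorem~\ref{thm-period} pins down the pole order. Hence $\mathrm dh$ extends meromorphically as well.

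For part (3) I would run the Gauss-Bonnet theorem on the compact surface with boundary $M_t=\overline M\setminus\bigcup_{j=1}^r D_j(t)$ obtained by deleting small conformal disks $D_j(t)$ about the punctures, and let $t\to0$: the interior term tends to $\int_M K\,\mathrm dM$, and $\chi(M_t)=2-2g-r$, so the whole content is the limit of the boundary geodesic-curvature integrals. Here the meromorphic extension from part (2) is decisive: since $\langle\vec x_z,\vec x_{\bar z}\rangle=\tfrac12\mathrm{e}^{2\omega}$ and $\vec x_z\,\mathrm dz$ has a pole of order $d_j+1$ at $p_j$, the conformal factor satisfies $\omega\sim-(d_j+1)\log|w|$ in the uniformizer $w$, and the standard formula for the geodesic curvature of a small coordinate circle in a conformal metric then yields $\lim_{t\to0}\oint_{\partial D_j(t)}k_g\,\mathrm ds=2\pi d_j$. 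Summing over the $r$ ends gives $\int_M K\,\mathrm dM+2\pi\sum_j d_j=2\pi(2-2g-r)$, which is \eqref{eq-jorgemeeks}. Equivalently one may note $\int_M K\,\mathrm dM=-4\pi\deg\tilde G$ for the extended Gauss map $\tilde G$ and compute its degree from the divisors of $G$ and $\mathrm dh$ at the ends; either way the one nontrivial input is the asymptotic analysis at the ends producing the clean value $2\pi d_j$, and this, together with Huber's theorem, is where I would concentrate the effort. One can also cross-check the boundary bookkeeping against the contour formula \eqref{eq-totalcurvature} specialized to $\phi=G,\ \psi=-1/G$.
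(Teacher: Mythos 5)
The paper offers no proof of this theorem: it is quoted as classical background, attributed to Huber, Osserman, and Jorge--Meeks, and is used only as motivation (indeed Section~3 is devoted to showing that part (2) \emph{fails} for stationary surfaces in $\mathbb{R}^4_1$). So there is no in-paper argument to compare yours against; what follows assesses your outline on its own terms. Your decomposition is the standard one and is essentially sound: Cohn--Vossen plus the conformal-type analysis of annular ends for (1); the Picard/area argument $\int_M|K|\,\mathrm{d}M=\int_M G^{*}\omega_{S^2}=\int_{S^2}\#G^{-1}(q)\,\mathrm{d}\omega(q)$ for the meromorphic extension of $G$ in (2); and Gauss--Bonnet on $\overline M$ minus shrinking coordinate disks, with boundary terms read off from $\omega\sim-(d_j+1)\log|w|$, for (3).

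Two steps are genuinely under-argued. First, in (2) your completeness argument only excludes $\mathrm{d}h$ being \emph{regular} at $p_j$; it presupposes that $\mathrm{d}h$ already extends meromorphically and merely decides pole versus no pole. The substantive point in Osserman's proof is ruling out an \emph{essential singularity} of $\mathrm{d}h$ (equivalently of the components of $\vec x_z\,\mathrm{d}z$), which needs a separate argument: once $G$ is known to be meromorphic, one analyzes $\log$ of the conformal factor, which is harmonic up to the controlled contribution of $G$, and shows that completeness forces its singular part at the puncture to be a multiple of $\log|w|$, i.e.\ a pole. Second, in (3) the limit $\oint_{\partial D_j(t)}k_g\,\mathrm{d}s\to 2\pi d_j$ requires not just the pointwise asymptotic of $\omega$ but the derivative estimate $t\,\partial_r\omega\to-(d_j+1)$; this does follow from the Laurent expansion of $\vec x_z$ at $p_j$ together with the observation that the leading coefficient vector $\vec c$ satisfies $\sum_k|c_k|^2>0$ (automatic in $\mathbb{R}^3$ since $\sum_k c_k^2=0$ and $\vec c\ne0$), but it should be stated, since it is exactly this nondegeneracy that breaks down at the ``bad singular ends'' the paper later introduces in $\mathbb{R}^4_1$. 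Neither issue is a wrong turn, but both are steps a complete proof must supply.
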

Huber's conclusion (1) means that
\[
\text{finite total curvature} ~~\Longrightarrow~~ \text{finite topology},
\]
which is a purely intrinsic result. In particular, this is valid also for stationary surfaces in $\mathbb{R}^4_1$.

As to the extrinsic geometry of a minimal surface $M^2\to \mathbb{R}^3$ with finite total curvature, Osserman's result (2) shows that we have a nice control over its behavior at infinity. To our surprise, this is no longer true in $\mathbb{R}^4_1$. In particular we have counterexamples given below:

\begin{example}
[$M_{k,a}$ with essential singularities and finite total curvature]\label{exa-essen}
The Weierstrass data is given by
\begin{equation}\label{ex-essential}
M_{k,a}\cong \mathbb{C}-\{0\},~\phi=z^k \mathrm{e}^{az},~\psi=-\frac{\mathrm{e}^{az}}{z^k},~\mathrm{d}h=\mathrm{e}^{-az}\mathrm{d}z~.
\end{equation}
where integer $k$ and real number $a$ satisfy
$k\ge 2, 0< a<\frac{\pi}{2}$.
\end{example}
\begin{proposition}\label{prop-essential}
Each of the stationary surfaces $M_{k,a}$ in Examples~\ref{exa-essen} is a regular, complete stationary surface with two ends at $z=0,\infty$ satisfying the period conditions. Moreover the total curvature converges absolutely with
\begin{equation}\label{eq-essential}
\int_M
K\mathrm{d}M=-4\pi k~, ~~\int_M K^{\perp}\mathrm{d}M =0~.
\end{equation}
\end{proposition}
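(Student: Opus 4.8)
The plan is to verify the hypotheses of Theorem~\ref{thm-period} and then compute the total curvature through the boundary integral \eqref{eq-totalcurvature}, exploiting the systematic cancellation of the exponential factors in all the relevant products of the Weierstrass data. Writing $z=re^{\mathrm{i}\theta}$ and $\beta\triangleq ar\sin\theta$, a direct computation gives $\phi-\bar\psi=e^{ar\cos\theta}e^{\mathrm{i}k\theta}(r^k e^{\mathrm{i}\beta}+r^{-k}e^{-\mathrm{i}\beta})$, so that $|\phi-\bar\psi|^2=e^{2ar\cos\theta}(r^{2k}+r^{-2k}+2\cos 2\beta)$. Since $|\mathrm{d}h|^2=e^{-2ar\cos\theta}|\mathrm{d}z|^2$, the troublesome exponentials cancel in \eqref{eq-metric} and the induced metric becomes simply $\mathrm{d}s^2=2(r^{2k}+r^{-2k}+2\cos 2\beta)|\mathrm{d}z|^2$.

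First I would establish regularity. By $r^{2k}+r^{-2k}\ge 2$ the conformal factor is nonnegative, and it vanishes only if $r=1$ and $\cos 2\beta=-1$, i.e. $a\sin\theta=\pm\tfrac{\pi}{2}$; but $|a\sin\theta|\le a<\tfrac{\pi}{2}$, so this never happens. Hence $\phi\ne\bar\psi$ everywhere and $\mathrm{d}s^2$ is a genuine Riemannian metric --- this is precisely where the hypothesis $0<a<\tfrac{\pi}{2}$ enters. As $\phi,\psi$ have no poles and $\mathrm{d}h$ no zeros on $\mathbb{C}\setminus\{0\}$, conditions 1) and 2) of Theorem~\ref{thm-period} hold. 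Completeness follows from the metric: near $z=0$ the factor behaves like $2r^{-2k}$ and near $z=\infty$ like $2r^{2k}$, so the radial length diverges at both ends; since $M=S^2\setminus\{0,\infty\}$ this also exhibits the two ends (note that the factor $e^{az}$ gives both Gauss maps an essential singularity at $z=\infty$, which is the whole point of the example).

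Next I would check the period condition 3). Because $\mathbb{C}\setminus\{0\}$ has a single homotopy generator, it suffices to integrate over one circle $|z|=\mathrm{const}$. The exponentials again cancel: $\phi\,\mathrm{d}h=z^k\mathrm{d}z$, $\psi\,\mathrm{d}h=-z^{-k}\mathrm{d}z$, $\mathrm{d}h=e^{-az}\mathrm{d}z$ and $\phi\psi\,\mathrm{d}h=-e^{az}\mathrm{d}z$. For $k\ge 2$ none of these has a residue (and the last two integrands are entire), so every contour integral in \eqref{eq-period} vanishes, verifying the periods.

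Finally, for the total curvature I would apply \eqref{eq-totalcurvature}, exhausting $M$ by annuli $\{\epsilon\le|z|\le R\}$ and evaluating $-2\mathrm{i}\oint\frac{\phi_z}{\phi-\bar\psi}\mathrm{d}z$ on the two boundary circles. Using $\frac{\phi_z}{\phi}=\frac{k}{z}+a$ and $\frac{\bar\psi}{\phi}=-e^{-2\mathrm{i}\beta}r^{-2k}$ one gets $\frac{\phi_z}{\phi-\bar\psi}=\left(\frac{k}{z}+a\right)\left(1+e^{-2\mathrm{i}\beta}r^{-2k}\right)^{-1}$; on $|z|=R$ this tends to $\frac{k}{z}+a$ with an error of size $O(R^{1-2k})$, so the outer integral $\to 2\pi\mathrm{i}k$, while on $|z|=\epsilon$ the integrand is $O(\epsilon^{2k-1})$ and the inner integral $\to 0$. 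Hence $\int_M(-K+\mathrm{i}K^\perp)\mathrm{d}M=-2\mathrm{i}(2\pi\mathrm{i}k)=4\pi k$, and separating real and imaginary parts yields \eqref{eq-essential}. For absolute convergence the same cancellation gives $|-K+\mathrm{i}K^\perp|\mathrm{d}M=\frac{4|k+az|\,|az-k|}{r^2(r^{2k}+r^{-2k}+2\cos 2\beta)}\,r\,\mathrm{d}r\,\mathrm{d}\theta$, whose integrand decays like $r^{2k-1}$ as $r\to 0$ and like $r^{1-2k}$ as $r\to\infty$, both integrable --- here the end $z=\infty$ forces the standing assumption $k\ge 2$. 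The main obstacle is the non-holomorphic nature of $\frac{\phi_z}{\phi-\bar\psi}$, which rules out a naive residue computation and makes the careful asymptotic estimate of the boundary integrals (together with the passage to the limit, justified by the absolute convergence just established) the technical heart of the argument.
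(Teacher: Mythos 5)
Your proposal is correct and takes essentially the same route as the paper's own proof: regularity via showing $\phi\ne\bar\psi$ using $0<a<\tfrac{\pi}{2}$ (your metric-conformal-factor phrasing is equivalent to the paper's reduction to $\mathrm{e}^{2a\mathrm{i}\sin\theta}=-1$ on $|z|=1$), periods via the residue-free forms $z^k\mathrm{d}z$, $-z^{-k}\mathrm{d}z$, $\mathrm{e}^{\pm az}\mathrm{d}z$, completeness and absolute convergence from the same asymptotics $r^{\pm 2k}$, and the exact value by Stokes on annuli with the boundary integrand $\bigl(\tfrac{k}{z}+a\bigr)\bigl(1+r^{-2k}\mathrm{e}^{-2\mathrm{i}\beta}\bigr)^{-1}$. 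One minor remark: your sign bookkeeping, giving $\int_M(-K+\mathrm{i}K^{\perp})\mathrm{d}M=-2\mathrm{i}\cdot 2\pi\mathrm{i}k=4\pi k$ and hence $\int_M K\,\mathrm{d}M=-4\pi k$, is the one consistent with \eqref{eq-totalcurvature}; the paper's displayed decomposition $+2\mathrm{i}\oint_{|z|=R}-2\mathrm{i}\oint_{|z|=r}$ carries a sign slip relative to \eqref{eq-totalcurvature} while stating the same final answer.
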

\begin{proof}
First let us verify the regularity condition. The zero of $\phi$
and the pole of $\psi$ coincide at $z=0$ with the same multiplicity. At $z=\infty$, $\phi$ and $\psi$
have essential singularities. Except these two points,
$\phi,\psi$ are holomorphic nonzero functions, and we need
only to show $\phi\ne\bar{\psi}$ on $\mathbb{C}-\{0\}$.
Suppose $\phi(z)=\overline{\psi(z)}$ for some $z\ne 0$.
Using \eqref{ex-essential} and comparing the norms we see $|z|=1$.
Let $z=\mathrm{e}^{\mathrm{i}\theta}$ for some $\theta\in[0,2\pi)$.
Then the equation $\phi(z)=\overline{\psi(z)}$ is reduced to
\[\mathrm{e}^{2a\mathrm{i}\sin\theta}=-1,\]
which has no real solutions for $\theta$ when $0<a<\frac{\pi}{2}.$
This proves our first assertion.

Second, the period conditions are obviously satisfied, since any of
\[\mathrm{d}h=\mathrm{e}^{-az}\mathrm{d}z,~~\phi\psi \mathrm{d}h=-\mathrm{e}^{az}\mathrm{d}z,~~\phi \mathrm{d}h=z^k \mathrm{d}z,~~\psi \mathrm{d}h=-z^{-k}\mathrm{d}z\]
have no residues (note that $k\ge 2$).

Third, the metric of $M_{k,a}$ is complete by the following simple estimation:
\begin{equation}\label{eq-essen-metric}
\mathrm{d}s=|\phi-\bar\psi||\mathrm{d}h|=|z^k+(\bar{z})^{-k}\mathrm{e}^{a(\bar{z}-z)}||\mathrm{d}z|\sim
\left\{\begin{array}{rl}
|z|^{-k}|\mathrm{d}z|,& (z\to 0);\\
|z|^k|\mathrm{d}z|,& (z\to\infty).
\end{array}\right.
\end{equation}

Finally we estimate the absolute total curvature:
\begin{align*}
\int_{\mathbb{C}}|-K+\mathrm{i}K^{\perp}|\mathrm{d}M&=2\mathrm{i}\int_{\mathbb{C}}
\frac{|\phi_z\bar{\psi}_{\bar{z}}|}{|\phi-\bar{\psi}|^2}
\mathrm{d}z\wedge \mathrm{d}\bar{z}\\
&=2\mathrm{i}\int_{\mathbb{C}}
\frac{|az+k||k-az|}{\left|z^{k+1}+
z\bar{z}^{-k}\mathrm{e}^{a(\bar{z}-z)}\right|^2}
\mathrm{d}z\wedge \mathrm{d}\bar{z}.
\end{align*}
When $r=|z|\to 0$ the integrand is almost the same as $k^2 r^{2k-1}\mathrm{d}r \mathrm{d}\theta$
with respect to the polar coordinate; when $r=|z|\to\infty$ we have a similar
estimation as $r^{1-2k}\mathrm{d}r \mathrm{d}\theta$. Since $k\ge 2$, each of these two improper integral
converges when $r\to 0$ or $r\to \infty$, respectively.
Thus the total curvature integral converges absolutely.

To find the exact value of the integration, we approximate $\mathbb{C}$ by domains
$A_{r,R}\triangleq\{0<r\le |z|\le R\}$. Using Stokes theorem for $A_{r,R}$
we obtain
\begin{align*}
\int_{A_{r,R}}(-K+\mathrm{i}K^{\perp})\mathrm{d}M
&=2\mathrm{i}\int_{A_{r,R}}
\frac{\phi_z\bar{\psi}_{\bar{z}}}{(\phi-\bar{\psi})^2}
\mathrm{d}z\wedge \mathrm{d}\bar{z}\\
&=-2\mathrm{i}\oint_{|z|=R}
\frac{\phi_z}{\phi-\bar{\psi}}\mathrm{d}z+2\mathrm{i}\oint_{|z|=r}
\frac{\phi_z}{\phi-\bar{\psi}}\mathrm{d}z~,
\end{align*}
where
\[\frac{\phi_z}{\phi-\bar{\psi}}=\frac{a+k/z}{1+\mathrm{e}^{a(\bar{z}-z)}/|z|^{2k}}~.
\]
When $R\to\infty$ the first contour integral converges to $-2\mathrm{i}\oint_{|z|=R}\frac{k}{z}\mathrm{d}z=4\pi k$. When $r\to 0$ the second contour integral converges to $0$. This completes the proof to Proposition~\ref{prop-essential} as well as the formula \eqref{eq-essential}.
\end{proof}

We can construct similar examples as below. The reader may compare
this to the generalized catenoid in Section~7 and the generalized
Enneper-type surfaces in Section~8.
\begin{example}
[The Enneper surface $E_{k,a}$ with essential singularities]
\label{exa-essen2}:
\[
E_{k,a}\cong \mathbb{C},~\phi=z^k \mathrm{e}^{az},~\psi=-\frac{\mathrm{e}^{az}}{z^k},
~\mathrm{d}h=z^k \mathrm{e}^{-az}\mathrm{d}z~.
~~(k\ge 2,~0<a<\pi/2)
\]
\end{example}
\begin{example}
[The catenoid $C_{k,a}$ with essential singularities]
\label{exa-essen3}:
\[
C_{k,a}\cong \mathbb{C}-\{0\},~\phi=z^k
\mathrm{e}^{az},~\psi=-\frac{\mathrm{e}^{az}}{z^k},~ \mathrm{d}h=\frac{\mathrm{e}^{-az}}{z^k}\mathrm{d}z~.
~~(k\ge 2,~0<a<\pi/2)
\]
\end{example}
\begin{remark}
When $a=0$ in Example~\ref{exa-essen2} we obtain the \emph{Enneper surfaces
in $\mathbb{R}^3$ with higher dihedral symmetry}. Similarly in
Example~\ref{exa-essen3} when $a=0,k=1$ we get the classical
catenoid in $\mathbb{R}^3$. So these examples might be regarded as
deformations of Enneper-type surfaces and the catenoid.
When $k=1$ and $a\ne 0$, it seems that Example~\ref{exa-essen2} and
\ref{exa-essen3} provide new complete regular stationary surfaces in
$\mathbb{R}^4_1$ with total curvature $\int_M K\mathrm{d}M=-4\pi.$ Yet this is incorrect since the total Gaussian curvature does not
converge absolutely in this case (see the discussion after \eqref{eq-essen-metric}).
\end{remark}

In general, a minimal surface in $\mathbb{R}^3$ or
a stationary surface in $\mathbb{R}^4_1$ is said to be
\emph{of algebraic type} if there exists a
compact Riemann surface $\overline{M}$
with $M=\overline{M}\backslash\{p_1,p_2,\cdots,p_r\}$ such that
$\vec{x}_z \mathrm{d}z$ is a vector valued meromorphic form defined
on $\overline{M}$. In other words, the Gauss map $\phi,\psi$
and height differential $\mathrm{d}h$ extend to meromorphic functions/forms
on $\overline{M}$. For this surface class we may
establish a Gauss-Bonnet type formula similar to \eqref{eq-jorgemeeks}
(see Theorem~\ref{GB3}) which involves the indices of
the so-called singular ends. This is discussed in the next two sections.

\section{Singular ends}

Let $\vec{x}:M\rightarrow \mathbb{R}^4_1$ be a complete stationary surface
given by \eqref{x} with Weierstrass data $\phi,\psi,\mathrm{d}h$.
Its global geometry and topology is closely related with the behavior at infinity,
where we have a similar definition of \emph{(annular) ends} like the case in $\mathbb{R}^3$.
In this paper we always assume that the period condition \eqref{eq-period}
is satisfied at an annular end unless it is stated otherwise.

At an end of $\vec{x}:M\rightarrow \mathbb{R}^4_1$, although one of the four components of $\vec{x}$
tends to $\infty$, the surface might still be incomplete due to the Lorentz-type metric
of the ambient space. The simplest example is as below:
\begin{equation}\label{eq-incomplete}
\phi=z^{-2},~\psi=z^{-3},~\mathrm{d}h=\mathrm{d}z.
\end{equation}
Using \eqref{x} \eqref{eq-metric}, the reader can verify that
the surface has an incomplete end at $z=\infty$. Notice that at this end we have
$\phi(\infty)=\bar\psi(\infty)$
which is a typical case for such examples.

In general, although $\phi\ne\bar\psi$ at any regular point of a stationary $M\to\mathbb{R}^4_1$, it might happen that
$\phi=\bar\psi$ at one of the ends. In this case the total curvature
\eqref{eq-totalcurvature} involves an improper integral. On the other hand,
when the total curvature is finite, the worst case is that there are finite many such ends by Huber's theorem.
Thus we restrict to consider isolated zeros of complex harmonic function $\phi-\bar\psi$ in this section.

(It is an annoying problem to solve the equation $\phi(z)-\bar\psi(z)=0$ or to show non-existence of solutions for given functions $\phi,\psi$ over a given Riemann surface $M$. See Section~6 to 9 for examples and discussions.)

Below we always use $D$ to denote a neighborhood of $z=0$ on $\mathbb{C}$ and
$D$ is homeomorphic to a disc. By $D\to \{0\}$ we mean any limit process,
or a sequence of smaller and smaller neighborhood
$\cdots\supset D_i\supset D_{i+1}\supset\cdots$ whose intersection is $\{0\}$.

\subsection{Two types of singular ends}
\begin{definition}
Let $\vec{x}:D-\{0\}\rightarrow \mathbb{R}^4_1$
be an annular end of a regular stationary surface (with boundary)
whose Gauss maps $\phi$ and $\psi$
extend to meromorphic functions on $D$ (namely, $z=0$
is a removable singularity or a pole for both $\phi, \psi$).
It is called a \emph{regular end} when
\[
\phi(0)\ne\bar\psi(0).~~~(\text{Thus}~ \phi(z)\ne\bar\psi(z),~\forall~z\in D.)
\]
It is a \emph{singular end} if $\phi(0)=\bar\psi(0)$
where the value could be finite or $\infty$ (i.e., $z=0$ is a pole of both $\phi$ and $\psi$).
Such ends are divided into two classes
depending on whether functions $\phi,\bar\psi$
take the same value at $z=0$ with the same multiplicity or not.
When the multiplicities are equal we call it a \emph{bad singular end}. Otherwise it is called a \emph{good singular end}.
\end{definition}
The total Gaussian curvature is finite around a regular end by \eqref{eq-totalcurvature}.
As to a singular end, we may assume that $\phi=\bar{\psi}=0$ at $z=0$.
Without loss of generality we may write it out more explicitly:
\[
\phi=z^m+o(z^m),~\psi=b z^n+o(z^n)
\]
for complex number $b\ne 0$ and positive integers $m,n$.
Write $z=r\mathrm{e}^{\mathrm{i}\theta}$ with $r\ge 0$ and $\theta\in[0,2\pi)$.
At a good singular end $m\neq n$. Suppose $m>n\ge 1$. Then by \eqref{eq-totalcurvature},
\begin{align*}
&\int_{D}(-K+\mathrm{i}K^{\perp})\mathrm{d}M=2\mathrm{i}\int_{D}
\frac{\phi_z\bar{\psi}_{\bar{z}}}{(\phi-\bar{\psi})^2}\mathrm{d}z\wedge \mathrm{d}\bar{z}\\
=-4&\int_{D}\frac{r^{m+n-2}(\mathrm{e}^{\mathrm{i}(m-1)\theta}+o(r))(\mathrm{e}^{-\mathrm{i}(n-1)\theta}+o(r))}
{r^{2n}(b\mathrm{e}^{-\mathrm{i}(n-1)\theta}+o(r)))^2}r \mathrm{d}r \mathrm{d}\theta\rightarrow 0.~~(
D\rightarrow \{0\})
\end{align*}
This integral converges absolutely.
\begin{remark}
\label{rem-limit}
In particular, by \eqref{eq-totalcurvature} we see that the line integral
\[
\oint_{\partial D} \frac{\phi_z}{\phi-\bar{\psi}} \mathrm{d}z
~~\text{and}~~\oint_{\partial D}
\frac{\bar{\psi}_{\bar{z}}}{\phi-\bar{\psi}}\mathrm{d}\bar{z}
\]
both have well-defined limit when the neighborhood
$D\to \{0\}$.
\end{remark}

If this is a bad singular end, $m=n\ge 1$, then
\[
\int_{D}(-K+\mathrm{i}K^{\perp})\mathrm{d}M
=-4\int_{D}\left(\frac{f(\mathrm{e}^{\mathrm{i}\theta})}{r}+o\left(\frac{1}{r}\right)\right)
\mathrm{d}r \mathrm{d}\theta~.
\]
This integration does not converge absolutely.
Moreover it does depend on the limit process of $D\to \{0\}$.
As an explicit example, one can verify that the integral
\[
\int_{\partial A}\frac{\mathrm{d}z}{z-2\bar{z}},~~~
A=\{z|\mathrm{Re}(z)\in(-a,a),\mathrm{Im}(z)\in(-b,b)\}
\]
does depend on the choice of positive real numbers $a,b$.
Hence by \eqref{eq-totalcurvature} the curvature integral when
 $\phi(z)=z,\psi(z)=2z$ is divergent around
 any neighborhood of $z=0$.

 In summary we have proved
\begin{proposition}\label{prop-goodsingular}
A singular end of a stationary surface $\vec{x}:D-\{0\}\rightarrow \mathbb{R}^4_1$
is \emph{good} if and only if the curvature integral
\eqref{eq-totalcurvature} converges absolutely around this end.
Around a regular end or a good singular end, the two line integrals $\oint_{\partial D} \frac{\phi_z}{\phi-\bar{\psi}} \mathrm{d}z$
and $\oint_{\partial D}
\frac{\bar{\psi}_{\bar{z}}}{\phi-\bar{\psi}}\mathrm{d}\bar{z}$ has a well-defined limit when $D\to \{0\}$.
\end{proposition}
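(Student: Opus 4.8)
The plan is to reduce the whole statement to the local asymptotic analysis already begun just above, organized around the two equivalent expressions in \eqref{eq-totalcurvature}. After a M\"obius transformation \eqref{trans} normalizing the common value to $\phi(0)=\bar\psi(0)=0$, I would write the Weierstrass data in the normal form $\phi=z^m+o(z^m)$, $\psi=bz^n+o(z^n)$ with $b\neq 0$ and positive integers $m,n$ equal to the respective orders of vanishing. Every singular end then falls into exactly one of the two cases $m\neq n$ (good) or $m=n$ (bad), so it suffices to analyze the local curvature density $\mathcal{F}\triangleq\phi_z\bar\psi_{\bar z}/(\phi-\bar\psi)^2$ in polar coordinates $z=r\mathrm{e}^{\mathrm{i}\theta}$ and show that absolute integrability of $\mathcal{F}$ near $0$ holds precisely when $m\neq n$. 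Since every singular end is either good or bad, establishing the two implications good $\Rightarrow$ absolute convergence and bad $\Rightarrow$ divergence yields the asserted equivalence.

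For the good case I would carry out exactly the estimate displayed before the proposition: taking without loss of generality $m>n$, the denominator $(\phi-\bar\psi)^2$ is governed by its lower-order piece $\bar b^2\bar z^{2n}$ while the numerator is of order $r^{m+n-2}$, so after including the area factor the integrand behaves like $r^{m-n-1}\mathrm{d}r\,\mathrm{d}\theta$. Since $m-n-1\geq 0$, this converges absolutely, the integral itself tending to $0$ as $D\to\{0\}$. For the bad case $m=n$, the same expansion gives integrand $\sim f(\theta)\,r^{-1}\,\mathrm{d}r\,\mathrm{d}\theta$, where the angular factor $f$ is a rational function of $\mathrm{e}^{\mathrm{i}m\theta}$ with nonzero numerator constant $mn\bar b$; the nonintegrable $\int_0\mathrm{d}r/r$ then forces $\int_D|\mathcal{F}|=\infty$, so absolute convergence fails. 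The explicit rectangle computation with $\phi=z,\psi=2z$ confirms that in this case the integral is not merely non-absolutely-convergent but genuinely depends on the exhaustion.

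For the second assertion I would use the closed-form boundary expressions in \eqref{eq-totalcurvature}, which arise from Stokes' theorem applied to the $1$-form $\frac{\phi_z}{\phi-\bar\psi}\mathrm{d}z$ whose $\bar\partial$-derivative equals $\mathcal{F}\,\mathrm{d}\bar z\wedge\mathrm{d}z$. For any two nested disc-like neighborhoods $D'\subset D$ shrinking to $0$, the difference of the two boundary integrals equals, up to sign, the area integral of $\mathcal{F}$ over the region $D\setminus D'$. At a regular end ($\phi(0)\neq\bar\psi(0)$) the density $\mathcal{F}$ is bounded, and at a good singular end it is absolutely integrable by the first part; in either case the area integral over the shrinking region $D\setminus D'$ tends to $0$. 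A Cauchy-criterion argument then produces a well-defined limit of $\oint_{\partial D}\frac{\phi_z}{\phi-\bar\psi}\mathrm{d}z$ as $D\to\{0\}$, and the identical argument applies to the integral involving $\bar\psi_{\bar z}$.

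The step I expect to demand the most care is the shape-independence in this last paragraph: the limit must not depend on how $D\to\{0\}$ is taken, and this is exactly where absolute, rather than merely conditional, convergence is indispensable, since the bad-end rectangle example shows that without it the boundary integral really does vary with the exhaustion. The only genuinely technical points are confirming that the leading numerator coefficient $mn\bar b$ is nonzero and that the dominant balance in the denominator is controlled by $\min(m,n)$; both follow at once from $b\neq 0$ together with the regularity hypothesis $\phi\neq\bar\psi$ on $D\setminus\{0\}$, which also guarantees that $f$ is not identically zero in the bad case.
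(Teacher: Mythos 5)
Your proposal is correct and follows essentially the same route as the paper: the paper likewise normalizes to $\phi=z^m+o(z^m)$, $\psi=bz^n+o(z^n)$, shows the curvature density integrates absolutely like $r^{m-n-1}\,\mathrm{d}r\,\mathrm{d}\theta$ when $m\neq n$ and diverges like $\mathrm{d}r/r$ when $m=n$ (with the same $\phi=z$, $\psi=2z$ rectangle example), and deduces the existence of the boundary-integral limits from absolute convergence via Stokes. Your Cauchy-criterion formulation of the last step is just a slightly more explicit version of what the paper states in Remark~\ref{rem-limit}.
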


Later we will see that there exist complete stationary surfaces
of finite total curvature with finite many good singular ends
(see Example~\ref{exa-singular1} and \ref{exa-singular2}).

\subsection{Index of a good singular end}

To establish a Gauss-Bonnet type formula relating the total Gaussian curvature
and the topology for such surfaces we need to define the index of a good singular end.
This is equivalent to the usual index of a zero of the complex function $\phi-\bar\psi$
(suppose at the end $z=0$, $\phi(0)=\bar\psi(0)\ne\infty$ without loss of generality).
\begin{lemma}
\label{lemma-index}
Denote $D_{\varepsilon}=\{z||z|<\varepsilon\}$. Let $m,n$ be non-negative integers. Then
\begin{equation}
\label{eq-index}
\oint_{\partial D_{\varepsilon}}d\ln(z^m-\bar{z}^n)=\left\{
   \begin{array}{ll}
          2\pi\mathrm{i}\cdot m, & \hbox{$m<n$;} \\
    -2\pi\mathrm{i}\cdot n, & \hbox{ $m>n$.}
   \end{array}
 \right.
 \end{equation}
 \end{lemma}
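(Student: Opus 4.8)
The plan is to compute the contour integral $\oint_{\partial D_\varepsilon} d\ln(z^m - \bar z^n)$ by tracking the argument of the complex number $w(z) = z^m - \bar z^n$ as $z$ traverses the circle $|z| = \varepsilon$ once counterclockwise. Since $\oint d\ln(w) = \oint d(\ln|w|) + \mathrm{i}\oint d(\arg w)$ and the real part $\oint d(\ln|w|)$ vanishes (because $|w|$ returns to its initial value after one loop, and $w$ is nonvanishing on the circle once $m \neq n$), the integral equals $2\pi\mathrm{i}$ times the winding number of $w$ about the origin. So everything reduces to determining that winding number, which I expect will be exactly $m$ when $m < n$ and $-n$ when $m > n$.

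\textbf{Key steps.} First I would substitute $z = \varepsilon\, \mathrm{e}^{\mathrm{i}\theta}$, giving $w(\theta) = \varepsilon^m \mathrm{e}^{\mathrm{i}m\theta} - \varepsilon^n \mathrm{e}^{-\mathrm{i}n\theta}$ for $\theta \in [0, 2\pi)$. The two terms are circles traversed at angular speeds $m$ and $-n$ respectively. The natural tool is the argument principle combined with a dominance/homotopy argument: when $\varepsilon$ is small and $m < n$, the term $\varepsilon^m \mathrm{e}^{\mathrm{i}m\theta}$ dominates in modulus ($\varepsilon^m \gg \varepsilon^n$), so $w(\theta)$ is a small perturbation of $\varepsilon^m \mathrm{e}^{\mathrm{i}m\theta}$ that stays in the same half-plane relative to it, and hence winds $m$ times. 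Dually, when $m > n$ the term $-\varepsilon^n \mathrm{e}^{-\mathrm{i}n\theta}$ dominates and contributes winding number $-n$. To make this rigorous I would invoke Rouch\'e's theorem: writing $w$ as a holomorphic-plus-antiholomorphic sum is awkward, so instead I would treat $\bar z^n$ on $|z| = \varepsilon$ as $\varepsilon^{2n}/z^n$ (since $\bar z = \varepsilon^2/z$ there), turning $w$ into $z^{-n}(z^{m+n} - \varepsilon^{2n})$ up to the single-valued factor; the winding number of $z^{-n}$ is $-n$, and the winding number of $z^{m+n} - \varepsilon^{2n}$ is governed by how many of its roots lie inside $|z| = \varepsilon$. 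For $m < n$ I would instead factor out $z^m$.

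\textbf{Carrying it out concretely.} On $|z| = \varepsilon$ we have $\bar z^n = \varepsilon^{2n} z^{-n}$, so
\begin{equation}
z^m - \bar z^n = z^{-n}\bigl(z^{m+n} - \varepsilon^{2n}\bigr).
\end{equation}
Therefore $\oint_{\partial D_\varepsilon} d\ln(z^m - \bar z^n) = \oint d\ln(z^{-n}) + \oint d\ln(z^{m+n} - \varepsilon^{2n})$. The first piece is $-2\pi\mathrm{i}\,n$. The second piece, by the argument principle, equals $2\pi\mathrm{i}$ times the number of zeros of $z^{m+n} - \varepsilon^{2n}$ inside $|z| < \varepsilon$. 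The polynomial $z^{m+n} = \varepsilon^{2n}$ has roots of modulus $\varepsilon^{2n/(m+n)}$, and $\varepsilon^{2n/(m+n)} < \varepsilon$ precisely when $2n/(m+n) > 1$, i.e. $n > m$; in that case all $m+n$ roots lie inside, contributing $2\pi\mathrm{i}(m+n)$, so the total is $-2\pi\mathrm{i}\,n + 2\pi\mathrm{i}(m+n) = 2\pi\mathrm{i}\,m$, matching the $m < n$ case. When $m > n$ we have $\varepsilon^{2n/(m+n)} > \varepsilon$, so \emph{no} roots lie inside, the second piece vanishes, and the total is $-2\pi\mathrm{i}\,n$, matching the $m > n$ case.

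\textbf{Main obstacle.} The delicate point is the substitution $\bar z^n = \varepsilon^{2n} z^{-n}$, which is valid only on the circle $|z| = \varepsilon$ and not in a neighborhood, so $z^{m+n} - \varepsilon^{2n}$ must be understood as an auxiliary holomorphic function used \emph{only} to evaluate the boundary integral via $d\ln$; I must check that $d\ln(z^m - \bar z^n)$ genuinely agrees with $d\ln\bigl(z^{-n}(z^{m+n}-\varepsilon^{2n})\bigr)$ as $1$-forms restricted to the contour, which holds since the two functions coincide pointwise there and $d\ln$ of a nonvanishing function depends only on boundary values along the path. I also need $m \neq n$ to guarantee $w \neq 0$ on the circle (so that $\ln w$ is locally defined and the integral is well-posed); the excluded case $m = n$ is exactly the bad singular end, consistent with the divergence already noted in the text. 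The boundary case where a root lies exactly on $|z| = \varepsilon$ cannot occur since $m \neq n$ forces $\varepsilon^{2n/(m+n)} \neq \varepsilon$.
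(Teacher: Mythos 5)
Your proof is correct, but it follows a genuinely different route from the paper's. The paper splits $d\ln(z^m-\bar z^n)$ into its $\mathrm{d}z$ and $\mathrm{d}\bar z$ parts, observes that the integral is independent of $\varepsilon$, and then evaluates the limit of each piece as $\varepsilon\to 0$ by a dominance estimate (for $m>n$ the $\mathrm{d}z$ piece tends to $0$ and the $\mathrm{d}\bar z$ piece tends to $-2\pi\mathrm{i}n$, and symmetrically for $m<n$). You instead use the identity $\bar z=\varepsilon^2/z$ on the contour to rewrite $z^m-\bar z^n=z^{-n}\left(z^{m+n}-\varepsilon^{2n}\right)$ and apply the argument principle to each meromorphic factor, which yields the exact value for every fixed admissible $\varepsilon$ with no limit process; this is clean and also makes the $\varepsilon$-independence manifest. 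Your justification that $\oint d\ln$ depends only on the boundary values of the (nonvanishing) function along the path is the right one for legitimizing the substitution. Two small points: the comparison $\varepsilon^{2n/(m+n)}<\varepsilon\Leftrightarrow n>m$ requires $\varepsilon<1$, and indeed the zeros of $z^m-\bar z^n$ away from the origin all lie on $|z|=1$, so the lemma (and your root-counting) should be read with $\varepsilon<1$; your remark that a root cannot land on $|z|=\varepsilon$ is therefore accurate only for $\varepsilon\ne 1$, not as a consequence of $m\ne n$ alone. The trade-off is that the paper's estimate-based argument transfers directly to the generalization (Lemma~\ref{lemma-index2}) where $\phi,\psi$ are arbitrary holomorphic functions with prescribed vanishing orders, whereas the reflection trick $\bar z\mapsto\varepsilon^2/z$ is special to circles and monomials and would need the dominance argument anyway in that setting.
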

\begin{proof}
The result is trivial when $m=0$ or $n=0$. If $m,n\ge 1$, we have
\[
\oint_{\partial D_{\varepsilon}}d\ln(z^m-\bar{z}^n)=
 \oint_{\partial D_{\varepsilon}}\frac{mz^{m-1}}{z^m-\bar{z}^n}\mathrm{d}z+
 \oint_{\partial D_{\varepsilon}}\frac{-n\bar{z}^{n-1}}{z^m-\bar{z}^n}\mathrm{d}\bar{z}~.
 \]
\[
\text{If}~m>n,~~\lim_{\varepsilon\rightarrow0} \oint_{\partial
D_{\varepsilon}}\frac{mz^{m-1}}{z^m-\bar{z}^n}\mathrm{d}z=0~,~~~
\lim_{\varepsilon\rightarrow0} \oint_{\partial
D_{\varepsilon}}\frac{-n\bar{z}^{n-1}}{z^m-\bar{z}^n}\mathrm{d}\bar{z}=-n\cdot2\pi\mathrm{i}~.
\]
\[
\text{If}~m<n,~~
\lim_{\varepsilon\rightarrow0} \oint_{\partial
D_{\varepsilon}}\frac{mz^{m-1}}{z^m-\bar{z}^n}\mathrm{d}z=m\cdot2\pi\mathrm{i}~,~~~
\lim_{\varepsilon\rightarrow0} \oint_{\partial
D_{\varepsilon}}\frac{-n\bar{z}^{n-1}}{z^m-\bar{z}^n}\mathrm{d}\bar{z}=0~.
\]
Since the integral is independent to the choice of $D_\epsilon$,
taking the limit $\epsilon\to 0$ we obtain the result.
\end{proof}
Observe that according to Remark~\ref{rem-limit}, all these integrals above
has a well-defined limit  when we consider generic neighborhood $D_p$ and
general limit process $D_p\to\{p\}$. Moreover we can state our results in a
more general way as below and the proof is direct.
\begin{lemma}\label{lemma-index2}
Suppose $p$ is an isolated zero of $\phi-\bar\psi$ in $p$'s neighborhood $D_p$,
where holomorphic functions $\phi$ and $\psi$ take the value
$\phi(p)=\overline{\psi(p)}$ with multiplicity $m$ and $n$, respectively. Then we have:
\begin{gather*}
\text{If}~m>n,~~\lim_{D_p\to \{p\}} \oint_{\partial
D_p}\frac{\phi_z}{\phi-\bar\psi}\mathrm{d}z=0~,~~~
\lim_{D_p\to \{p\}} \oint_{\partial
D_p}\frac{\bar\psi_{\bar{z}}}{\phi-\bar\psi}\mathrm{d}\bar{z}=-n\cdot2\pi\mathrm{i}~.\\
\text{If}~m<n,~~
\lim_{D_p\to \{p\}} \oint_{\partial
D_p}\frac{\phi_z}{\phi-\bar\psi}\mathrm{d}z=m\cdot2\pi\mathrm{i}~,~~~
\lim_{D_p\to \{p\}} \oint_{\partial
D_p}\frac{\bar\psi_{\bar{z}}}{\phi-\bar\psi}\mathrm{d}\bar{z}=0~.
\end{gather*}
In particular,
\begin{equation}
\frac{1}{2\pi\mathrm{i} }\oint_{\partial D_p}d\ln(\phi-\bar{\psi})=\left\{
   \begin{array}{ll}
          m, & \hbox{$m<n$;} \\
         -n, & \hbox{ $m>n$.}
   \end{array}
 \right.
\end{equation}
\end{lemma}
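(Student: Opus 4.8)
The plan is to reduce the two line integrals to the pure monomial situation already settled in Lemma~\ref{lemma-index}, by peeling off the leading terms of $\phi$ and $\psi$ and showing that the remainders contribute nothing in the limit. First I would translate $p$ to the origin and, as explained just before Lemma~\ref{lemma-index}, normalize the common value to $\phi(0)=\overline{\psi(0)}=0$ by a M\"obius change on the target. The hypotheses that $\phi,\psi$ are holomorphic with vanishing orders $m,n$ then let me factor
\[
\phi(z)=z^m u(z),\qquad \psi(z)=z^n v(z),
\]
with $u,v$ holomorphic near $0$ and $u(0)\neq 0\neq v(0)$. By Remark~\ref{rem-limit} and Proposition~\ref{prop-goodsingular} each of the three limits in question exists and is independent of the exhaustion $D_p\to\{p\}$, so I am free to evaluate them on the round circles $\partial D_\varepsilon=\{|z|=\varepsilon\}$ and let $\varepsilon\to 0$.

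The technical engine is the rescaling $z=\varepsilon w$, which sends every integral to the fixed unit circle $|w|=1$ and realizes its $\varepsilon\to 0$ limit as a \emph{uniform} limit of integrands. Writing $\phi-\bar\psi=z^m u(z)-\bar z^n\overline{v(z)}$ and factoring out the smaller of the two powers $\varepsilon^m,\varepsilon^n$, one checks that after rescaling each integrand for the general pair $\phi,\psi$ converges uniformly on $|w|=1$ to the very same limiting integrand as the corresponding pure monomial integrand $\frac{m z^{m-1}}{z^m-\bar z^n}$ (for the $\mathrm{d}z$ piece), respectively $\frac{n\bar z^{n-1}}{z^m-\bar z^n}$ (for the $\mathrm{d}\bar z$ piece, up to the elementary sign in $\partial_{\bar z}(\phi-\bar\psi)=-\bar\psi_{\bar z}$), that appears in Lemma~\ref{lemma-index}. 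Indeed the nonzero factors $u(0),\overline{v(0)}$ cancel between numerator and denominator, while the derivative terms and all higher order remainders carry strictly positive powers of $\varepsilon$ and drop out, leaving only the dominant power of $\varepsilon$ in the denominator. Passing the limit under the integral sign then identifies each general integral with the monomial one evaluated in Lemma~\ref{lemma-index}, giving the four stated limits.

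The ``in particular'' is immediate once the two pieces are known: either sum them through $d\ln(\phi-\bar\psi)=\frac{\phi_z}{\phi-\bar\psi}\mathrm{d}z-\frac{\bar\psi_{\bar z}}{\phi-\bar\psi}\mathrm{d}\bar z$, or, more conceptually, recognize $\frac{1}{2\pi\mathrm{i}}\oint_{\partial D_p}d\ln(\phi-\bar\psi)$ as the winding number about $0$ of the loop $z\mapsto\phi(z)-\bar\psi(z)$ and apply a Rouch\'e comparison with its dominant monomial $u(0)z^m$ (when $m<n$) or $-\overline{v(0)}\,\bar z^n$ (when $m>n$), whose winding numbers are $m$ and $-n$. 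The one genuine technical point, and the main thing to verify, is the uniform convergence in the previous paragraph: after factoring out the dominant power of $\varepsilon$ one must bound the remaining denominator below by a positive constant on $|w|=1$, uniformly in small $\varepsilon$. This is precisely where the sharpness of the vanishing orders ($u(0)\neq0$, $v(0)\neq0$) and the strict inequality $m\neq n$ enter — the latter guaranteeing that the two competing leading monomials $u(0)w^m$ and $\overline{v(0)}\,\bar w^n$ cannot cancel along any arc of the circle, as they would at a bad singular end where the curvature integral itself diverges.
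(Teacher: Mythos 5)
Your argument is correct and is essentially the paper's own: the paper declares the proof of this lemma ``direct'', meaning exactly the extension of the leading-order computation of Lemma~\ref{lemma-index} from monomials to $\phi=z^m u(z)$, $\psi=z^n v(z)$ with $u(0),v(0)\ne 0$, and your rescaling to the unit circle with a uniform lower bound on the rescaled denominator is a careful justification of that reduction. One bookkeeping point to settle explicitly: carried through consistently, your method yields $\lim\oint_{\partial D_p}\frac{\bar\psi_{\bar z}}{\phi-\bar\psi}\,\mathrm{d}\bar z=+2\pi\mathrm{i}\,n$ when $m>n$, because the second integrand of Lemma~\ref{lemma-index} is $\frac{-n\bar z^{\,n-1}}{z^m-\bar z^{\,n}}=\frac{(\phi-\bar\psi)_{\bar z}}{\phi-\bar\psi}=-\frac{\bar\psi_{\bar z}}{\phi-\bar\psi}$; this value $+2\pi\mathrm{i}\,n$ is the one consistent with the ``in particular'' (giving $\oint d\ln(\phi-\bar\psi)=0-2\pi\mathrm{i}\,n$) and with the way the lemma is used in Theorem~\ref{GB2}, so make sure your ``elementary sign'' bookkeeping lands there rather than on the $-2\pi\mathrm{i}\,n$ printed in the displayed statement.
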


These integer-valued topological invariants enable us to define
two kinds of indices at a good singular end $\vec{x}:D_p-\{p\}\rightarrow \mathbb{R}^4_1$.
\begin{definition}
\emph{The index of $\phi-\bar{\psi}$ at $p$} (when $\phi,\psi$ are both holomorphic at $p$) is
\begin{equation}\label{eqind}
\mathrm{ind}_p(\phi-\bar{\psi})\triangleq\lim_{D_{p}\rightarrow
\{p\}}\frac{1}{2\pi\mathrm{i}}\oint_{\partial D_{p}}d\ln(\phi-\bar{\psi})~.
\end{equation}
\emph{The absolute index of $\phi-\bar{\psi}$ at $p$} is
\begin{equation}\label{eqind+}
\mathrm{ind}^{+}_p(\phi-\bar{\psi})\triangleq\left|\mathrm{ind}_p(\phi-\bar{\psi})\right|.
\end{equation}
\end{definition}
\begin{remark}
For a regular end our index is still meaningful with the value
\[
\mathrm{ind}=\mathrm{ind}^+=0.
\]
So singular ends are distinguishes from regular ends by these indices.
\end{remark}
\begin{remark}
Note that our definition of index of $\phi-\bar\psi$ is invariant
under the action of fractional linear transformation \eqref{trans}.
So it is well-defined for a stationary surface with good singular ends
and independent to the choice of coordinates of $\mathbb{R}^4_1$.
In particular, we can always assume that our singular ends do not coincide with poles of $\phi,\psi$,
hence the definition above is valid.
\end{remark}
We notice that the poles of $\phi$ or $\psi$ are also
singularities of $d\ln(\phi-\bar\psi)$ in a general sense.
As a supplement to the definition above, we count their contribution to
the indices of singularities according to
\begin{lemma}\label{lem-pole}
Suppose $\phi$ is a meromorphic function in a
neighborhood $D_q$ of $q$ with one pole of order $k$, $\psi$ is holomorphic
in $D_q$. Then
\begin{gather*}
\oint_{\partial
D_{q}}d\ln(\phi-\bar{\psi})=
\lim_{D_{q}\rightarrow \{q\}}\oint_{\partial
D_{q}}\frac{\phi_z}{\phi-\bar{\psi}}\mathrm{d}z
= -2\pi\mathrm{i}\cdot k~,\\
\lim_{D_{q}\rightarrow \{q\}}\oint_{\partial
D_{q}}\frac{\bar\psi_{\bar{z}}}{\phi-\bar{\psi}}d{\bar{z}}
=0~.
\end{gather*}
(When $\phi$ is holomorphic and $\psi$ has a pole we have a similar result with a different sign.)
\end{lemma}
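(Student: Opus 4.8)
The plan is to reduce everything to the local model near the pole, where $\phi$ blows up while $\bar\psi$ stays bounded, so that $\phi-\bar\psi$ behaves like $\phi$ to leading order. Set $q=0$ without loss of generality and write $\phi(z)=cz^{-k}(1+O(z))$ with $c\neq 0$ near $0$, while $\psi$ and hence $\psi_z,\bar\psi,\bar\psi_{\bar z}$ stay bounded there. Since $|\phi|\to\infty$ dominates the bounded quantity $|\bar\psi|$, for all sufficiently small $\varepsilon$ we have $\phi-\bar\psi\neq 0$ on $\overline{D_\varepsilon}-\{0\}$. Consequently $d\ln(\phi-\bar\psi)$ is a well-defined closed $1$-form on the punctured disc: its exterior derivative vanishes because $d(\phi-\bar\psi)\wedge d(\phi-\bar\psi)=0$, being the self-wedge of a sum of a $(1,0)$-form $\phi_z\,\mathrm{d}z$ and a $(0,1)$-form $-\bar\psi_{\bar z}\,\mathrm{d}\bar z$ (using $\phi_{\bar z}=0=\bar\psi_z$). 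Hence the contour integral over $\partial D_\varepsilon$ is independent of $\varepsilon$, which already makes the first quantity a genuine topological invariant and explains why no limit is needed for it.

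Next I would establish the vanishing statement by a crude size estimate. From the same splitting,
\[
d\ln(\phi-\bar\psi)=\frac{\phi_z}{\phi-\bar\psi}\,\mathrm{d}z-\frac{\bar\psi_{\bar z}}{\phi-\bar\psi}\,\mathrm{d}\bar z .
\]
In the second term the numerator is bounded while $|\phi-\bar\psi|\sim|c|\,|z|^{-k}$, so the integrand is $O(|z|^{k})$; integrating over $\partial D_\varepsilon$, where $|\mathrm{d}\bar z|=\varepsilon\,\mathrm{d}\theta$, gives a bound $O(\varepsilon^{k+1})\to 0$. This proves $\lim_{D_q\to\{q\}}\oint_{\partial D_q}\frac{\bar\psi_{\bar z}}{\phi-\bar\psi}\,\mathrm{d}\bar z=0$.

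For the main value I would compute the total winding number by a homotopy argument. Consider $w_t=\phi-t\bar\psi$ for $t\in[0,1]$; since $|\phi|$ dominates $|\bar\psi|$ on a small circle, $w_t\neq 0$ there for every $t$, so the integer $\tfrac{1}{2\pi\mathrm{i}}\oint_{\partial D_\varepsilon}d\ln w_t$ depends continuously on $t$ and is therefore constant. At $t=0$ it equals $\tfrac{1}{2\pi\mathrm{i}}\oint d\ln\phi$, which by the argument principle is $\#\text{zeros}-\#\text{poles}=-k$, because $\phi$ has a pole of order $k$ and, being $\to\infty$, no zeros in a small disc. Hence $\oint_{\partial D_q}d\ln(\phi-\bar\psi)=-2\pi\mathrm{i}\cdot k$. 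Combining this $\varepsilon$-independent value with the vanishing of the $\mathrm{d}\bar z$ part shows that $\lim\oint\frac{\phi_z}{\phi-\bar\psi}\,\mathrm{d}z=-2\pi\mathrm{i}\cdot k$ as well, which gives all three claims. The parenthetical case, with $\psi$ carrying the pole and $\phi$ holomorphic, follows after interchanging the roles, noting that now $\phi-\bar\psi\approx-\bar\psi$ is anti-holomorphic, so the same computation applied to $d\ln\bar\psi$ conjugates the winding count and flips the sign to $+2\pi\mathrm{i}\cdot k$ (with the two line integrals exchanging their roles).

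I expect the only real care to lie in the uniform domination estimate that $|\phi|$ beats $|\bar\psi|$ on small circles. This single fact simultaneously guarantees $\phi-\bar\psi\neq 0$, legitimizes the logarithm and the closedness of $d\ln(\phi-\bar\psi)$, keeps $w_t$ nonvanishing throughout the homotopy, and produces the $O(\varepsilon^{k+1})$ decay. Everything else is routine bookkeeping with the argument principle, so I do not anticipate any genuine obstacle beyond making these uniform bounds explicit.
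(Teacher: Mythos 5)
Your argument is correct, but it reaches the conclusion by a different route than the paper. The paper's proof is a one-line perturbation of the residue theorem: it freezes $\bar\psi$ at its value $\overline{\psi(q)}$, so that $\frac{\phi_z}{\phi-\overline{\psi(q)}}\,\mathrm{d}z$ becomes a genuinely meromorphic $1$-form whose contour integral is $-2\pi\mathrm{i}k$ by the residue theorem, and then controls the error via the displayed identity
\[
\frac{\phi_z}{\phi-\bar{\psi}}-\frac{\phi_z}{\phi-\overline{\psi(q)}}
=\frac{(\bar\psi-\overline{\psi(q)})\,\phi_z}{(\phi-\bar{\psi})(\phi-\overline{\psi(q)})}
=O(|z-q|^{k}),
\]
which integrates to $O(\varepsilon^{k+1})\to 0$ over small circles. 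You instead observe that $d\ln(\phi-\bar\psi)$ is closed off the pole, deform $\bar\psi$ to zero through the homotopy $w_t=\phi-t\bar\psi$ (legitimate because $|\phi|$ dominates $|\bar\psi|$ on small circles), and read off the integer $-k$ from the argument principle applied to $\phi$ alone; the $\mathrm{d}\bar z$ part is killed by the same size estimate. Both proofs rest on the single fact that $|\phi|\to\infty$ while $\bar\psi$ stays bounded. The paper's version has the small advantage of directly producing the two separate line-integral limits in one computation; yours makes the $\varepsilon$-independence of $\oint d\ln(\phi-\bar\psi)$ explicit and packages the estimate into a topological (winding-number) statement, which is arguably more conceptual. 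One minor point worth making explicit in your write-up: the lemma's limits are taken over arbitrary neighborhoods $D_q\to\{q\}$, not just round discs; passing from your circles to general $D_q$ requires either Stokes' theorem together with the absolute convergence of the corresponding area integral near $q$ (as in Remark~\ref{rem-limit}), or the closedness you already established for the full form $d\ln(\phi-\bar\psi)$. This is routine and the paper glosses over it too, so it is not a gap, but it deserves a sentence. Your treatment of the parenthetical case (sign flip to $+2\pi\mathrm{i}k$ because $\oint d\ln\bar z=-2\pi\mathrm{i}$) is also correct.
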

\begin{proof}
By the residue theorem, and simple estimation using
\[
\frac{\phi_z}{\phi-\bar{\psi}}-\frac{\phi_z}
{\phi-\overline{\psi(q)}}
=\frac{(\bar\psi-\overline{\psi(q)})\phi_z}
{(\phi-\bar{\psi})(\phi-\overline{\psi(q)})}~.~~~~~~~~~~~\qedhere
\]
\end{proof}

\subsection{An index theorem}

\begin{proposition}\label{prop-index}
Let $\phi,\psi:\overline{M}\to\mathbb{C}\cup\{\infty\}$ be meromorphic functions
on a compact Riemann surface $\overline{M}$. Suppose that $\phi=\bar{\psi}$ on $\{p_j,j=1,\cdots,k\}$.
Denote the degree of a holomorphic mapping $\phi$ by $\deg\phi$. Then we have
\begin{equation}
\sum{_{j}}\mathrm{ind}_{p_j}(\phi-\bar{\psi})=\deg\phi-\deg\psi~.
\end{equation}
\end{proposition}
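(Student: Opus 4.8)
The plan is to read the identity as a global residue computation for the closed complex-valued $1$-form
\[
\eta\triangleq d\ln(\phi-\bar\psi)=\frac{\phi_z}{\phi-\bar\psi}\,\mathrm{d}z-\frac{\bar\psi_{\bar z}}{\phi-\bar\psi}\,\mathrm{d}\bar z
\]
on $\overline{M}$. First I would verify that $\eta$ is genuinely closed off its singular set. Since $\phi$ is holomorphic and $\bar\psi$ anti-holomorphic, $\eta=d(\phi-\bar\psi)/(\phi-\bar\psi)$, and the exterior derivatives of its two pieces are, up to sign, the single curvature $2$-form appearing in \eqref{eq-totalcurvature}; they cancel, so $d\eta=0$ away from the singularities. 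The singular set of $\eta$ consists precisely of the zeros $\{p_j\}$ of $\phi-\bar\psi$ (the good singular ends), together with the poles of $\phi$ and the poles of $\psi$. By condition 1) of Theorem~\ref{thm-period}, and after a fractional linear transformation \eqref{trans} if necessary, we may assume the poles of $\phi$ and $\psi$ are disjoint and that no $p_j$ coincides with a pole, so all these are isolated and mutually distinct.

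Next I would excise a small disc $D_s(\varepsilon)$ around each singular point $s$ and apply Stokes' theorem on the compact surface-with-boundary $M_\varepsilon=\overline{M}\setminus\bigcup_s D_s(\varepsilon)$. Because $d\eta=0$ there, $0=\int_{M_\varepsilon}d\eta=\int_{\partial M_\varepsilon}\eta=-\sum_s\oint_{\partial D_s(\varepsilon)}\eta$, the boundary circles carrying the orientation reversed from their orientation as boundaries of the discs. Letting $\varepsilon\to 0$ and dividing by $2\pi\mathrm{i}$ shows that the sum over \emph{all} singularities of the local index $\frac{1}{2\pi\mathrm{i}}\oint_{\partial D_s}\eta$ vanishes. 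This is the crux: it reduces the global statement to adding the local contributions, each already computed in the preceding lemmas.

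Finally I would insert the local values. The zeros $p_j$ contribute $\sum_j\mathrm{ind}_{p_j}(\phi-\bar\psi)$ by the definition \eqref{eqind}. By Lemma~\ref{lem-pole}, a pole of $\phi$ of order $k$ contributes $-k$ and a pole of $\psi$ of order $k$ contributes $+k$. Summing the pole terms and using the standard fact that the total pole order of a meromorphic function on a compact Riemann surface equals its degree gives $-\sum_{\phi\text{-poles}}k=-\deg\phi$ and $+\sum_{\psi\text{-poles}}k=+\deg\psi$. Combining all contributions, $\sum_j\mathrm{ind}_{p_j}(\phi-\bar\psi)-\deg\phi+\deg\psi=0$, which is the asserted formula.

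The individual steps are routine once the setup is fixed; the point that demands care, and which I would treat as the main obstacle, is the bookkeeping of the singular set: confirming that $\eta$ has no singularities beyond the $p_j$ and the poles of $\phi,\psi$, that these may be taken mutually disjoint (otherwise the index at a coincidence would need to be recomputed), and that every $p_j$ is a \emph{good} singular end so that the limit defining $\mathrm{ind}_{p_j}$ in \eqref{eqind} exists at all. A bad singular end ($m=n$ in the notation of Lemma~\ref{lemma-index2}) would render the corresponding period ill-defined and break the argument, so the standing hypothesis that each $\mathrm{ind}_{p_j}$ is defined is exactly the requirement that all the $p_j$ be good.
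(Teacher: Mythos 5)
Your proof is correct and follows essentially the same route as the paper's: both treat $d\ln(\phi-\bar\psi)$ as a closed $1$-form on the complement of the zeros of $\phi-\bar\psi$ and the poles of $\phi,\psi$, apply Stokes' theorem, and evaluate the local contributions via Lemma~\ref{lem-pole} and the definition \eqref{eqind}. Your explicit remark that each $p_j$ must be a good zero (unequal multiplicities) for the limit defining $\mathrm{ind}_{p_j}$ to exist is a point the paper leaves implicit, and is a worthwhile precaution.
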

\begin{proof}
Without loss of generality, suppose that $\phi,\psi$ have distinct poles $\{q_l\}$
and $\{\hat{q}_m\}$, whose orders sum to $\deg\phi$
and $\deg\psi$, respectively.
In particular, $p_j$'s are distinct from them.
$d\ln(\phi-\bar{\psi})$ is an exact $1$-form on the supplement of
these distinct points. By Stokes formula we get
\begin{equation*}
\begin{split}
2\pi\mathrm{i}\cdot & \sum_{j=1}^k \mathrm{ind}_{p_j}(\phi-\bar{\psi})
=\sum_{j=1}^{k}\lim_{D_{p_j}\rightarrow \{p_j\}}
\oint_{\partial D_{p_j}}d\ln(\phi-\bar{\psi})\\
&=-\sum_{l=1}^{k_1}\lim_{D_{q_l}\rightarrow \{q_l\}}
\oint_{\partial D_{q_l}}d\ln(\phi-\bar{\psi})
-\sum_{m=1}^{k_2}\lim_{D_{\hat{q}_m}\rightarrow
\{\hat{q}_m\}}\oint_{\partial D_{\hat{q}_m}}d\ln(\phi-\bar{\psi})\\
&=2\pi\mathrm{i}(\deg\phi-\deg\psi)~.
\end{split}
\end{equation*}
The last equality follows from Lemma~\ref{lem-pole} (note that
$\psi(q_l)$ is a complex number when $q_l$ is a pole of $\phi$). The conclusion follows.
\end{proof}
\begin{corollary}\label{cor-deg}
Let $\phi$ and $\psi$ be holomorphic maps
from $\overline{M}$ to $\mathbb{C}P^1$ satisfying $\phi\neq\bar{\psi}$ on
$\overline{M}$. Then $\deg\phi=\deg\psi$.
\end{corollary}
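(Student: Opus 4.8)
The plan is to obtain this as an immediate consequence of Proposition~\ref{prop-index}. The key observation is that the hypothesis $\phi\neq\bar{\psi}$ on all of $\overline{M}$ means precisely that the set $\{p_j\}$ of points where $\phi=\bar{\psi}$ is empty. Therefore the left-hand side of the index formula in Proposition~\ref{prop-index} is an empty sum, hence equals zero:
\begin{equation*}
0=\sum_{j}\mathrm{ind}_{p_j}(\phi-\bar{\psi})=\deg\phi-\deg\psi~.
\end{equation*}
Rearranging gives $\deg\phi=\deg\psi$ directly.

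The only point that requires a moment's care is whether Proposition~\ref{prop-index} genuinely applies when there are no coincidence points at all. Reading its proof, the argument runs by taking the exact $1$-form $d\ln(\phi-\bar{\psi})$ on the complement of the poles of $\phi,\psi$ together with the coincidence points $\{p_j\}$, and applying Stokes' formula on $\overline{M}$ with small discs removed around all these distinguished points. When $\{p_j\}=\varnothing$, the form $d\ln(\phi-\bar{\psi})$ is still well-defined and exact away from the poles of $\phi$ and $\psi$ (since $\phi-\bar{\psi}$ never vanishes, $\ln(\phi-\bar{\psi})$ has no branch-point obstruction there), and the Stokes argument balances only the contributions at the poles. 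By Lemma~\ref{lem-pole} these contributions sum to $-2\pi\mathrm{i}(\deg\phi-\deg\psi)$, so the total residue being zero forces $\deg\phi=\deg\psi$. Thus one need not even separately invoke the statement of Proposition~\ref{prop-index}; the corollary is essentially the vacuous-index specialization of its proof.

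I do not anticipate a genuine obstacle here, as the corollary is a formal consequence of the proposition. If anything, the one thing worth double-checking is the hypothesis that $\phi,\psi$ map to $\mathbb{C}P^1$ rather than $\mathbb{C}$, so that ``$\deg$'' refers to the degree of a branched covering of the sphere (counting poles as preimages of $\infty$); this is exactly the sense used in Proposition~\ref{prop-index}, where $\deg\phi$ is the total pole order. Since $\phi\neq\bar{\psi}$ in particular rules out the two sharing a pole, the pole configurations are disjoint and Lemma~\ref{lem-pole} applies cleanly at each. Hence the conclusion $\deg\phi=\deg\psi$ follows at once.
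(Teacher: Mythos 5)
Your proof is correct and is essentially the paper's intended argument: the corollary is stated as an immediate consequence of Proposition~\ref{prop-index}, obtained by observing that the coincidence set is empty so the index sum vanishes, and your check that the Stokes argument still balances only the pole contributions (via Lemma~\ref{lem-pole}) is exactly the right point to verify. The paper additionally remarks that one could instead count preimages of $0$ and $\infty$ under the map $\phi-\bar{\psi}:\overline{M}\to\mathbb{C}P^1$, but your route is the one the paper's structure presupposes.
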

\begin{remark}
This corollary could also be proved by considering the mapping $\phi-\bar\psi: \overline{M} \to \mathbb{C}P^1=\mathbb{C}\cup\{\infty\}$
and counting the number of inverse images of $0$ and $\infty$, separately. Proposition~\ref{prop-index}
appears later as a corollary of Theorem~\ref{GB2}.
See \eqref{eq-deg3}.
\end{remark}

\section{Gauss-Bonnet type formulas}

After detailed discussion of singular ends and their indices, now we are in the position to state and prove the following index theorem.
\begin{theorem}\label{GB2}
Let $\vec{x}:M\rightarrow \mathbb{R}^4_1$ be a complete, oriented stationary surface
given by \eqref{x} in terms of $\phi,\psi,\mathrm{d}h$
which are meromorphic functions/forms defined
on a compact Riemann surface $\overline{M}$ (i.e., it is of algebraic type).
It may have good singular ends, but no bad singular ends.
Then the total Gaussian curvature and total normal curvature are related with the indices at the ends $p_j$ (singular or regular) by the following formulas:
\begin{align}
\int_M K^{\perp}\mathrm{d}M &=0~;\label{eq-deg0}\\
\int_M K\mathrm{d}M &=-4\pi \deg\phi+2\pi\sum{_j} \left(\mathrm{ind}^{+}_{p_j}(\phi-\bar{\psi})
+\mathrm{ind}_{p_j}(\phi-\bar{\psi})\right) \label{eq-deg1}\\
&=-4\pi \deg\psi+2\pi\sum{_j} \left(\mathrm{ind}^{+}_{p_j}(\phi-\bar{\psi})
-\mathrm{ind}_{p_j}(\phi-\bar{\psi})\right)~.\label{eq-deg2}
\end{align}
From \eqref{eq-deg1} \eqref{eq-deg2} we have equivalent identities:
\begin{gather}
\sum{_j}\mathrm{ind}_{p_j}(\phi-\bar{\psi})=\deg\phi-\deg\psi~.\label{eq-deg3}\\
\int_M K\mathrm{d}M =-2\pi \left(\deg\phi+ \deg\psi-\sum{_j} \mathrm{ind}^{+}_{p_j}(\phi-\bar{\psi})\right)~.\label{eq-deg4}
\end{gather}
\end{theorem}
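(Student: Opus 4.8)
The plan is to compute the total curvature integral $\int_M(-K+\mathrm{i}K^\perp)\mathrm{d}M$ by means of the boundary formula \eqref{eq-totalcurvature}, reducing the problem to a sum of local contour integrals that have already been evaluated. The starting observation is that \eqref{eq-totalcurvature} expresses $\int_M(-K+\mathrm{i}K^\perp)\mathrm{d}M$ as a contour integral of $\frac{\phi_z}{\phi-\bar\psi}\mathrm{d}z$ (equivalently $d\ln(\phi-\bar\psi)$) over $\partial M$, and the absence of bad singular ends guarantees, by Proposition~\ref{prop-goodsingular}, that the improper integral over the noncompact $M$ converges absolutely with a well-defined limit independent of the exhausting domains. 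So I would first exhaust $M$ by compact surfaces with boundary, writing $\overline M=M\cup\{p_1,\dots,p_r\}$ and puncturing small disks $D_{p_j}$ around each end, so that $\partial M=-\bigcup_j\partial D_{p_j}$ in the limit.

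Next I would apply Stokes' theorem on $\overline M$ minus the punctures around the ends \emph{and} minus the punctures around the poles of $\phi,\psi$ that lie in the interior. Since $d\ln(\phi-\bar\psi)$ is exact away from these finitely many distinguished points, its total boundary integral vanishes, so the sum of local residue-type contributions over all of them must be zero. This is exactly the bookkeeping already carried out in Proposition~\ref{prop-index}. The local contributions come in three flavors, all computed earlier: at a good singular end $p_j$, Lemma~\ref{lemma-index2} gives $\oint_{\partial D_{p_j}}\frac{\phi_z}{\phi-\bar\psi}\mathrm{d}z$ equal to $2\pi\mathrm{i}\,m$ (when $m<n$) or $0$ (when $m>n$), and symmetrically for the $\bar\psi_{\bar z}$ integral; at an interior pole of $\phi$ of order $k$, Lemma~\ref{lem-pole} gives $-2\pi\mathrm{i}\,k$ for the $\phi_z$ integral and $0$ for the $\bar\psi_{\bar z}$ integral (with opposite sign for a pole of $\psi$); at a regular end the contribution is $0$. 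Summing the $\phi_z$-version over all ends and all poles, and recognizing that the total contribution of the poles of $\phi$ is $-2\pi\mathrm{i}\deg\phi$, yields
\[
\int_M(-K+\mathrm{i}K^\perp)\mathrm{d}M
=-2\mathrm{i}\Big(-2\pi\mathrm{i}\deg\phi+2\pi\mathrm{i}\!\sum_{m<n}m\Big),
\]
and using $\mathrm{ind}_{p_j}^+ + \mathrm{ind}_{p_j}=2m$ at ends with $m<n$ (and $=0$ when $m>n$) converts $\sum_{m<n}m$ into $\tfrac12\sum_j(\mathrm{ind}^+_{p_j}+\mathrm{ind}_{p_j})$, giving \eqref{eq-deg1}. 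The parallel computation with the $\bar\psi_{\bar z}$-version produces \eqref{eq-deg2} with $\deg\psi$ and the opposite index sign.

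Finally, the purely imaginary part of the left-hand side is $\int_M K^\perp\mathrm{d}M$, and since every local contribution I sum is a real multiple of $2\pi\mathrm{i}$, the full quantity $\int_M(-K+\mathrm{i}K^\perp)\mathrm{d}M$ is \emph{real}; this immediately yields \eqref{eq-deg0}. The identities \eqref{eq-deg3} and \eqref{eq-deg4} then follow by purely algebraic manipulation: subtracting \eqref{eq-deg1} from \eqref{eq-deg2} cancels the $\mathrm{ind}^+$ terms and gives \eqref{eq-deg3}, while adding them and dividing by two gives \eqref{eq-deg4}. \textbf{The main obstacle} I anticipate is the careful justification that the improper integral over the noncompact end actually equals the limit of the boundary contour integrals—that is, that one may legitimately pass to the limit $D_{p_j}\to\{p_j\}$ and interchange it with the integration. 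This is precisely where the hypothesis \emph{no bad singular ends} is essential: Proposition~\ref{prop-goodsingular} supplies the absolute convergence and the existence of the limit, so the delicate analytic step is already isolated there, and the remainder is an organized residue count.
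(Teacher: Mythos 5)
Your proposal follows the paper's proof essentially verbatim: excise small disks around the ends and around the interior poles of $\phi$ (resp.\ $\psi$), apply Stokes via \eqref{eq-totalcurvature} using the absolute convergence from Proposition~\ref{prop-goodsingular}, evaluate the local contour integrals by Lemma~\ref{lemma-index2} and Lemma~\ref{lem-pole}, and sum; the reality of every local contribution gives \eqref{eq-deg0}, and taking the sum and difference of \eqref{eq-deg1} and \eqref{eq-deg2} gives \eqref{eq-deg4} and \eqref{eq-deg3}. The one blemish is a sign slip in your displayed formula: since $\partial M=-\bigcup_j\partial D_{p_j}$, the prefactor multiplying the sum of the counterclockwise local contour integrals is $+2\mathrm{i}$ (as in the paper's computation), not $-2\mathrm{i}$; taken literally your display would yield $\int_M K\,\mathrm{d}M=+4\pi\deg\phi-2\pi\sum_j\left(\mathrm{ind}^{+}_{p_j}+\mathrm{ind}_{p_j}\right)$, the negative of \eqref{eq-deg1}.
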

\begin{proof}
Without loss of generality, assume that the meromorphic functions $\phi$ on $\overline{M}$ have poles
$\{p_{r+1},\cdots,p_m\}$ which are distinct from the ends $\{p_1,\cdots,p_r\}$. Take disjoint neighborhoods of
$p_1,\cdots,p_r,p_{r+1},\cdots,p_m$ respectively
and denote them by
\[
D_1,\cdots,D_r,D_{r+1},\cdots,D_m.
\]
By assumption, the ends are regular ends or good singular ends,
around which the curvature integral must converge absolutely
(Proposition~\ref{prop-goodsingular}). Hence the limit
\[
\lim_{D_j\to\{p_j\}}\int_{\overline{M}-\bigcup D_j}
(-K+\mathrm{i}K^{\perp})\mathrm{d}M
\]
is well-defined and independent of the limit process $D_j\to\{p_j\}$
for all $1\le j\le m$. Apply Stokes theorem to $\overline{M}-\bigcup_{j=1}^m D_j$
and invoke \eqref{eq-totalcurvature}. We obtain
\begin{align*}
\int_M(-K+\mathrm{i}K^{\perp})\mathrm{d}M
&=2\mathrm{i} \lim_{D_j\to\{p_j\}}\int_{\overline{M}-\bigcup D_j}
\frac{\phi_z\bar{\psi}_{\bar{z}}}{(\phi-\bar{\psi})^2}
\mathrm{d}z\wedge \mathrm{d}\bar{z}
\\
&=2\mathrm{i}\sum_{j=1}^m\lim_{D_j\to\{p_j\}}
\int_{\partial D_j} \frac{\phi_z}{\phi-\bar{\psi}} \mathrm{d}z~.
\end{align*}
The singularities of the 1-form
$\frac{\phi_z}{\phi-\bar{\psi}}\mathrm{d}z$
come from either the zeros of $\phi-\bar\psi$
when $j\le r$, or the poles of $\phi$ when $j\ge r+1$.
In the first case, by Lemma~\ref{lemma-index2} the limit
for each $j=1,\cdots,r$ is either $\mathrm{ind}(\phi-\bar\psi)$ or zero,
depending on whether the index is positive or negative.
In the second case, the limit is the order of the pole of $\phi$
by Lemma~\ref{lem-pole}.
Taking sum we get \eqref{eq-deg0} and \eqref{eq-deg1}.

\eqref{eq-deg2} is derived in a similar fashion by considering $\frac{\bar{\psi}_{\bar{z}}}{\phi-\bar{\psi}} \mathrm{d}\bar{z}$
instead of $\frac{\phi_z}{\phi-\bar{\psi}} \mathrm{d}z$ when using Stokes theorem.
Taking the sum or the difference of \eqref{eq-deg1} and \eqref{eq-deg2}, we obtain \eqref{eq-deg4} and \eqref{eq-deg3}, respectively.
\end{proof}
\begin{remark}
When there are no singular ends, all indices vanish and
we obtain a simplified version of these formulas:
\[
\int_M K\mathrm{d}M =-4\pi \deg\phi=-4\pi \deg\psi.
\]
The reason for $\deg\phi=\deg\psi$ has been explained in the comments
following Corollary~\ref{cor-deg}. In particular, for $M\to \mathbb{R}^3$,
$\phi=-1/\psi$, which is never equal to $\bar\psi$. Hence we get the classical result.
\end{remark}
\begin{remark}\label{rem-normalK}
On the other hand, even in the codimension-2 case, under our hypothesis there is still
\[
\int_M K^{\perp}\mathrm{d}M =0.
\]
This looks peculiar, since one expects to see something
related with the Euler characteristic of the normal bundle.
A natural explanation is as below.

First observe that we may add the \emph{light cone at infinity} to the ambient space $\mathbb{R}^4_1$
and get a conformal compact Lorentz manifold $Q^4_1$ \cite{Ma-Wang}.
This is the same as one did in M\"obius geometry where the
compactification $\mathbb{R}^n\cup\{\infty\}=S^n$ is obtained
via an inversion or the stereographic projection.
Put it rigorously, we may use the classical construction of
\emph{light cone model}. See \cite{Ma-Wang} for details of
Lorentzian conformal geometry, where $Q^4_1$ is identified
with the set of null lines in $\mathbb{R}^6_2$,
which is topologically $S^3\times S^1/\{\pm 1\}$ endowed with
the Lorentz metric $g_{S^3}\oplus (-g_{S^1})$.

Next, a stationary surface satisfying the assumption of
Theorem~\ref{GB2} indeed could be compactified in $Q^4_1$.
This gives a natural realization and visualization of
Huber's compactification $M=\overline{M}\backslash\{p_1\cdots,p_r\}$.
The extended map
\[\overline{M}\to Q^4_1\]
is generally not $C^{\infty}$, but $C^{1,\alpha}$ \cite{Kusner}
(possibly branched at those
$p_j$ located on the light cone at infinity). The non-existence of
\emph{bad singular ends} should guarantee that the normal bundle
has a nice extension to these ends.

Finally, $Q^4_1=S^3\times S^1/\{\pm 1\}$ has a globally defined
time-like vector field. Restricting to any spacelike surface $M\to
Q^4_1$ and considering the projection of this vector field to its
normal plane at each point, we obtain a time-like (non-zero) global
section of the rank-2 normal bundle, which shows that the
normal bundle is trivial.
\end{remark}
\begin{remark}
It is interesting to compare with similar formulas for complete minimal surfaces in $\mathbb{R}^4$ with finite total curvature \cite{Hof-Oss}:
\begin{equation}\label{eq-deg5}
\int_M K\mathrm{d}M =-2\pi(\deg\phi+\deg\psi),~~
\int_M K^{\perp}\mathrm{d}M =2\pi(\deg\phi-\deg\psi).
\end{equation}
Here $\phi,\psi$ correspond to the two components of the
Gauss map into the Grassmannian
\[
\mathrm{G}(2,2)=\{\text{oriented 2-planes in}~\mathbb{R}^4\}=S^2\times S^2.
\]
Note that $\deg\phi,\deg\psi$ are not necessarily equal to each other, and the normal bundle is non-trivial in general. These formulas were obtained earlier by Chern and Spanier \cite{Chern-Spanier} for closed oriented surfaces in $\mathbb{R}^4$.
\end{remark}

\begin{definition}\label{def-multiplicity}
The multiplicity of a regular or singular end $p_j$ for a stationary surface
in $\mathbb{R}^4_1$ is defined to be
\[
\widetilde{d}_j=d_j-\mathrm{ind}^+_{p_j},
\]
where $d_j+1$ is equal to the order of the pole of
the vector-valued meromorphic $1$-form $\vec{x}_z \mathrm{d}z$ at $p_j$.
\end{definition}

At a regular end $p_j$, $\mathrm{ind}^+(p_j)=0$. So our definition of
the multiplicity of an end coincides with the old one for minimal $M^2\to\mathbb{R}^3$.

As a corollary of Theorem~\ref{GB2} and
Definition~\ref{def-multiplicity} we obtain
the following Gauss-Bonnet type theorem.
\begin{theorem}
[the generalized Jorge-Meeks formula]
\label{GB3}
Given an algebraic stationary surface $\vec{x}:M\rightarrow \mathbb{R}^4_1$
whose ends $\{p_1,\cdots,p_r\}=\overline{M}-M$ are regular ends or good singular ends.
Let $g$ be the genus of compact Riemann surface $\overline{M}$,
$r$ the number of ends, and $d_j$ the multiplicity of $p_j$. We have
\begin{equation}\label{eq-jorgemeeks2}
\int_M
K\mathrm{d}M=2\pi\left(2-2g-r-\sum_{j=1}^r \widetilde{d}_j\right)~, ~~\int_M K^{\perp}\mathrm{d}M =0~.
\end{equation}
\end{theorem}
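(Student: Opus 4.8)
The plan is to deduce everything from Theorem~\ref{GB2} together with one purely algebraic identity on $\overline{M}$. The normal-curvature statement $\int_M K^{\perp}\mathrm{d}M=0$ requires no new work, being exactly \eqref{eq-deg0}. For the Gaussian curvature I would start from the form \eqref{eq-deg4} of Theorem~\ref{GB2}, namely $\int_M K\mathrm{d}M=-2\pi(\deg\phi+\deg\psi-\sum_j\mathrm{ind}^{+}_{p_j})$, and unwind the definition $\widetilde{d}_j=d_j-\mathrm{ind}^{+}_{p_j}$ from Definition~\ref{def-multiplicity}. Comparing with the target \eqref{eq-jorgemeeks2}, the index terms will cancel and the whole theorem reduces to the single identity
\begin{equation*}
\deg\phi+\deg\psi=2g-2+r+\sum_{j=1}^r d_j.
\end{equation*}

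To prove this identity I would count the divisor of the height differential $\mathrm{d}h$, a meromorphic $1$-form on the compact surface $\overline{M}$, whose divisor therefore has degree $2g-2$. The first step is a normalization: using the Lorentz isometries of Remark~\ref{rem-trans}, which act on $\phi,\psi$ by fractional linear transformations and on $\vec{x}_z\mathrm{d}z$ by a fixed linear map of $\mathbb{C}^4$, I would arrange that no end $p_j$ is a pole of $\phi$ or $\psi$. This move leaves $\deg\phi$, $\deg\psi$, $g$, $r$ and each pole order $d_j+1$ of $\vec{x}_z\mathrm{d}z$ unchanged, so it costs nothing but makes the bookkeeping clean.

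With this normalization the count splits into interior points and ends. The Weierstrass vector $V=(\phi+\psi,-\mathrm{i}(\phi-\psi),1-\phi\psi,1+\phi\psi)$ never vanishes (its vanishing would force $1=0$) and acquires a pole exactly at the poles of $\phi$ or $\psi$, of the same order. Since the surface is a regular immersion on $M$, the form $\vec{x}_z\mathrm{d}z=V\,\mathrm{d}h$ is holomorphic and nowhere zero there; combined with regularity condition 2) of Theorem~\ref{thm-period} this forces $\mathrm{d}h$ to have zeros precisely at the poles of $\phi$ and of $\psi$, with matching orders and no other interior zeros or poles, contributing $\deg\phi+\deg\psi$ to $\deg(\mathrm{d}h)$. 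At each end $\phi,\psi$ are now finite, so $V$ is finite and nonzero and $\mathrm{ord}_{p_j}(\mathrm{d}h)=\mathrm{ord}_{p_j}(\vec{x}_z\mathrm{d}z)=-(d_j+1)$, contributing $-\sum_j(d_j+1)=-\sum_j d_j-r$. Adding the two contributions and setting the total equal to $2g-2$ yields the identity, and substituting it into \eqref{eq-deg4} gives the theorem.

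The hard part is the divisor identity, and within it the end contribution. The delicate point is that without the normalization the vector $V$ itself contributes poles at the ends (as already happens for the classical catenoid, where $\psi$ has a pole at an end), so the clean relation $\mathrm{ord}_{p_j}(\mathrm{d}h)=-(d_j+1)$ genuinely fails; the Lorentz-invariance of $d_j$ is exactly what lets me dispose of this. I would also need to confirm that there are no interior branch points, so that $\vec{x}_z\mathrm{d}z$ is nonvanishing on $M$, and that the leading terms of $V$ never cancel at poles of $\phi$ or $\psi$; both follow from $V\neq 0$ and the immersion hypothesis.
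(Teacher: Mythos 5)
Your proposal is correct and follows essentially the same route as the paper: normalize by a Lorentz rotation so that $\phi,\psi$ have no poles at the ends, count the zeros of $\mathrm{d}h$ (at poles of $\phi,\psi$, totalling $\deg\phi+\deg\psi$) and its poles (at the ends, totalling $\sum_j(d_j+1)$), equate the divisor degree to $2g-2$, and combine with \eqref{eq-deg4}. The extra care you take in justifying that $V$ never vanishes and that the normalization preserves $d_j$ fills in details the paper leaves implicit, but the argument is the same.
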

\begin{proof}
Up to a Lorentz rotation \eqref{trans} we may assume that $\phi,\psi$
do not have poles at ends $\{p_1,\cdots,p_r\}$.
Thus these ends are exactly the poles of $\mathrm{d}h$. So
\[
\sum {\rm pole}(\mathrm{d}h)=\sum_{j=1}^r(d_j+1)=
\sum_{j=1}^r\widetilde{d}_j+\sum_{j=1}^r \mathrm{ind}^+ +r.
\]
On the other hand, zeros of $\mathrm{d}h$ should be regular points of
$\vec{x}:M\to \mathbb{R}^4_1$, hence corresponds precisely to
poles of $\phi$ or $\psi$ with the same order. Taking sum we get
\[
\sum {\rm zero}(\mathrm{d}h)=\deg\phi+\deg\psi.
\]
By the well-known formula for the meromorphic $1$-form $\mathrm{d}h$
counting its zeros and poles over a compact Riemann surface $\overline{M}$:
\[
\sum {\rm zero}(\mathrm{d}h)-\sum {\rm pole}(\mathrm{d}h)=-\chi(\overline{M})=2g-2.
\]
Together with $\int_M K\mathrm{d}M =-2\pi \left(\deg\phi+ \deg\psi-
\sum{_j} \mathrm{ind}^{+}\right)$ (\eqref{eq-deg4} in Theorem~\ref{GB2}),
the first formula is proved.
The second one is just \eqref{eq-deg0} in Theorem~\ref{GB2}.
\end{proof}
\begin{remark}
It is interesting to compare with Kusner's version of
Gauss-Bonnet formula involving the total branching order
(Lemma~1.2. in \cite{Kusner}).
Suppose $M\to \mathbb{R}^3$ is a branched immersion with
a $C^{1,\alpha}$ compactification $\overline{M}\to S^3$,
then Kusner says
\[\int_M K \mathrm{d}M=2\pi(\chi(M)-\eta(M)+\beta(M)).\]
In our terms, $\chi(M)=2-2g-r$ is the Euler number of $M$,
$\eta(M)=\sum_{j=1}^r d_j$ is the sum of multiplicities of the ends,
and $\beta(M)$ the total branching order. To relate with our result,
for a good singular end $p$, without loss of generality we may write
a local Weierstrass representation over a punctuated neighborhood of
$z=0$:
\begin{equation}\label{eq-sing-end}
\phi=z^k \phi_0,~\psi=z^{k+l}\psi_0,~\mathrm{d}h=z^{-n}\mathrm{d}z,
\end{equation}
where $n,k,l$ are integers ($l>0$), $\phi_0,\psi_0$ are non-zero
holomorphic functions around $z=0$.

If $k=0, n>0$, we have a regular end where $\vec{x}_z \mathrm{d}z$ has a
$n$-th order pole, hence the multiplicity at this end is $n-1$.

If $k>0, n=0$, we have a good branch point with branching order $k$
which equals $|\mathrm{ind}(\phi-\bar\psi)|=\mathrm{ind}^+(\phi-\bar\psi)$.
Note that $\mathrm{d}s=|\phi-\bar\psi||\mathrm{d}h|=(|z|^k+o(|z|^k))|\mathrm{d}z|$ in this case,
which fails to hold for a singular point/end ($l=0$).

When $n,k,l$ are all positive, it is natural to count both
contributions coming from the poles of $\mathrm{d}h$ and from the zeros
(branching orders) of $\phi-\bar\psi$. This justifies
Definition~\ref{def-multiplicity} and Theorem~\ref{GB3}. (For more
on Gauss-Bonnet theorem involving branch points, see
\cite{Eschenburg} and references therein.)
\end{remark}
\begin{proposition}\label{ineq-multiplicity}
Let $\vec{x}:D^2-\{0\}\rightarrow \mathbb{R}^4_1$ be a regular or a good
singular end which is further assumed to be complete at $z=0$. Then
its multiplicity satisfies $\widetilde{d}\ge 1.$
\end{proposition}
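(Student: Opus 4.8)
The plan is to reduce everything to a local computation near the end $z=0$ and to show that $\widetilde d=d-\mathrm{ind}^+$ is controlled by the pole order of $\mathrm{d}h$, with completeness forcing $\widetilde d\ge 0$ and the period condition \eqref{eq-period} ruling out the borderline value $\widetilde d=0$. First I would use a Lorentz isometry (via \eqref{trans} and the observation preceding this proposition) to normalize the end so that neither $\phi$ nor $\psi$ has a pole at $z=0$ and, in the singular case, $\phi(0)=\bar\psi(0)=0$; this changes neither $\widetilde d$ nor the completeness of the end, since $\mathrm{ind}$ is invariant under \eqref{trans}. Then I would write the normalized local Weierstrass data as in \eqref{eq-sing-end},
\[
\phi=z^k\phi_0,\qquad \psi=z^{k+l}\psi_0,\qquad \mathrm{d}h=z^{-n}h_0\,\mathrm{d}z,
\]
with $\phi_0(0),\psi_0(0),h_0(0)\neq 0$, where $k\ge 1$, $l>0$ for a good singular end and $k=0$, $\phi(0)\neq\overline{\psi(0)}$ for a regular end. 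The opposite ordering of the vanishing orders of $\phi$ and $\psi$ is handled symmetrically by exchanging their roles (replacing \eqref{eq-deg1} by \eqref{eq-deg2} in spirit), so I treat only $\mathrm{ord}_0\phi\le\mathrm{ord}_0\psi$.

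Next I would read off the three integers entering $\widetilde d$. Since the vector in \eqref{xz} tends to $(0,0,1,1)\neq\vec 0$ as $z\to 0$, the form $\vec{x}_z\,\mathrm{d}h$ has a pole of order exactly $n$, whence $d=n-1$; and by Lemma~\ref{lemma-index2} we have $\mathrm{ind}^{+}_0(\phi-\bar\psi)=k$ (with $k=0$ at a regular end). Therefore $\widetilde d=(n-1)-k=(n-k)-1$. From \eqref{eq-metric} the metric behaves like $\mathrm{d}s\sim C\,|z|^{\,k-n}|\mathrm{d}z|$ near $0$ (using that $\phi-\bar\psi\sim\phi_0(0)z^{k}$ dominates $\bar z^{\,k+l}\bar\psi_0$), so completeness of the end is equivalent to divergence of $\int_0 r^{\,k-n}\,\mathrm{d}r$, i.e. to $n-k\ge 1$. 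This already yields $\widetilde d\ge 0$, and the whole content of the proposition is to exclude the borderline equality $\widetilde d=0$.

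The crux is therefore to show that $\widetilde d=0$ violates the period condition \eqref{eq-period}. At a good singular end, $\widetilde d=0$ means $n=k+1$, so that $\phi\,\mathrm{d}h=z^{-1}\phi_0 h_0\,\mathrm{d}z$ has nonzero residue $\phi_0(0)h_0(0)$, while $\psi\,\mathrm{d}h=z^{\,l-1}\psi_0 h_0\,\mathrm{d}z$ is holomorphic ($l\ge 1$) and hence has zero residue; the first identity in \eqref{eq-period} then reads $2\pi\mathrm{i}\,\phi_0(0)h_0(0)=-\overline{0}=0$, a contradiction. At a regular end, $\widetilde d=0$ means $n=1$; here $\mathrm{Re}\oint\mathrm{d}h=0$ forces $h_0(0)\in\mathbb{R}$, and then $\oint\phi\,\mathrm{d}h=-\overline{\oint\psi\,\mathrm{d}h}$ reduces to $\phi(0)h_0(0)=\overline{\psi(0)}\,h_0(0)$, i.e. $\phi(0)=\overline{\psi(0)}$, contradicting regularity. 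In both cases $\widetilde d\ge 1$.

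The step I expect to be the main obstacle is the borderline residue analysis: one must track the complex conjugation in the identity $\oint\phi\,\mathrm{d}h=-\overline{\oint\psi\,\mathrm{d}h}$ carefully and confirm that the two subcases (regular versus good singular) each produce a genuine contradiction rather than an identity that can be satisfied. The routine asymptotic estimates for the metric and the pole order of $\vec{x}_z\,\mathrm{d}h$ are straightforward once the normalization is fixed.
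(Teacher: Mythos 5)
Your proof is correct and follows essentially the same route as the paper: normalize to the local form \eqref{eq-sing-end}, read off $d=n-1$ and $\mathrm{ind}^+=k$, use completeness of the metric $|z|^{k-n}|\mathrm{d}z|$ to get $n-k\ge 1$, and invoke the period condition to exclude $n-k=1$. The only difference is that you spell out the residue computation behind the step ``the period condition excludes $n-k=1$'' (and treat the regular-end case explicitly), which the paper leaves implicit.
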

\begin{proof}
We need only to consider a good singular end with a local
Weierstrass representation \eqref{eq-sing-end} and
positive $n,k,l$ as above. By definition, the multiplicity of this end is $d=n-1$,
and the index of singular end is $\mathrm{ind}^+=k$.
The metric $\mathrm{d}s=|\phi-\bar\psi||\mathrm{d}h|=(|z|^{k-n}+o(|z|^{k-n}))|\mathrm{d}z|$
is complete around $z=0$.
This implies $n-k\ge 1$. The period condition \eqref{eq-period}
excludes the possibility of $n-k=1$. Hence $n-k\ge 2$ and we have
$\widetilde{d}=d-k=n-1-k\ge 1. $
\end{proof}
As a direct consequence of this proposition and Theorem~\ref{GB3}, we obtain
\begin{corollary}
[The Chern-Osserman type inequality]
\label{GB2C}
Let $\vec{x}:M\rightarrow \mathbb{R}^4_1$ be an algebraic stationary surface without
bad singular ends, $\overline{M}=M\cup\{q_1,\cdots,q_r\}$, then
\begin{equation}
\int K\mathrm{d}M \leq 2\pi(\chi(M)-r)=4\pi(1-g-r).
\end{equation}
\end{corollary}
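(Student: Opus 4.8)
The plan is to derive this inequality directly from the generalized Jorge-Meeks formula of Theorem~\ref{GB3} by bounding the end multiplicities from below. First I would record the purely topological identity coming from the compactification: since $\overline{M}$ is a compact Riemann surface of genus $g$ and $M=\overline{M}\setminus\{q_1,\cdots,q_r\}$ is obtained by deleting $r$ points, the Euler characteristic satisfies $\chi(M)=\chi(\overline{M})-r=(2-2g)-r$. Consequently the two expressions on the right-hand side of the asserted inequality coincide, namely $2\pi(\chi(M)-r)=2\pi(2-2g-2r)=4\pi(1-g-r)$, so it suffices to prove $\int_M K\,\mathrm{d}M\le 2\pi(2-2g-2r)$.

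Next I would invoke Theorem~\ref{GB3}. Because $\vec{x}:M\to\mathbb{R}^4_1$ is by hypothesis an algebraic stationary surface with only regular or good singular ends and no bad singular ends, the generalized Jorge-Meeks formula applies and gives
\[
\int_M K\,\mathrm{d}M = 2\pi\Big(2-2g-r-\sum_{j=1}^r \widetilde{d}_j\Big),
\]
together with $\int_M K^\perp\,\mathrm{d}M=0$. At this point the entire problem is reduced to estimating the sum $\sum_{j=1}^r\widetilde{d}_j$ of the end multiplicities.

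The key input is Proposition~\ref{ineq-multiplicity}. Since $M$ carries a complete metric, each annular end $\vec{x}:D^2-\{0\}\to\mathbb{R}^4_1$ is complete at its puncture, so the proposition applies at every end and yields $\widetilde{d}_j\ge 1$ for $j=1,\cdots,r$. Summing gives $\sum_{j=1}^r\widetilde{d}_j\ge r$, and substituting this bound into the displayed formula produces
\[
\int_M K\,\mathrm{d}M \le 2\pi(2-2g-r-r)=4\pi(1-g-r),
\]
which is precisely the Chern-Osserman type inequality.

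I do not expect any serious obstacle once Theorem~\ref{GB3} and Proposition~\ref{ineq-multiplicity} are available; the whole argument is an assembly of these two results plus the Euler-characteristic bookkeeping. The single point that warrants care is verifying that global completeness of $M$ descends to completeness of each individual end at its puncture, which is exactly the hypothesis under which Proposition~\ref{ineq-multiplicity} was proved. One also notes that the inequality is sharp exactly when every end attains the minimal multiplicity $\widetilde{d}_j=1$.
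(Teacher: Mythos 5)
Your proof is correct and follows exactly the paper's route: the corollary is stated there as a direct consequence of Theorem~\ref{GB3} combined with the bound $\widetilde{d}_j\ge 1$ from Proposition~\ref{ineq-multiplicity}, plus the Euler-characteristic computation $\chi(M)=2-2g-r$. Your added remark about checking that completeness of $M$ gives completeness at each puncture is a sensible point of care, but otherwise there is nothing to add.
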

\begin{corollary}
[Quantization of total Gaussian curvature]
\label{cor-quantization}
Under the same assumptions of Theorem~\ref{GB3},
when $\phi,\psi$ are not constants (equivalently, when $\vec{x}$ is not a
flat surface in $\mathbb{R}^3_0$), we always have
\[
\int_M K\mathrm{d}M=-4\pi k,
\]
where $k\ge 1$ is a positive integer.
\end{corollary}
\begin{proof}
This follows directly from \eqref{eq-deg1}\eqref{eq-deg2} except the inequality
$\int_M K\mathrm{d}M\le -4\pi$.

Suppose this is not true and $\int_M K\mathrm{d}M\ge 0$ for
a complete stationary surface. By Corollary~\ref{GB2C}, this implies $g+r\le 1$.
There must be at least one end. So $g=0$ and $r=1$. If this unique end is regular,
by \eqref{eq-deg1} we obtain contradiction since $\deg\phi\ge 1$. So this end is a good singular end.
Without loss of generality we may assume that the index at this end is negative.
But again by \eqref{eq-deg1} we have the same contradiction.
\end{proof}

\section{Typical methods to construct new examples}

In this section and below we will construct many new complete algebraic
stationary surfaces $\vec{x}:M\to\mathbb{R}^4_1$ which are topologically
punctuated 2-spheres (genus zero) with finite total curvature.

To find a new stationary surface $\vec{x}:M\to\mathbb{R}^4_1$, the typical method we used is to find out the Weierstrass data according the prescribed information. Specifically, one may choose either of methods below, or combine some of them:

(1) Write out the vector-valued differential $\vec{x}_z \mathrm{d}z$ directly
with prescribed poles or Laurent expansions so that
it has a desired behavior locally (around an end) or globally
(like being a graph over a plane).

(2) Deform the expression of $\vec{x}_z \mathrm{d}z$ or the Weierstrass data
of a known minimal surface in $\mathbb{R}^3$ to obtain new examples.

(3) Using geometric conditions to determine the divisors (the distribution of zeros
and poles) of the Gauss maps $\phi,\psi$ and height differential $\mathrm{d}h$. Then write out the algebraic expressions explicitly and find out the range of the parameters by solving
the regularity condition and the period condition.

\begin{remark}\label{rem-phipsibar}
To construct a regular complete
stationary surface, usually we find out a pair of meromorphic
functions $\phi,\psi$ on a Riemann surface $M$ as candidates of the Gauss maps; then we have to verify that
\[\phi(z)=\bar\psi(z)\]
has no solution $z\in M$. It is quite often that $\phi=\psi(z,t),\psi=\psi(z,t)$ depending on a multi-parameter $t$, and we have to find out the domain $\Omega$ so that when
$t\in \Omega$ the equation $\phi(z,t)=\bar\psi(z,t)$ has no solution $z\in M$.
This type of equation is quite unusual
to the knowledge of the authors. Most of the time we have to
deal with this problem by handwork.
See discussions in the proof to Theorem~\ref{thm-catenoid} and Theorem~\ref{thm-enneper} for example.
Note that $M\to\mathbb{R}^3$ is a rare case where we overcome
this difficulty easily since $\phi=-1/\psi$ will never equal to $\bar\psi$.
\end{remark}

Compared with the case $\mathbb{R}^3$, another difference is that in $\mathbb{R}^4_1$ embedded complete examples are abundant as shown below.

\subsection{Method 1: Prescribing $\vec{x}_z \mathrm{d}z$ with $\langle \vec{x}_z,\vec{x}_z\rangle=0$}

Here we demonstrate this method by constructing stationary graphs over a 2-dimensional plane in $\mathbb{R}^4_1$.

\begin{example}[The graph of a harmonic function]
\label{exa-graph0}
Let $\mathbb{R}^3_0$ denote the 3-dimensional subspace
orthogonal to $(0,0,1,1)$; it is endowed with a degenerate metric. A stationary surface in $\mathbb{R}^3_0$ can always be written as
\begin{equation}
\vec{x}=(u,v,f(u,v),f(u,v)),
\end{equation}
where $f(u,v)$ is a harmonic function of the variables $(u,v)$.
Taking $F(z)=F(u+iv)$ as an entire function on the whole $z$-plane and let $f=\mathrm{Re}(F)$; this will produce a complete graph over
$(u,v)$-plane with flat metric. These are simplest new examples which are completely embedded in $\mathbb{R}^4_1$ with finite total curvature $\int K\mathrm{d}M=\int K^{\perp}\mathrm{d}M=0$.
\end{example}

\begin{example}[A complete graph over $\mathbb{R}^2$]
\label{exa-graph1} Let
\begin{equation}\label{eq-graph1}
\vec{x}_z\mathrm{d}z=\Big(1,\sqrt{2}\mathrm{i},\cosh(z),\sinh(z)\Big)\mathrm{d}z
\end{equation}
which satisfies
\[\langle \vec{x}_z,\vec{x}_z\rangle=0,~~
|\vec{x}_z|^2=3+|\cosh^2(z)|-|\sinh^2(z)|\in[2,4]\subset\mathbb{R}.
\]
It defines a complete stationary graph $\vec{x}:\mathbb{C}\to
\mathbb{R}^4_1$:
\[
\vec{x}(u,v)=2\Big(u,-\sqrt{2}v,\sinh(u)\cos(v),\cosh(u)\cos(v)\Big).
~~~(z=u+\mathrm{i}v,~u,v\in\mathbb{R})
\]
It is singly periodical (with respect to $v$) and not flat.
So the total curvature does not converge absolutely.
Related with this fact, its Weierstrass data is
\[
\phi=(1-\sqrt{2})\mathrm{e}^{-z},~\psi=(1+\sqrt{2})\mathrm{e}^{-z},
~\mathrm{d}h=\frac{1}{2}\mathrm{e}^z.
\]
Each of them has an essential singularity at the end $z=\infty$.
Also note $\phi\ne\bar\psi$.
\end{example}
\begin{remark}\label{rem-embed1}
In contrast with Meeks-Rosenberg's uniqueness theorem of the
helicoid in $\mathbb{R}^3$ \cite{Meeks2}, these examples in
$\mathbb{R}^4_1$ are also complete, simply connected,
and properly embedded. In Proposition~\ref{prop-enneper} we will
see that the Enneper surface could be deformed in $\mathbb{R}^4_1$ to satisfy these conditions. Together with the generalized helicoid in Example~\ref{exa-helicoid} there are lots of such examples.
\end{remark}

To find embedded minimal surfaces in $\mathbb{R}^3$, a basic result \cite{Schoen}
says that any embedded minimal end of finite total curvature must be either a catenoidal end
(asymptotic to a half-catenoid) or a planar end (asymptotic to a plane);
one annular end with finite total curvature is embedded if and only if
the multiplicity is $1$.

In $\mathbb{R}^4_1$ there is much more freedom to construct embedded ends.
The next family of examples shows that the multiplicity of such an end can take the value of any positive integer. In particular these examples show that Schoen's characterization of the catenoid does not generalize to $\mathbb{R}^4_1$ (see the discussion in the introduction).
\begin{example}
[A complete graph over a punctured timelike plane]
\label{exa-graph2}
Write
\begin{equation}\label{eq-graph2}
\vec{x}_z\mathrm{d}z=\left(z^n+\frac{1}{z^n},-\mathrm{i}\big(z^n-\frac{1}{z^n}\big),
\sqrt{3}\mathrm{i},1\right)\mathrm{d}z.~~~n\in\mathbb{Z}, n\ge 2.
\end{equation}
As the previous example, this is embedded as a graph over the $(x_3,x_4)$ plane
(yet punctuated at $(0,0)$) with two ends $z=0,\infty$. Since
\[|\vec{x}_z|^2=2(|z|^n+\frac{1}{|z|^n}+1)\ge 6,\]
this surface is complete and regular. When
$z\to 0$, $(x_1,x_2)\to\infty, (x_3,x_4)\to(0,0)$,
so $z=0$ is a planar end with an asymptotic plane.
$\vec{x}_z\mathrm{d}z$ has a pole of order $n$ at $z=0$, so the multiplicity of this
embedded planar end is $d_0=n-1$.
Similarly the end $z=\infty$ is embedded with multiplicity $d_{\infty}=n+1$. The total Gaussian curvature $-\int K\mathrm{d}M=4\pi n$ is finite due to Theorem~\ref{GB3}.
For the reader's convenience we give
\[
\phi=\bar\lambda\cdot z^n,~\psi=\bar\lambda\cdot\frac{1}{z^n},~
\mathrm{d}h=\lambda \mathrm{d}z,~~~\lambda=\frac{1+\sqrt{3}\mathrm{i}}{2}.
\]
\end{example}

\subsection{Method 2: Deforming known examples}

\begin{example}
[Al\'ias-Palmer deformation]
\label{exa-deform}
Let $\vec{x}:M\to \mathbb{R}^3$ be a minimal surface given by
\[
\vec{x}_z\mathrm{d}z=(1-g^2,\mathrm{i}(1+g^2),2g,0)\omega
\]
and suppose that $\omega, g\omega, g^2\omega$ are all exact 1-forms, i.e., it has no real or imaginary
periods along any closed path. Al\'ias and Palmer \cite{Alias}
introduced a deformation with complex parameter $a\in \mathbb{C}\backslash\mathbb{R_{-}}$:
\[
\vec{x}_z\mathrm{d}z=\Big(1-ag^2,\mathrm{i}(1+ag^2),
(1+a)g,(1-a)g\Big)\omega.
\]
In terms of our Weierstrass data,
\begin{equation}\label{eq-deform}
\phi=\frac{1}{g},~~\psi=-ag=a\cdot\frac{-1}{\phi},
~~\mathrm{d}h=g\omega.
\end{equation}
It is easy to show that when $a$ is not a negative real number, this deformation still satisfies the regularity and period conditions and preserves the completeness property.
\end{example}

By this deformation Al\'ias and Palmer produced a generalization of the Enneper surface in $\mathbb{R}^4_1$ \cite{Alias}.
On the other hand, the catenoid can not be deformed in this way due to the existence of imaginary period.

Below we will see that, given a minimal surface in $\mathbb{R}^3$,
there are many ways to deform it in $\mathbb{R}^4_1$.
In Section~8 we can find more general deformations of the Enneper surface, and show that some of them are embedded.
The generalization of the catenoid in $\mathbb{R}^4_1$ will be
given in Example~\ref{exa-catenoid}.

Here we introduce another typical way of deformation in $\mathbb{R}^4_1$ which yields interesting generalization
of the $k$-noids (which also includes the generalization of the catenoid).

\begin{example}[The generalized Jorge-Meeks $k$-noids]
\label{exa-knoids}

Recall that the classical Jorge-Meeks $k$-noid
$\vec{x}=(x_1,x_2,x_3,0):M\to\mathbb{R}^3\subset \mathbb{R}^4_1$ has genus zero and $k$ catenoidal ends ($k\ge 2$).
It is not embedded when $k\ge 3$, because these catenoidal ends are sure to intersect with each other when they are extended sufficiently far away.
Fix $k$ and let $\lambda=\mathrm{e}^{2\pi\mathrm{i}/k}$.
The $k$-noid is defined by
\begin{equation}\label{eq-knoid}
M=\mathbb{C}\cup\{\infty\}\backslash\{\lambda^j|
,j=1,\cdots,k\},~~ \phi=\frac{-1}{\psi}=
z^{k-1},~~\mathrm{d}h=\frac{z^{k-1}\mathrm{d}z}{(z^k-1)^2}~.
\end{equation}
Observe that it has a $k$-fold rotational symmetry explicitly given by
$z\to \lambda^j z$ on $M$.

Fix two constants $a,b\in \mathbb{C}$ such that
$a^2-b^2=1, |a|^2-|b|^2>0$, and $a,b$
are linearly independent over $\mathbb{R}$ (for example we may take
$a=\frac{\sqrt{3}}{2},b=\frac{\mathrm{i}}{2}$). Deform the corresponding $\vec{x}_z=(v_1,v_2,v_3,0)$ to
\begin{equation}\label{eq-knoid}
(\hat{\vec{x}}_{a,b})_z=(v_1,v_2,av_3,bv_3 )~.
\end{equation}
By integration we obtain the generalized $k$-noid
$\hat{\vec{x}}_{a,b}:M\to\mathbb{R}^4_1$.
\end{example}
The generalized $k$-noid $\hat{\vec{x}}_{a,b}$ is still regular and complete, being conformal (yet not isometric)
to the original $\vec{x}:M\to\mathbb{R}^3$.
The total curvature is the same $\int K\mathrm{d}M=-4\pi(k-1)$.
Because the first two components are the same as the original surface in $\mathbb{R}^3$, and the original $\mathrm{d}h$ is an exact differential, both the horizontal
and vertical period conditions are satisfied.
The difference is that our deformation has no self-intersection.
\begin{proposition}
The generalized $k$-noid $\hat{\vec{x}}_{a,b}$ is embedded in $\mathbb{R}^4_1$.
\end{proposition}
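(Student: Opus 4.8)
The plan is to prove injectivity of $\hat{\vec x}_{a,b}$ by exploiting that the deformation leaves the two horizontal coordinates rigidly tied to the $\mathbb{R}^3$ $k$-noid while spreading its single height into two coordinates. First I would record the coordinates explicitly. Since $(\hat{\vec x}_{a,b})_z=(v_1,v_2,av_3,bv_3)$ shares its first two components with the original $\vec x=(x_1,x_2,x_3,0)$, we have $\hat x_1\equiv x_1$ and $\hat x_2\equiv x_2$. For the vertical part, set $W\triangleq 2\int v_3\,\mathrm{d}z$; because $\mathrm{d}h$ (hence $v_3\,\mathrm{d}z=2\,\mathrm{d}h$) is exact, $W$ is a single-valued meromorphic function on $M$, and a direct integration of the $k$-noid data gives $W(z)=-\tfrac{4}{k(z^k-1)}$. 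Thus $x_3=\mathrm{Re}\,W$, while $\hat x_3=\mathrm{Re}(aW)$ and $\hat x_4=\mathrm{Re}(bW)$. Now suppose $\hat{\vec x}(p)=\hat{\vec x}(q)$; the goal is to conclude $p=q$.

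The key step uses that $a,b$ are $\mathbb{R}$-linearly independent. Writing $\delta\triangleq W(p)-W(q)$, the equalities $\hat x_3(p)=\hat x_3(q)$ and $\hat x_4(p)=\hat x_4(q)$ read $\mathrm{Re}(a\delta)=0$ and $\mathrm{Re}(b\delta)=0$. Viewing $\delta$ as a vector in $\mathbb{R}^2\cong\mathbb{C}$, this homogeneous linear system has coefficient determinant equal, up to sign, to $\mathrm{Re}(a)\mathrm{Im}(b)-\mathrm{Im}(a)\mathrm{Re}(b)$, i.e. the determinant of $a,b$ as real vectors; linear independence makes it nonzero, forcing $\delta=0$. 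Hence $W(p)=W(q)$, and from $W=-\tfrac{4}{k(z^k-1)}$ this gives $p^k=q^k$, so $q=\omega^j p$ for some $k$-th root of unity $\omega^j$. This is precisely the mechanism resolving the self-intersections of the $\mathbb{R}^3$ $k$-noid: any crossing with $W(p)\neq W(q)$ is destroyed, and only crossings lying over a single value of $W$ can survive.

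Next I would invoke the rotational symmetry. Normalizing $\vec x(\infty)=(0,0,0)$, the original $k$-noid satisfies $\vec x(\omega z)=R\,\vec x(z)$, where $R$ is the pure rotation by $2\pi/k$ about the $x_3$-axis (fixing $x_3$ and rotating the $(x_1,x_2)$-plane, with the two axis points $\vec x(0),\vec x(\infty)$ as fixed points). If $j\equiv 0\pmod k$ then $q=p$ already. Otherwise, combining $\hat x_1=x_1,\ \hat x_2=x_2$ with $x_1(p)=x_1(q),\ x_2(p)=x_2(q)$ shows the horizontal part $(x_1,x_2)(p)$ is fixed by the nontrivial planar rotation $R^j$; its only fixed vector is the origin, so $x_1(p)=x_2(p)=0$, i.e. $p$ lies over the axis of symmetry.

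The remaining, and I expect hardest, step is to show the horizontal locus $\{x_1=x_2=0\}$ consists only of the two rotation-fixed points $z=0,\infty$. Granting this, $p\in\{0,\infty\}$; but these are fixed by every $\omega^j$, so $q=\omega^j p=p$, contradicting $p\neq q$, and $\hat{\vec x}_{a,b}$ is injective, hence embedded. To establish the locus claim I would integrate the horizontal data explicitly: with $g_\pm\triangleq 2\int\phi\,\mathrm{d}h,\ 2\int\psi\,\mathrm{d}h$ one has $x_1+\mathrm{i}x_2=g_++\overline{g_-}$, and both $g_-=-2\int(z^k-1)^{-2}\,\mathrm{d}z$ and $g_+=2\int z^{2k-2}(z^k-1)^{-2}\,\mathrm{d}z$ reduce, via $\frac{d}{dz}\big(\tfrac{z}{z^k-1}\big)=\tfrac{1-k}{z^k-1}-\tfrac{k}{(z^k-1)^2}$ and partial fractions, to rational plus logarithmic terms; one then analyzes the zeros of $g_++\overline{g_-}$ using the conjugation symmetry $x_1(\bar z)=x_1(z),\ x_2(\bar z)=-x_2(z)$ together with the $k$-fold symmetry to restrict to one sector. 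A local sanity check is $x_1+\mathrm{i}x_2\sim-2\bar z$ near $z=0$, a nondegenerate transversal crossing, so $z=0$ is isolated in the locus. Alternatively one may simply appeal to the known geometry of the Jorge--Meeks $k$-noid, that it meets its rotation axis only at the points coming from $z=0,\infty$. This global injectivity of $(x_1,x_2)$ over the axis is the genuine obstacle; everything else is forced by the two conserved horizontal coordinates and the single conserved complex height $W$.
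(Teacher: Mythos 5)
Your proposal is correct and follows essentially the same route as the paper's proof: first use the $\mathbb{R}$-linear independence of $a,b$ together with the exactness of $\mathrm{d}h$ to force $W(p)=W(q)$ and hence $q=\omega^j p$, then invoke the $k$-fold rotational symmetry to reduce to fixed points of the planar rotation, which are only $z=0,\infty$. The one difference is cosmetic: you explicitly flag and sketch a verification of the final claim that the surface meets the rotation axis only at $z=0,\infty$, whereas the paper simply asserts this from the known description of the Jorge--Meeks $k$-noid.
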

\begin{proof}
Suppose there is $\hat{\vec{x}}_{a,b}(z)=\hat{\vec{x}}_{a,b}(w)$.
Since $v_3\mathrm{d}z=\mathrm{d}h=d\frac{-1}{k(z^k-1)}$ and $a,b$
are linearly independent over $\mathbb{R}$, by comparing the third and fourth
components we know $z^k=w^k$. Thus $w=z\cdot \lambda^j$ with $\lambda=\mathrm{e}^{\frac{2\pi\mathrm{i}}{k}}$.

Now we need only to consider the first two components of $\hat{\vec{x}}_{a,b}(z)$
and $\hat{\vec{x}}_{a,b}(\lambda^j z)$,
which are the same as the first two components of $\vec{x}(z),\vec{x}(\lambda^j z).$
Since $\vec{x}$ has a $k$-fold rotational symmetry on the $(x_1,x_2)$ plane.
Thus $\hat{\vec{x}}_{a,b}(z)=\hat{\vec{x}}_{a,b}(\lambda^j z)$ if, and only if,
their first two components correspond to a fixed point under this rotation.
According to the description of the $k$-noid, this corresponds to
the fixed point of $z\to z\cdot\lambda^j,$ i.e. $z=0$ or $z=\infty$.
But in either case we have $w=z\cdot \lambda^j=z$.
This confirms the embedding property.
\end{proof}

This example, together with the generalized Enneper surfaces (see Proposition~\ref{prop-enneper4}), show that the Lopez-Ros theorem \cite{Ros} does not generalize to $\mathbb{R}^4_1$. (See the discussion in the introduction.)

\begin{remark}
The deformation we introduced above is named the \emph{Lorentz deformation}, and used to construct a generalization of the Chen-Gackstatter surface in one of our preprints \cite{Ma-Xie}.
\end{remark}

\subsection{Method 3: Determine the Weierstrass data with given divisors}

This is the most widely used method in constructing examples,
describing the related moduli space, or showing non-existence results
under given geometric and topological conditions.

For example, this is repeatedly used in our classification of complete stationary surfaces of algebraic type with total curvature $\int K\mathrm{d}M=-4\pi$. See Theorem~\ref{thm-catenoid} and Theorem~\ref{thm-enneper} for the characterizations of the generalized catenoids and the generalized Enneper surfaces, respectively; the proof to our classification theorem is finished in our preprint \cite{Ma-4pi}.

Here we present a by-product of this exploration, that is, an algebraic, complete stationary surface with two good singular ends and smallest possible total Gaussian curvature
$-\int_M K\mathrm{d}M=8\pi$. The derivation of
this example is somewhat technical, and irrelevant to other parts of this paper. So we leave these details to the other paper \cite{Ma-4pi}.

\begin{example}
[Genus zero, two good singular ends and $\int_M K\mathrm{d}M=-8\pi$]
\label{exa-singular1}
\[
M=\mathbb{C}\backslash\{0\},~\phi=z^2(z^2+a),
~\psi=\frac{z^4}{z^2+a},
~\mathrm{d}h=\frac{z^2+a}{z^4}\mathrm{d}z.~~
(a\in\mathbb{C}\backslash\{0\})
\]
This example has the following properties:
\begin{itemize}
\item Its genus $g=0$; the number of ends $r=2$; and $\deg(\psi)=\deg(\psi)=4$.
\item At the end $z=0$,
$\mathrm{ind}_0=2,\tilde{d}_0=1$; at $z=\infty$, $\mathrm{ind}_{\infty}=-2,\tilde{d}_{\infty}=3.$
\item Using either of \eqref{eq-deg1},\eqref{eq-deg2} or \eqref{eq-jorgemeeks2}
we get $\int_M K\mathrm{d}M =-8\pi$.
\end{itemize}
\end{example}

The regularity, completeness and period conditions are easy to verify, except that
we need to find suitable parameter $a$ such that
$\phi\ne\bar\psi$ on $M=\mathbb{C}\backslash\{0\}$. Denote $w=z^2$.
Then $\phi(w)=w(w+a),\psi=\frac{w^2}{w+a}$. When $w\ne0$ we have
\[
\phi(w)=\overline\psi(w) ~~~\Leftrightarrow~~|w+a|^2=w^3/|w|^2.
\]
So $w=r\lambda^j$ with  $\lambda=\mathrm{e}^{2\pi\mathrm{i}/3}, j\in\{0,1,2\}, r\in\mathbb{R}\backslash\{0\}$. Insert this
back into the equality above we get
\[
\phi(w)=\overline\psi(w) ~~~\Leftrightarrow~~
\exists ~j\in\{0,1,2\}, s.t. ~|r\lambda^j+a|^2=r.
\]
It is not difficult to see that when $a$ is a sufficiently large
positive real number (e.g. $a>1$) there is no (positive) real
solution $r$, hence $\phi\ne\bar\psi$ always holds true.
A standard proof is reducing $|r\lambda^j+a|^2=r$ to $r^2-(a+1)r+a^2=0$ when
$j=1,2$, and to $r^2+(2a-1)r+a^2=0$ when $j=0$. Then both
discriminants of these two quadratic equations are negative when
$a>1$.

Such an explicit example is a helpful supplement to the
discussion of good singular ends in Section~4, and to the
Gauss-Bonnet type formulas in Section~5. Below we provide a somewhat different example.
\begin{example}
[Genus zero, one good singular end and $\int K\mathrm{d}M=-8\pi$]
\label{exa-singular2}
\[
M=(\mathbb{C}\backslash\{0\})\cup\{\infty\},
~\phi=z(z^2+az+b),~\psi=\frac{z^2}{z^2+az+b},
~\mathrm{d}h=\frac{z^2+az+b}{z^7}.
\]
The good singular end $z=0$ has $\mathrm{ind}=1$ and $\tilde{d}=5$. We leave
it to the interested reader to verify that it is regular, complete,
and it has no real periods; in particular, the
parameters $a,b\in\mathbb{C}$ could be chosen suitably so that
$\phi\ne\bar\psi$ on $\mathbb{C}\backslash\{0\}$.
\end{example}

\section{The generalized catenoid}

The catenoid in $\mathbb{R}^3$ is not only one of the simplest examples of minimal surfaces, but also plays an important role as a model surface both in the general theory and in many constructions. So it is also important to find a suitable generalization in $\mathbb{R}^4_1$.
At the beginning we tried to find an annulus with two catenoidal ends
(using Method 1 above with Laurent series like \eqref{eq-laurent} \eqref{eq-laurent2} below), and obtained

\begin{example}
[The generalized catenoids]
\label{exa-catenoid}
This is defined over $M=\mathbb{C}\backslash\{0\}$ with
\begin{equation}\label{eq-catenoid}
\phi=z+t,\ \psi=\frac{-1}{z-t},\ \mathrm{d}h=s\frac{z-t}{z^2}\mathrm{d}z.~~~~~
(-1<t<1, s\in\mathbb{R}\backslash\{0\})
\end{equation}
\end{example}
When $t=0$, one obtains again the classical catenoid in $\mathbb{R}^3$.
Conversely, our construction could be viewed as the most natural
deformation of the catenoid preserving the topology, the total curvature and the degree of the Gauss maps.

By contrast, the catenoid in $\mathbb{R}^3_1$ given by
\eqref{eq-catenoid2} has a cone-like singularity, and the Al\'ias-Palmer deformation \cite{Alias} fails to satisfy the period condition.
In view of these facts, the following theorem is a nice
characterization of the generalized catenoids.
\begin{theorem}\label{thm-catenoid}
A completely immersed algebraic stationary surface in $\mathbb{R}^4_1$
with total curvature $\int K=-4\pi$ and two regular ends is
a generalized catenoid given above.
\end{theorem}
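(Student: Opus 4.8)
The plan is to use the Gauss--Bonnet results of Section~5 to fix the conformal type of $M$ and the degrees of the two Gauss maps, then to reconstruct $(\phi,\psi,\mathrm{d}h)$ up to a Lorentz isometry, and only at the very end to invoke the regularity condition $\phi\neq\bar\psi$, which is where the sharp parameter range appears. First, since both ends are regular we have $\mathrm{ind}^{+}_{p_j}=\mathrm{ind}_{p_j}=0$, hence $\widetilde d_j=d_j$, and completeness together with Proposition~\ref{ineq-multiplicity} gives $d_j\ge 1$. Substituting $\int_M K\,\mathrm{d}M=-4\pi$, $r=2$ and all indices zero into \eqref{eq-jorgemeeks2} yields $2g+d_1+d_2=2$, forcing $g=0$ and $d_1=d_2=1$. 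Thus $\overline M=\mathbb{C}P^1$, and after a conformal reparametrization I place the two ends at $0$ and $\infty$, so $M=\mathbb{C}\backslash\{0\}$. Feeding the same data into \eqref{eq-deg1} and \eqref{eq-deg2} gives $\deg\phi=\deg\psi=1$, so both Gauss maps are M\"obius transformations.

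Next I would use the Lorentz action \eqref{trans}, which acts on $(\phi,\psi)$ by conjugate M\"obius maps, to move the poles of $\phi$ and $\psi$ off the two ends; after this normalization $\phi$ and $\psi$ each have a single simple pole $a_\phi\neq a_\psi$ at interior regular points. Regularity condition~2 then forces $\mathrm{d}h$ to have simple zeros exactly at $a_\phi$ and $a_\psi$, while $d_1=d_2=1$ forces $\vec{x}_z\,\mathrm{d}z$, and hence $\mathrm{d}h$, to have a double pole at each end. Counting against $\deg K_{\mathbb{C}P^1}=-2$ leaves no freedom and yields $\mathrm{d}h=c\,(z-a_\phi)(z-a_\psi)/z^2\,\mathrm{d}z$, which is exactly the shape of the height differential in Example~\ref{exa-catenoid}.

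Then the period conditions are imposed. Each of $\mathrm{d}h,\ \phi\,\mathrm{d}h,\ \psi\,\mathrm{d}h,\ \phi\psi\,\mathrm{d}h$ is holomorphic on $M$, because the zeros of $\mathrm{d}h$ at the interior points cancel the poles of $\phi$ and $\psi$; therefore their periods over the generator of $H_1(\mathbb{C}\backslash\{0\})$ equal $2\pi\mathrm{i}$ times the residue at $z=0$. The three identities in \eqref{eq-period} collapse to: $\mathrm{Res}_0\,\mathrm{d}h$ and $\mathrm{Res}_0(\phi\psi\,\mathrm{d}h)$ are real, and $\mathrm{Res}_0(\phi\,\mathrm{d}h)=\overline{\mathrm{Res}_0(\psi\,\mathrm{d}h)}$. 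Expanding these residues in terms of the M\"obius coefficients gives a small algebraic system; solving it modulo the residual symmetry (the dilations $z\mapsto\lambda z$ with their compensating Lorentz scalings, and the end-exchanging inversion) should identify $(\phi,\psi,\mathrm{d}h)$, up to a Lorentz transformation, with $\phi=z+t$, $\psi=-1/(z-t)$, $\mathrm{d}h=s\,(z-t)/z^2\,\mathrm{d}z$ for real $s,t$.

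The genuinely hard step, singled out in Remark~\ref{rem-phipsibar}, is the non-holomorphic condition $\phi\neq\bar\psi$; being Lorentz invariant it may be checked in the gauge just obtained. There $\phi=\bar\psi$ reads $(z+t)(\bar z-t)=-1$, whose real and imaginary parts give $\mathrm{Im}\,z=0$ and $|z|^2=t^2-1$; this has a solution in $M=\mathbb{C}\backslash\{0\}$ precisely when $|t|>1$, so immersedness already forces $|t|\le 1$. Moreover at the end $z=0$ one has $\phi(0)=t$ and $\bar\psi(0)=1/t$, which coincide exactly when $t=\pm1$; requiring this end to be \emph{regular} rules out $|t|=1$. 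Hence the hypotheses force $-1<t<1$, the admissible range of Example~\ref{exa-catenoid}, and the surface is the generalized catenoid. I expect this last real-analytic step---not the holomorphic bookkeeping of the earlier steps---to be the principal difficulty, since deciding solvability of $\phi=\bar\psi$ is exactly the new feature distinguishing $\mathbb{R}^4_1$ from $\mathbb{R}^3$.
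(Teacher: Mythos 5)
Your plan follows the same route as the paper's own proof: extract $g=0$, $\widetilde d_1=\widetilde d_2=1$ and $\deg\phi=\deg\psi=1$ from the Gauss--Bonnet machinery, normalize by the Lorentz action \eqref{trans} and a coordinate change, determine $\mathrm{d}h$ from the regularity condition and the multiplicities, impose the period conditions, and only then analyze $\phi\neq\bar\psi$. The topological bookkeeping is correct, and your final real-analytic step agrees with the paper's: interior solutions of $\phi=\bar\psi$ exist iff $|t|>1$ and the end $z=0$ becomes singular iff $|t|=1$, which is exactly the paper's condition $c<-t^2$ after rescaling.

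The one genuine gap is the period step, which you defer with ``should identify.'' Two concrete problems there. First, in your gauge both $\phi$ and $\psi$ are unnormalized M\"obius maps with interior poles, so the residue system carries many free complex parameters; the paper instead first uses $\phi(\infty)=\infty$, $\psi(\infty)=0$ and a rotation--dilation of $z$ to reach $\phi=z+t$ ($t$ real), $\psi=c/(z+r)$, $\mathrm{d}h=s(z+r)z^{-2}\mathrm{d}z$ \emph{before} touching periods, after which the residues at $z=0$ of $\mathrm{d}h,\ \phi\,\mathrm{d}h,\ \psi\,\mathrm{d}h,\ \phi\psi\,\mathrm{d}h$ are simply $s,\ s(t+r),\ 0,\ sc$, and \eqref{eq-period} instantly gives $s,c\in\mathbb{R}\setminus\{0\}$ and $r=-t\in\mathbb{R}$. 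You should either adopt that normalization or actually solve your larger system; as written the reduction is asserted, not proved. Second, the periods only force $c$ to be real; they do \emph{not} yield $\psi=-1/(z-t)$. The sign of $c$ is decided by the regularity condition: if $c>0$, then $\phi=\bar\psi$ reads $(z+t)(\bar z-t)=c$, whose imaginary part forces $z\in\mathbb{R}$ (when $t\neq0$) and whose real part $z^2=c+t^2>0$ always has nonzero real solutions (and for $t=0$ one gets the circle $|z|^2=c$), so every $c>0$ surface has singular points. Your final-gauge analysis silently assumes $c=-1$ and never excludes this branch; adding that short argument, and carrying out the residue computation, closes the proof.
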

\begin{proof}
For a complete and immersed algebraic stationary surface
$M\to\mathbb{R}^4_1$ with $\int K=-4\pi$ and two regular ends, by
Corollary~\eqref{GB2C} it has genus $g=0$. By Huber's theorem $M$ is
conformal to $\mathbb{C}\backslash\{0\}$ with two ends at
$z=0,\infty$.

Next, by \eqref{eq-deg1} and \eqref{eq-deg2}, $\int K=-4\pi$ implies
that the Gauss maps $\phi,\psi$ have degree $1$, hence they are
fractional linear functions on $\mathbb{C}$.

At first sight, $\phi,\psi$ have six coefficients to choose arbitrarily.
But we can apply a Lorentz transformation in the ambient $\mathbb{R}^4_1$
(whose action on $\phi,\bar\psi$ are linear fractional transformations
according to \eqref{trans} in Remark~\ref{rem-trans}),
or a change of complex coordinate $z\to (\alpha z+\beta)/(\gamma z+\delta)$,
to simplify the expressions of $\phi,\psi$.
This is what we want to do below.

Without loss of generality, we suppose $\phi(\infty)=\infty,~\psi(\infty)=0.$
Otherwise, if $\phi(\infty)=-d/c,\psi(\infty)=-\bar{b}/\bar{a}$,
we may use linear fractional transformation
$\phi\to (a\phi+b)/(c\phi+d),
\psi\to  (\bar{a}\psi+\bar{b})/(\bar{c}\psi+\bar{d})$
(which is non-degenerate since $\phi\ne\bar\psi$)
and the effect is as desired.

Up to a change of the complex coordinate $z$ and a Lorentz rotation
using \eqref{trans}, we may normalize the expression of $\phi$ so that
\[\phi=z+t\]
(with parameter $t\in \mathbb{R}$); meanwhile, $\psi=\frac{c}{z+r}$
with parameters $c,r\in\mathbb{C}$.

Once $\phi,\psi$ are given as above with $r\ne 0$,
the height differential $\mathrm{d}h$ must
have a unique simple zero at $z=-r$ and no poles on
$\mathbb{C}\backslash\{0\}$ due to the regularity condition
(Theorem~\ref{thm-period}). Thus
\[
\mathrm{d}h=s\frac{z+r}{z^k}\mathrm{d}z
\]
with parameters $s\in\mathbb{C},k\in \mathbb{Z}$. Completeness implies
$1\le k\le 3$. By Proposition~\ref{ineq-multiplicity},
each end has multiplicity at least $1$, hence $k=2$. When $r=0$ this
expression for $\mathrm{d}h$ is still valid. So we have
\[
\phi=z+t,~~\psi=\frac{c}{z+r},~~ \mathrm{d}h=s\frac{z+r}{z^2}\mathrm{d}z,~~~~t\in
\mathbb{R},~c,r,s\in\mathbb{C}.
\]
To decide the possible values of the parameters, invoking the period
condition \eqref{eq-period}, we deduce
\[s,c\in\mathbb{R}\backslash\{0\},~~~r=-t\in\mathbb{R}.\]
Now that $\phi=z+t,\psi=\frac{c}{z-t}$, by the regularity condition
$\phi\ne\bar\psi$, the parameters $c,t\in\mathbb{R}$ must be chosen
so that the equation
\[
z\bar{z}+t(\bar{z}-z)-c-t^2=0
\]
has no solution $z$. If there is some $z,c,t$ satisfying this equation,
comparing the imaginary and real parts separately shows that either
$t=0,c\ge 0$, or $z\in\mathbb{R}, c+t^2\ge 0$. Thus
$\phi\ne\bar\psi$ always holds true if, and only if, the real
parameters $c,t$ satisfy
\[c<-t^2.\]
After the following change of complex coordinate $z$,
together with a change of frames in ambient space $\mathbb{R}^4_1$ as below (see Remark~\ref{rem-trans}):
\[
\tilde{z}=\frac{1}{\sqrt{-c}}z,~~
\tilde\phi=\frac{1}{\sqrt{-c}}\phi,~~
\tilde\psi=\frac{1}{\sqrt{-c}}\psi,~~ \widetilde{\mathrm{d}h}=\sqrt{-c}\mathrm{d}h~,
\]
we get the desired Weierstrass data
\[
\tilde\phi=\tilde{z}+\tilde{t},~~
\tilde\psi=\frac{-1}{\tilde{z}-\tilde{t}},~~
\widetilde{\mathrm{d}h}=\tilde{s}\frac{\tilde{z}-\tilde{t}}{\tilde{z}^2}d\tilde{z},
\]
with parameters $\tilde{c}=-1,\tilde{s}=\sqrt{-c}s\in\mathbb{R}\backslash\{0\}$ and
\[
\tilde{t}=\frac{t}{\sqrt{-c}}\in(-1,1)\subset\mathbb{R}.
\]
This finishes the proof. In particular, the discussion above has shown that
Example~\ref{exa-catenoid} is completely immersed without singular ends.
\end{proof}

It is interesting to examine the properties of the generalized
catenoid and compare it with the catenoid in $\mathbb{R}^3$.
\begin{proposition}\label{prop-catenoid}
The generalized catenoid has the following properties:

(1) It is embedded in $\mathbb{R}^4_1$.

(2) There is a symmetry between its two ends. In other words,
there is an isometry of $\mathbb{R}^4_1$ which interchanges these two ends
and preserves the whole surface invariant.

(3) Unlike the catenoid in $\mathbb{R}^3$ or $\mathbb{R}^3_1$,
the generalized catenoid has only a finite symmetry group when $t\ne 0$.

(4) Two generalized catenoids are congruent to each other (up to a
dilation and a Lorentz isometry) if and only if their parameters
share the same absolute value $|t|$.

(5) It is not contained in any 3-dimensional subspace when $t\ne 0$.
Each end is embedded and asymptotic to a half catenoid
in a 3-dimensional Euclidean subspace.
\end{proposition}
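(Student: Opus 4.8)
The plan is to first make the surface completely explicit and then read off all five properties, calling on the transformation rule \eqref{trans} whenever an ambient isometry is needed. Integrating the Weierstrass representation \eqref{x} with the data \eqref{eq-catenoid}, writing $z=r\mathrm{e}^{\mathrm{i}\theta}$ and using $s\in\mathbb{R}$, I expect (up to an irrelevant translation)
\begin{align*}
x_1 &= 2s\cos\theta\Big(r+\tfrac{1+t^2}{r}\Big), & x_2 &= 2s\sin\theta\Big(r+\tfrac{1-t^2}{r}\Big),\\
x_3 &= 4s\ln r, & x_4 &= \tfrac{4st\cos\theta}{r}.
\end{align*}
When $t=0$ this is the classical catenoid in the slice $\{x_4=0\}$, the consistency check I would run first. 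The embedding property (1) then follows directly: $x_3$ recovers $r$; when $t\ne 0$, $x_4$ recovers $\cos\theta$ and $x_2$ recovers $\sin\theta$ (note $r+\tfrac{1-t^2}{r}>0$ since $|t|<1$), so $(r,\theta)$, hence $z$, is determined; for $t=0$ the pair $(x_1,x_2)$ recovers $\theta$ once $r$ is known.

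For property (5), the key observation is the algebraic identity $\phi\psi=-1+2t\psi$, which shows that $1,\phi,\psi,\phi\psi$ span only the three-dimensional space $\langle 1,\phi,\psi\rangle$, with $1,\phi,\psi$ independent. Writing the condition that $\vec{x}$ lie in a hyperplane with constant real normal $\vec{n}$ as $\langle\vec{n},(\phi+\psi,-\mathrm{i}(\phi-\psi),1-\phi\psi,1+\phi\psi)\rangle\equiv 0$ and collecting coefficients of $1,\phi,\psi$, I expect $t\ne 0$ to force $\vec{n}=0$, so no containing $3$-space exists; at $t=0$ one recovers $\vec{n}=(0,0,0,1)$ and the $\mathbb{R}^3$ slice. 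The asymptotic statement is read from the parametrization: as $r\to\infty$ one has $x_4\to 0$ and a (asymptotically round) catenoidal end in the Euclidean $\{x_4=0\}$; as $r\to 0$ the dominant terms give $x_4\sim\tfrac{2t}{1+t^2}x_1$, so the end approaches the hyperplane $\{x_4=\tfrac{2t}{1+t^2}x_1\}$, which is spacelike (Euclidean) because $\big|\tfrac{2t}{1+t^2}\big|<1$; inside it the leading radial behavior $\sim 1/r$ against logarithmic height $x_3=4s\ln r$ identifies a half-catenoid end, the logarithmic growth distinguishing it from a planar end.

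Properties (2) and (3) I would handle through the correspondence between ambient Lorentz isometries and the Möbius action \eqref{trans}. For (2) I look for a conformal involution of $M=\mathbb{C}\setminus\{0\}$ swapping the ends, necessarily $\tau(z)=\mu/z$, together with $A=\left(\begin{smallmatrix}a&b\\c&d\end{smallmatrix}\right)\in SL(2,\mathbb{R})$ carrying $(\phi,\psi,\mathrm{d}h)$ to its pullback $(\phi\circ\tau,\psi\circ\tau,\tau^{*}\mathrm{d}h)$ via \eqref{trans}. Matching the $\phi$-equation should force $a=tc$, $d=-ct$, $\mu=-1/c^{2}$; the $\psi$-equation then forces $\mu=t^{2}-1$, giving $c^{2}=\tfrac{1}{1-t^{2}}$, and I expect the $\mathrm{d}h$-equation to be automatically consistent. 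The resulting isometry (composed if necessary with a translation) swaps $0\leftrightarrow\infty$ and preserves the surface. For (3), a continuous rotational symmetry would be a one-parameter group of such isometries connected to the identity, hence induce an automorphism $\tau(z)=\alpha z$ with $\alpha\ne 1$; matching the $\phi$-equation forces $c=0$, the $z^{2}$-coefficient of the $\psi$-equation then forces $b=0$, and the remaining relation reads $t(1-\alpha)=0$, impossible for $t\ne 0$. Thus no rotational symmetry survives when $t\ne 0$, while for $t=0$ the family $z\mapsto\mathrm{e}^{\mathrm{i}\lambda}z$ is admissible.

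Finally, property (4). For the ``if'' direction, the reparametrization $z\mapsto -z$ sends the $t$-data to the $(-t)$-data up to the Lorentz isometry $(\phi,\psi)\mapsto(-\phi,-\psi)$ (the matrix $\mathrm{diag}(\mathrm{i},-\mathrm{i})$), while the scale $s$ is absorbed by a dilation (for $|s|$) and by $-\mathrm{Id}$ (for the sign of $s$); hence $|t_1|=|t_2|$ gives congruence. The ``only if'' direction is the rigidity statement, which I expect to be the main obstacle. The clean route is to observe that the unordered pair of asymptotic hyperplanes from (5) is a congruence invariant (dilations preserve their directions), and that the hyperbolic angle between their timelike normals, namely $\tfrac{(1+t^{2})^{2}}{(1-t^{2})^{2}}$, is a strictly monotone function of $t^{2}\in[0,1)$, so it recovers $|t|$. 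The delicate part is making ``asymptotic hyperplane'' rigorous and genuinely isometry-invariant; the alternative, purely algebraic route reduces via the end-swap of (2) to an automorphism $\tau(z)=\alpha z$ intertwining the $t_1$- and $t_2$-data and reruns the comparison of (3) to force $c=0$, $b=0$, $t_2=\alpha t_1$ and $|\alpha|=1$. In either route the real care is in the bookkeeping of the anti-conformal (orientation-reversing) case, where \eqref{trans} is replaced by its complex conjugate, and in checking the dilation never interferes with the $\phi,\psi$ matching, since that is precisely where a spurious extra congruence could hide.
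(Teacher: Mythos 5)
Your proposal is correct, and its skeleton --- integrate \eqref{x} explicitly (your parametrization agrees with the paper's \eqref{eq-catenoid}), read off injectivity from the components $x_3,x_4,x_2$, and prove the ``only if'' half of (4) by exhibiting the pair of spacelike asymptotic hyperplanes and the monotone invariant built from their timelike normals --- is the same as the paper's. Where you genuinely diverge is in (2), (3) and the first half of (5): the paper works with the Laurent expansion $\vec{x}_z=\tfrac{1}{z^2}\vec{w}_{-2}+\tfrac{1}{z}\vec{w}_{-1}+\vec{w}_0$ and the invariant subspaces $V_{-2},V_{-1},V_0$ spanned by the coefficient vectors (an explicit $4\times 4$ matrix $A_0$ for the end-swap; ``$A$ preserves $V_{-2},V_{-1},V_0$, hence $A=\mathrm{id}$'' for the non-existence of rotations; ``$V'\oplus V''=\mathbb{R}^4_1$'' for non-degeneracy), whereas you work entirely at the level of the Weierstrass data and the M\"obius action \eqref{trans}, solving $\phi\circ\tau=\frac{a\phi+b}{c\phi+d}$ etc.\ for $\tau(z)=\mu/z$ or $\alpha z$. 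I checked your matchings: for (2) the system does close up with $\mu=t^2-1$, $c^2=\frac{1}{1-t^2}$ and a consistent $\mathrm{d}h$-equation (a translation in $x_3$ is indeed needed, as you note); for (3) the degree count forces $c=0$, then $b=0$, then $t(1-\alpha)=0$; and your identity $\phi\psi=-1+2t\psi$ gives a cleaner linear-algebra proof that no hyperplane contains the surface than the paper's. The trade-off is that the paper's Laurent-coefficient method packages (2)--(5) around one computation and makes the congruence invariant $\langle\vec v',\vec v''\rangle=\frac{1+t^2}{1-t^2}$ manifestly isometry-invariant (your expression is the square of this, a harmless slip since only monotonicity in $t^2$ is used), while your route stays closer to the Weierstrass data and generalizes more readily to other examples. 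The one point where both you and the paper are equally informal is the claim that ``asymptotic to a half-catenoid in a spacelike hyperplane'' is a rigorous congruence invariant; your honest flagging of this, plus the algebraic fallback via the end-swap and the rerun of the (3)-computation, is an acceptable substitute for what the paper also leaves implicit.
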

\begin{proof}
Up to a dilation and a translation we may take $s=1$ and write
\begin{equation}\label{eq-catenoid}
\vec{x}~ =~ 2~\mathrm{Re}\left[\left(z+\frac{t^2+1}{z},
-\mathrm{i}\Big(z+\frac{t^2-1}{z}\Big), 2\ln z,\frac{2t}{z}\right)\right]~.
\end{equation}
To verify embeddedness, it suffices to show that the four components
of $\vec{x}$ determine a unique $z$ on $\mathbb{C}\backslash\{0\}$. The
third component gives the module $|z|$. Combined with this
information, using the fourth and the second component of $\vec{x}$ we can
derive the real and imaginary part of $z$, separately. Thus there is
a unique $z$ corresponding to a given $\vec{x}(z)$. So the generalized
catenoid is embedded.

The end behavior is determined by the Laurent expansion
\begin{equation}\label{eq-laurent}
\vec{x}_z~=~\frac{1}{z^2}\vec{w}_{-2}+\frac{1}{z}\vec{w}_{-1}+\vec{w}_0,
\end{equation}
where the coefficient vectors (written as column vectors) are
\begin{equation}\label{eq-laurent2}
\vec{w}_{-2}=\begin{pmatrix}-(1+t^2)\\ -\mathrm{i}(1-t^2)\\ 0\\ -2t\end{pmatrix},~~
\vec{w}_{-1}=\begin{pmatrix}0\\ 0\\ 2\\ 0\end{pmatrix},~~
\vec{w}_0=\begin{pmatrix}1\\ -\mathrm{i}\\ 0\\ 0\end{pmatrix}~.
\end{equation}
One may verify directly that the Lorentz transformation $\vec{x} \to A_0\vec{x}$
with
\[
A_0=\frac{1}{1-t^2}\begin{pmatrix}
-1-t^2~ &0~~~ &0 &2t\\
0 & 1~~~& 0& 0\\
0 & 0~~~& 1& 0\\
-2t &0~~~ &0 &~1+t^2
\end{pmatrix}
\]
will preserve the vector-valued 1-form $\vec{x}_z\mathrm{d}z$
but interchange its two poles. This proves conclusion (2).

On the other hand, if there is a continuous 1-parameter family of Lorentz transformations
preserving $\vec{x}_z\mathrm{d}z$ as given above, each $A$ in this family must
have invariant subspaces
\begin{align*}
V_{-2}&~=~\mathrm{Span}\{\mathrm{Re}(\vec{w}_{-2}),\mathrm{Im}(\vec{w}_{-2})\}
=\mathrm{Span}\{(\begin{matrix}\frac{1+t^2}{1-t^2}\end{matrix},
0,0,\begin{matrix}\frac{2t}{1-t^2}\end{matrix}),(0,1,0,0)\},\\
V_{-1}&~=~\mathrm{Span}\{\vec{w}_{-1}\}=\mathrm{Span}\{(0,0,1,0)\},\\
V_0~~&~=~\mathrm{Span}\{\mathrm{Re}(\vec{w}_0),\mathrm{Im}(\vec{w}_0)\}
=\mathrm{Span}\{(1,0,0,0),
(0,1,0,0)\}.
\end{align*}
When $t\ne0$, $V_{-2},V_0$ are distinct,
and they together span a Lorentz 3-space in
$\mathbb{R}^4_1$ which is orthogonal to $\vec{w}_{-1}$. From these
information it is easy to see that the Lorentz transformation $A$ is a reflection with respect to the hyperplane orthogonal to $(0,1,0,0)$, or is an identity map when it is orientation-preserving. This proves (3).

We observe that for any spacelike 3-space $V$
in $\mathbb{R}^4_1$, there is a unique timelike vector $\vec{v}$
which is future-oriented (i.e. the fourth component is positive)
and $\langle \vec{v},\vec{v}\rangle=-1, \langle \vec{v},V\rangle=0$.

For $V'=V_{-2}\oplus V_{-1}$, we have
$\vec{v}'=(\frac{2t}{1-t^2},0,0,\frac{1+t^2}{1-t^2})$.

For $V''=V_0\oplus V_{-1}$, we have $\vec{v}''=(0,0,0,1)$.

An algebraic invariant associated with the pair $\{V',V''\}$ is
\[\langle \vec{v}',\vec{v}''\rangle=\frac{1+t^2}{1-t^2},\]
which corresponds to the hyperbolic angle between the two ends,
or between the spacelike 3-spaces $V',V''$
containing the asymptotic half catenoids (see next paragraph).
Thus when the values of $|t|$ are different, the corresponding
generalized catenoids are not equivalent. On the other hand, if we
reverse the sign of $t$, the mapping $x$ will differ by a reflection
according to \eqref{eq-catenoid}. This establishes (4).

By \eqref{eq-catenoid} it is easy to see that $\vec{x}(z)$ is asymptotic
to the catenoid
\[
\vec{x}~ =~ 2~\mathrm{Re}\left(z+\frac{1}{z}, -\mathrm{i}\Big(z-\frac{1}{z}\Big),
2\ln z,0\right)~.
\]
in $V''$ when $z\to \infty, \mathrm{Re}(\frac{1}{z})\to 0$.
At the end $z=0$ one can verify in
a similar way that $x$ is asymptotic to a half catenoid in $V'$
(or by the symmetry between the two ends).
Also note that $V',V''$ span the full $\mathbb{R}^4_1$ when $t\ne 0$.
So the generalized catenoid is not contained in any 3-dimensional subspace.
This finishes the proof to the final conclusion (5).
\end{proof}
\begin{remark}
In the proof to the conclusion (3), we have shown that when $t\ne 0$, the symmetry group of a generalized catenoid has order four; this is generated by the reflection $A_0$, and the reflection
$A_1=\mathrm{diag}(1,-1,1,1)$.
\end{remark}
\begin{remark}\label{rem-rotation}
As to the conclusion (4), we can show that any stationary
surface of revolution (i.e. it has a continuous 1-parameter symmetry group)
must be contained in a 3-dimensional subspace of $\mathbb{R}^4_1$.
Here the proof is omitted.
Regretfully we could not find this result in the literature.
\end{remark}

As in $\mathbb{R}^3$, for a stationary surface $\vec{x}:M\to\mathbb{R}^4_1$
there is an associated family of stationary surfaces
$\vec{x}_{\theta}:M\to\mathbb{R}^4_1$ with $(\vec{x}_{\theta})_z \mathrm{d}z=\mathrm{e}^{\mathrm{i}\theta}\vec{x}_z \mathrm{d}z$.
They are locally isometric to each other, yet the period condition might be violated and the topological type might be different. A typical example is the associated family of the generalized catenoid, which helps us to define the generalized helicoid.
\begin{example}
[The associated family of the generalized catenoid]
\label{exa-associate}
This is represented on $\mathbb{C}\backslash\{0\}$ using \eqref{x} with
\begin{equation}\label{eq-associate}
\phi=z+t,\ \psi=\frac{-1}{z-t},\ \mathrm{d}h=\lambda\frac{z-t}{z^2}\mathrm{d}z,
\end{equation}
where the parameter $\lambda$ is complex and $t\in (-1,1)$ is real as in Example~\ref{exa-catenoid}.
When $\lambda$ is not a real number, the period condition is not satisfied,
and the corresponding stationary surface in $\mathbb{R}^4_1$
is conformal to the covering space $\mathbb{C}$.
\end{example}
\begin{example}
[The generalized helicoid]
\label{exa-helicoid}
When $\lambda$ is purely imaginary in \eqref{eq-associate}, we get
the generalized helicoid in $\mathbb{R}^4_1$.
\end{example}
It is well-known that the classical catenoid and helicoid in
$\mathbb{R}^3$ are embedded, but any other surface in their
associated family is not. The same is true in $\mathbb{R}^4_1$.
\begin{proposition}\label{prop-helicoid}
The generalized catenoids and the generalized helicoids are embedded. Any other
stationary surface in the associated family \eqref{eq-associate}
has self-intersection points.
(Indeed, on the universal covering we have a simply-connected
stationary surface whose unique end is not an embedded end.)
\end{proposition}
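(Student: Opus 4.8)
The plan is to reduce the whole statement to deciding injectivity of one explicit map on the universal cover. Writing $z=re^{\mathrm{i}\theta}$ and integrating \eqref{ex-helicoid} through \eqref{x}, I would first put the four coordinates in the compact form
\[
x_1+\mathrm{i}x_2=2\lambda\Big(z+\tfrac{t^2}{z}\Big)+\tfrac{2\bar\lambda}{\bar z},\qquad x_3=4\,\mathrm{Re}[\lambda\ln z],\qquad x_4=4t\,\mathrm{Re}\big[\tfrac{\lambda}{z}\big].
\]
Here $x_1+\mathrm{i}x_2$ and $x_4$ are single-valued on $\mathbb{C}\setminus\{0\}$, whereas $x_3$ is multivalued with period $-8\pi\,\mathrm{Im}\,\lambda$, which is precisely why $\mathrm{Im}\,\lambda\neq0$ forces passage to the simply connected universal cover. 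For $\lambda\in\mathbb{R}$ the data give the generalized catenoid (a real $\lambda$ being just a dilation), already embedded by Proposition~\ref{prop-catenoid}(1).

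For the helicoid, $\lambda=\mathrm{i}s$ with $s\in\mathbb{R}\setminus\{0\}$, so $x_3=-4s\theta$ recovers $\theta\in\mathbb{R}$ uniquely on the cover. With $\mathrm{Re}\,\lambda=0$ one reads off $x_1=2s\big(\tfrac{t^2+1}{r}-r\big)\sin\theta$ and $x_2=2s\big(r+\tfrac{t^2-1}{r}\big)\cos\theta$; since $|t|<1$, the factor $\tfrac{t^2+1}{r}-r$ is strictly decreasing and $r+\tfrac{t^2-1}{r}$ strictly increasing in $r>0$, and at least one of $\sin\theta,\cos\theta$ is nonzero, so $r$ is determined. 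Hence the map is injective and the generalized helicoid is embedded.

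For every other member ($a:=\mathrm{Re}\,\lambda\neq0$ and $b:=\mathrm{Im}\,\lambda\neq0$) I would exhibit self-intersections directly. Demanding that two base points $z_1\neq z_2$ share $x_1+\mathrm{i}x_2$ and $x_4$ gives, with $\delta=z_1-z_2$ and $P=z_1z_2$, the relations $\lambda\delta-\lambda t^2\delta/P-\bar\lambda\bar\delta/\bar P=0$ and $\lambda\delta/P+\bar\lambda\bar\delta/\bar P=0$; eliminating the common anti-holomorphic term $\bar\lambda\bar\delta/\bar P$ collapses them to the clean pair
\[
z_1z_2=t^2-1,\qquad \mathrm{Re}[\lambda(z_1-z_2)]=0 .
\]
Thus the self-intersection locus of the single-valued part is the curve $z_2=(t^2-1)/z_1,\ \mathrm{Re}\big[\lambda\big(z_1+\tfrac{1-t^2}{z_1}\big)\big]=0$, which is asymptotic to a line through the origin and so runs out to $|z_1|=\infty$ with $\arg z_1$ bounded. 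Along it the sheets on the cover can be matched in $x_3$ exactly when
\[
\Psi(z_1):=\tfrac{a}{b}\big(\ln(1-t^2)-2\ln|z_1|\big)-\pi+2\arg z_1\in 2\pi\mathbb{Z};
\]
because $a/b\neq0$ we have $\Psi\to\mp\infty$ as $|z_1|\to\infty$, so by the intermediate value theorem $\Psi$ meets $2\pi\mathbb{Z}$ infinitely often. Each such $z_1$ (with $z_1\neq z_2$, automatic once $|z_1|\neq\sqrt{1-t^2}$) yields a genuine double point, and these accumulate at $|z_1|=\infty$; this is exactly the failure of embeddedness of the unique end of the universal cover.

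The crux, and the step I expect to require the most care, is coupling the algebraic self-intersection locus of the single-valued coordinates to the multivalued height $x_3$: one must show the purely transcendental matching $\Psi(z_1)\in2\pi\mathbb{Z}$ can actually be met somewhere on that locus. This is secured by the intermediate value theorem once the elimination is done, since $z_1z_2=t^2-1$ keeps $\arg z_1$ bounded along the relevant branch while $\ln|z_1|\to\infty$ drives $\Psi$ through every residue. It is here that $a=\mathrm{Re}\,\lambda\neq0$ enters decisively—for the helicoid $a=0$ makes $\Psi$ bounded, consistent with embeddedness—and that the $x_4$-equation, vacuous only when $t=0$, must be invoked to fix $P$ exactly rather than merely $|P|=1$. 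The degenerate case $t=0$ is the classical catenoid--helicoid associate family inside $\mathbb{R}^3$ (there $x_4\equiv0$), where the weaker constraint $|z_1z_2|=1$ already suffices and non-embeddedness is classical; in all cases the two preimages have distinct modulus $r$, so the sheets cross transversally and the intersections cannot be removed by perturbation.
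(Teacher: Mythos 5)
Your proposal is correct and follows essentially the same route as the paper: explicit integration of \eqref{ex-helicoid}, the same three-way case split on $\lambda$, the same reduction of a self-intersection to $z_1z_2=t^2-1$ together with $\mathrm{Re}[\lambda(z_1-z_2)]=0$, and the same count of infinitely many solutions near $|z|=\infty$ by an intermediate-value/oscillation argument (you walk along the locus cut out by $x_1,x_2,x_4$ and let the $x_3$-matching function $\Psi$ sweep through $2\pi\mathbb{Z}$, whereas the paper walks along the logarithmic spirals cut out by $x_3$ and intersects them with the other locus --- the two are equivalent). The only substantive divergences are minor: your helicoid argument via monotonicity of the radial factors in $x_1,x_2$ also covers $t=0$, where the paper's use of the fourth component degenerates, and conversely in the generic case you defer $t=0$ to the classical $\mathbb{R}^3$ result even though your own sufficiency computation still applies there.
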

\begin{proof}
Given $\phi,\psi,\mathrm{d}h$ as in \eqref{eq-associate},
we write out the immersion explicitly:
\begin{equation}\label{eq-helicoid}
\vec{x}~ =~ 2~\mathrm{Re}\left[\lambda\left(z+\frac{t^2+1}{z},
-\mathrm{i}\Big(z+\frac{t^2-1}{z}\Big),
2\ln z,\frac{2t}{z}\right)\right].
\end{equation}
Note that $\ln z$ is a multi-valued function on $\mathbb{C}\backslash\{0\}$
whose imaginary part is given by the argument of $z$.
There are three cases to consider:

{\bf Case 1:} $\lambda=a\in \mathbb{R}$ is real and non-zero.
This time we get the generalized catenoid which is shown to be embedded
in Proposition~\ref{prop-catenoid}.

{\bf Case 2:} $\lambda=\mathrm{i}b$ is purely imaginary with $b\in \mathbb{R}$.

Notice that $\mathrm{Re}(\mathrm{i}b\ln z)$ (the third component)
determines the argument of $z$.
In other words, if $\vec{x}(z)$ and $\vec{x}(z')$ share the same third component, then
$z,z'\in\mathbb{C}\backslash\{0\}$ must be located
in the same ray emanating from $z=0$.
The fourth component of $\vec{x}$ is $\mathrm{Re}(\mathrm{i}bt/z)$, from which we
can fix the module $|z|$ when $z$ is not real.
Thus the imaginary and the real part of
the corresponding $w=\ln z\in \mathbb{C}$ on the universal covering
are both determined uniquely.

In the exceptional case when $z$ is on the real axis, because the second
component of $\vec{x}(z)$ is essentially $z-\frac{1-t^2}{z}$,
a monotonic function of the real variable $z$ when $t\in (-1,1)$,
we still conclude that $z\to \vec{x}(z)\in \mathbb{R}^4_1$ is injective.
This shows that a generalized helicoid is embedded.

{\bf Case 3:} $\lambda=a+\mathrm{i}b$ and $a,b\in \mathbb{R}$ are both non-zero.

Denote $z=\rho(\cos\theta+\mathrm{i}\sin\theta)\in\mathbb{C}\backslash\{0\}$,
and $w=\ln z=\ln\rho+ \mathrm{i}\theta\in\mathbb{C}$ is a lift to the universal covering space $\mathbb{C}$ on which the mapping $\vec{x}$
is single-valued. We are looking for self-intersection points, i.e., point
pairs $\{z,z'\}$ such that $\ln z\ne \ln z', \vec{x}(z)=\vec{x}(z')$.

Given the expression \eqref{eq-helicoid}, we may obtain the values
of $\mathrm{Re}(\lambda z),\mathrm{Re}(\frac{\lambda}{z})$ from the
first and the fourth components of $\vec{x}(z)$, and the value of
$\mathrm{Im}(\lambda z+\frac{\lambda(t^2-1)}{z})$ from the second
component. Combining these information together, we can derive the
value of $z+\frac{t^2-1}{z}$ from $\vec{x}(z)$. Thus $\vec{x}(z)=\vec{x}(z')$ implies
$z'=z$ or $z'=\frac{t^2-1}{z}$. The case $z'=z$ whose lifts differ
by $w'-w=2n\pi$ will not give self-intersection points (by comparing
the values of the third component of $\vec{x}(z)$). As a consequence, we have a self-intersection
if, and only if, the following equations are satisfied
simultaneously:
\begin{gather}
z'=\frac{t^2-1}{z},\label{eq-intersect1}\\
\mathrm{Re}(\lambda z)=\mathrm{Re}(\lambda z'),\label{eq-intersect2}\\
\mathrm{Re}(\lambda \ln z)=\mathrm{Re}(\lambda \ln z').
\label{eq-intersect3}
\end{gather}
Denote
\[
\lambda=a+\mathrm{i}b, ~\ln z=\ln\rho+ \mathrm{i}\theta,
~\ln z'=\ln(\frac{t^2-1}{z})=\ln(\frac{1-t^2}{\rho})+\mathrm{i}(2n+1)\pi
-\mathrm{i}\theta),
\]
where $n\in\mathbb{Z}$ is an arbitrary integer. We recognize that
the solutions $z=\rho \mathrm{e}^{\mathrm{i}\theta}$ are
located on a family of logarithmic spirals
\begin{equation}\label{eq-intersect4}
L_n:~2a\ln\rho-2b\theta=c_n
\end{equation}
where $c_n=a\ln(1-t^2)-b(2n+1)\pi$ is a constant depending only on
the values of the parameter $a,b,t,n$.

On the other hand, \eqref{eq-intersect1} and \eqref{eq-intersect2} imply
\begin{equation}\label{eq-intersect5}
\frac{a\cos\theta}{b\sin\theta}=\frac{\rho^2+t^2-1}{\rho^2-t^2+1}.
\end{equation}
Consider the limit process $z\to \infty$ along the logarithmic spiral $L_n$. The solution $\theta=\theta(\rho)$ of \eqref{eq-intersect4} is monotonically increasing when $\rho\to
+\infty$, and the range is $\theta(\rho)\in(-\infty,+\infty)$.
Then we insert this pair $(\rho,\theta(\rho))$ into \eqref{eq-intersect5}. The right hand side of \eqref{eq-intersect5} tends to
$1$, and the left hand side oscillates between $(-\infty,+\infty)$ periodically.
Thus there are infinitely many $(\rho,\theta)$
satisfying \eqref{eq-intersect4} and \eqref{eq-intersect5} simultaneously,
which finishes our proof.
\end{proof}

\section{The generalized Enneper surfaces}

\begin{example}[The generalized Enneper surface]
\label{exa-enneper} This is given by
\begin{equation}\label{eq-enneper1}
\phi=z,\ \psi=\frac{c}{z}~,\ \mathrm{d}h=s\cdot z\mathrm{d}z,\
\end{equation}
or
\begin{equation}\label{eq-enneper2}
\phi=z+1,\ \psi=\frac{c}{z}~,\ \mathrm{d}h=s\cdot z\mathrm{d}z,
\end{equation}
with complex parameters $c,s\in\mathbb{C}\backslash\{0\}$.
It is clear that this family of examples are complete without any periods.
It has no singular points if and only if the parameter $c=c_1+\mathrm{i}c_2$
is not zero or positive real numbers in the first case \eqref{eq-enneper1},
and
\begin{equation}\label{eq-enneper3}
c_1-c_2^2+\frac{1}{4}<0
\end{equation}
in the second case \eqref{eq-enneper2}.

In \eqref{eq-enneper1}, when $c=-1$ we obtain
the classical Enneper surface in $\mathbb{R}^3$; otherwise it is almost the same as the Al\'ias-Palmer deformation in
Example~\ref{exa-deform}.
\end{example}
\begin{theorem}\label{thm-enneper}
A completely immersed algebraic stationary surface in
$\mathbb{R}^4_1$ with $\int K=-4\pi$ and one regular
end is a generalized Enneper surface given above.
\end{theorem}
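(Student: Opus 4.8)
The classification proceeds by exactly the same strategy as the catenoid theorem (Theorem~\ref{thm-catenoid}), so I would model the argument on that proof and diverge only where the single-end/one-regular-end hypothesis forces a different normal form. First I would pin down the topology and the degree of the Gauss maps. Since $\int_M K\,\mathrm{d}M=-4\pi$ and the surface is completely immersed with only regular ends (no branch points, no singular ends), Corollary~\ref{GB2C} gives $4\pi(1-g-r)\ge -4\pi$, and with $r=1$ this forces $g=0$. By Huber's theorem $M$ is then biholomorphic to $\mathbb{C}P^1$ minus one point, i.e. $M\cong\mathbb{C}$ with the single end at $z=\infty$. Next, applying \eqref{eq-deg1}--\eqref{eq-deg2} with all indices vanishing (regular end) yields $\deg\phi=\deg\psi=1$, so both $\phi$ and $\psi$ are fractional linear in $z$.

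Then I would use the freedom in \eqref{trans} and a change of complex coordinate to normalize. The key difference from the catenoid case is that here there is a \emph{single} end. After a Lorentz motion I would arrange the value of $\phi$ (or $\bar\psi$) at the end, and after a Möbius change of $z$ fixing $\infty$ (i.e. $z\mapsto\alpha z+\beta$) I would normalize $\phi=z$ or $\phi=z+1$, matching the two stated normal forms in \eqref{eq-enneper1}--\eqref{eq-enneper2}. The two cases presumably arise according to whether $\psi$ has its pole coinciding with a distinguished point or not. Since $\phi,\psi$ each have degree one and $M=\mathbb{C}$, the pole of $\psi$ must sit at a finite point, which after normalization I would place at $z=0$, giving $\psi=c/z$ up to the additive/scaling freedom; the regularity condition 2) of Theorem~\ref{thm-period} then forces $\mathrm{d}h$ to have a simple zero cancelling this pole and no other poles on $\mathbb{C}$, so $\mathrm{d}h=s\,z\,\mathrm{d}z$. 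Completeness at the end $z=\infty$ together with Proposition~\ref{ineq-multiplicity} should fix the exponent on $z$ in $\mathrm{d}h$ and rule out the remaining possibilities, exactly as $k=2$ was forced for the catenoid.

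Finally I would impose the period conditions \eqref{eq-period} and the regularity condition $\phi\ne\bar\psi$ to cut down the parameters. Since $M=\mathbb{C}$ is simply connected, the period conditions are automatic, which simplifies this case relative to the catenoid. The substantive constraint is therefore $\phi\ne\bar\psi$ on all of $\mathbb{C}$: writing out $\phi-\bar\psi$ explicitly and comparing real and imaginary parts should reduce to the stated conditions, namely $c$ not a non-negative real in \eqref{eq-enneper1}, and the inequality \eqref{eq-enneper3} in \eqref{eq-enneper2}.

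\textbf{Main obstacle.} As flagged in Remark~\ref{rem-phipsibar}, the hard part is the non-existence analysis for $\phi(z)=\overline{\psi(z)}$: one must show that the normalized data admit no solution precisely under the stated parameter conditions, and conversely that any admissible surface reduces to one of these two normal forms after exhausting the Lorentz and coordinate freedoms. Justifying that the two normal forms \eqref{eq-enneper1} and \eqref{eq-enneper2} are exhaustive (and genuinely distinct) — i.e. that the residual Möbius/Lorentz freedom after fixing $\deg=1$ and the single end leaves exactly these two congruence classes — is the delicate bookkeeping step, and is where I would expect the proof to require the most care.
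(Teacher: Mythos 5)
Your proposal follows the same route as the paper's proof and most of it is right, but your very first inference fails: Corollary~\ref{GB2C} with $r=1$ gives $\int_M K\,\mathrm{d}M\le 4\pi(1-g-1)=-4\pi g$, hence only $g\le 1$, not $g=0$; a once-punctured torus is not excluded at this stage (it is for $r=2$, which is why this works in the catenoid theorem). The paper obtains the genus differently: from \eqref{eq-deg1}--\eqref{eq-deg2} with all indices zero one first concludes $\deg\phi=\deg\psi=1$, and a compact Riemann surface admitting a degree-one meromorphic function must be $\mathbb{C}P^1$, so $g=0$ and $\phi,\psi$ are fractional linear. Since you derive $\deg\phi=1$ anyway, your argument is repaired simply by reversing the order of these two steps.

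The remainder of your outline matches the paper. The dichotomy you guess at (``whether $\psi$ has its pole coinciding with a distinguished point'') is exactly the paper's case split: after normalizing $\phi(\infty)=\infty$, $\psi(\infty)=0$, either the zero of $\phi$ coincides with the pole of $\psi$ (place both at $z=0$, giving \eqref{eq-enneper1}) or not (place them at $z=-1$ and $z=0$, giving \eqref{eq-enneper2} after rescaling via \eqref{trans}). One simplification you missed: $\mathrm{d}h=s\,z\,\mathrm{d}z$ is forced by the regularity condition alone --- its zeros must coincide with the simple pole of $\psi$ at $z=0$, and it can have no finite poles since the only end is $z=\infty$ --- so neither completeness nor Proposition~\ref{ineq-multiplicity} is needed to pin down an exponent, unlike in the catenoid proof where $k=2$ had to be singled out. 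The final regularity analysis is the elementary computation you describe: $z\bar z+\bar z=\bar c$ forces $v=c_2$ and $u^2+u+c_2^2-c_1=0$, and nonexistence of real roots is precisely the discriminant condition \eqref{eq-enneper3}.
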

\begin{proof}
By \eqref{eq-deg1} \eqref{eq-deg2} and the assumption $\int K=-4\pi$,
$\phi,\psi$ must be meromorphic functions of degree $1$. Since on
tori and higher genus compact Riemann surfaces there are no such
meromorphic functions, the genus must be $0$
and $\phi,\psi$ are linear fractional functions. Suppose
$M=\mathbb{C}$ with an end at $z=\infty$.  Then Theorem~\ref{GB3}
implies that the end has multiplicity $3$.

As in the proof to Theorem~\ref{thm-catenoid}, without loss of
generality we may suppose $\phi(\infty)=\infty,~\psi(\infty)=0$ (up
to a Lorentz rotation in $\mathbb{R}^4_1$).

To further simplify the expressions of linear fractional functions
$\phi,\psi$, let us consider the unique pole of $\psi$ and the unique zero of $\phi$.
If these two points coincide, we may suppose this is the point $z=0$
(up to a linear fractional transformation of the coordinate $z$).
Then it is easy to see that we can simplify to get \eqref{eq-enneper1}.

Otherwise, suppose the pole of $\psi$ is $z=0$ and
the zero of $\phi$ is $z=-1$. (Using linear fractional transform
to change the complex coordinate $z$, we
may map three given points to $\infty,0,-1$ on the Riemann sphere.)
Then we have
\[
\phi=a(z+1),\ \psi=\frac{c}{z},\ \mathrm{d}h=s\cdot z\mathrm{d}z,\
a,c,s\in\mathbb{C}\backslash\{0\}.
\]
The height differential $\mathrm{d}h$ must have this form due to the
regularity condition (Theorem~\ref{thm-period}). Then using \eqref{trans}
\[
\phi\to \phi/a, ~\psi\to\psi/\bar{a},~ \mathrm{d}h\to |a|\cdot \mathrm{d}h,
\]
we get the desired Weierstrass data in \eqref{eq-enneper2}.

The period condition \eqref{eq-period} is obviously satisfied. By
the regularity condition $\phi\ne\bar\psi$, the parameters
$c=c_1+\mathrm{i}c_2$ must be chosen so that the equation
\[
0=z\bar{z}+\bar{z}-\bar{c}=u^2+v^2+u-\mathrm{i}v-c_1+\mathrm{i}c_2
\]
about $z=u+\mathrm{i}v$ has no solution. If there are some $z,c$ satisfying
this equation, comparing the imaginary and real parts separately
shows that
\[v=c_2,~~ u^2+u+c_2^2-c_1=0.\]
Thus $\phi\ne\bar\psi$ for any $w\in\mathbb{C}\cup\{\infty\}$ if, and only if, the discriminant of the quadratic
equation above (about $u$) is negative, i.e.
$c_1-c_2^2+\frac{1}{4}<0.$ This is exactly \eqref{eq-enneper3}.
\end{proof}

\begin{remark}
We have classified complete algebraic stationary surfaces in $\mathbb{R}^4_1$ with total curvature $-\int K \mathrm{d}M=4\pi$
\cite{Ma-4pi} in another paper. Such surfaces are exactly the generalized catenoids and Enneper surfaces. This generalizes
Osserman's classification result in $\mathbb{R}^3$.
\end{remark}

The Enneper surface in $\mathbb{R}^3$ always
intersect with itself along a curve, hence the Enneper end is not embedded.
In contrast, our deformation in $\mathbb{R}^4_1$ in general
has an embedded end (as demonstrated in Proposition~\ref{prop-enneper4}),
or could be embedded globally in the following situation.
\begin{proposition}\label{prop-enneper}
The generalized Enneper surface in \eqref{eq-enneper2} is embedded
when the parameter $c<-\frac{1}{4}$ is real and $s$ is not real or purely imaginary.
\end{proposition}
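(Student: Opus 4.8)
The plan is to produce the immersion $\vec{x}$ explicitly from the Weierstrass data \eqref{eq-enneper2} and then prove directly that $\vec{x}(z)=\vec{x}(z')$ forces $z=z'$. Integrating \eqref{x} with $\phi=z+1$, $\psi=c/z$ and $\mathrm{d}h=sz\,\mathrm{d}z$ is routine; what matters is to extract a few clean combinations of the four real coordinates. Using that $c$ is real, one finds the holomorphic-plus-antiholomorphic expression
\[
x_1+\mathrm{i}x_2=\tfrac{2s}{3}z^3+sz^2+2c\bar{s}\bar{z},
\]
together with $x_3+x_4=2\mathrm{Re}(sz^2)$ and $x_4-x_3=2c\,\mathrm{Re}\big(s(z^2+2z)\big)$. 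Since $c\neq 0$, the last two identities recover $\mathrm{Re}(sz)$ and $\mathrm{Re}(sz^2)$ from $x_3,x_4$ alone, which is the leverage I need.

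Next I would put $a=z-z'$ and assume $a\neq 0$, seeking a contradiction. Matching $x_3+x_4$ gives $\mathrm{Re}\big(s(z^2-z'^2)\big)=0$, and then matching $x_4-x_3$ gives $\mathrm{Re}\big(s(z-z')\big)=0$; the latter says $sa=\mathrm{i}\tau$ is purely imaginary with $\tau\in\mathbb{R}\setminus\{0\}$. Writing $s(z^2-z'^2)=sa\,(z+z')=\mathrm{i}\tau(z+z')$ and taking real parts forces $\mathrm{Im}(z+z')=0$, so $\mu:=\tfrac12(z+z')$ is real. Feeding $sa=\mathrm{i}\tau$ and $z+z'=2\mu$ into the difference of the first identity (via $z^2+zz'+z'^2=3\mu^2+\tfrac{a^2}{4}$ and $\bar{s}\bar{a}=\overline{sa}=-\mathrm{i}\tau$) collapses everything to $\mathrm{i}\tau\big(2\mu^2+2\mu-2c+\tfrac{a^2}{6}\big)=0$; as $\tau\neq 0$ the bracket vanishes, giving
\[
a^2=12\,(c-\mu-\mu^2).
\]

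The conclusion is then read off from this relation, and here the two hypotheses act in complementary roles. The right-hand side is real, so $a^2\in\mathbb{R}$ and $a$ is either real or purely imaginary. If $a$ is real and nonzero then $a^2>0$; but $c<-\tfrac14$ makes $\mu^2+\mu-c=(\mu+\tfrac12)^2+(-\tfrac14-c)>0$, so the right-hand side is negative, a contradiction. If $a$ is purely imaginary and nonzero, then $sa$ purely imaginary forces $s\in\mathbb{R}$, contradicting $s\notin\mathbb{R}$. Hence $a=0$, i.e. $z=z'$, and $\vec{x}$ is injective; since the unique end has cubic growth the map is also proper, so it is an embedding. I expect the one delicate point to be the bookkeeping of the middle step — checking that after imposing $\mathrm{Re}(sa)=0$ and $\mathrm{Im}(z+z')=0$ the full system really does reduce to the single relation $a^2=12(c-\mu-\mu^2)$ — while the closing case analysis is immediate once that relation is in hand.
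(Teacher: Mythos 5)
Your proof is correct, and while the setup coincides with the paper's — both recover $\mathrm{Re}(sz^2)$ and then $\mathrm{Re}(sz)$ from $x_3\pm x_4$ (using $c\in\mathbb{R}\setminus\{0\}$) and package $x_1,x_2$ into the single complex quantity $\tfrac{2s}{3}z^3+sz^2+2c\bar{s}\bar{z}$ — the endgame is genuinely different. The paper normalizes $s=\mathrm{e}^{\mathrm{i}\theta}$, passes to rotated real coordinates $w=sz=u+\mathrm{i}v$, deduces $u_1=u_2$, and then applies Vieta's formulas to two quadratics in $v$ to get two expressions for $v_1+v_2$; equating them produces a quadratic in $u_1$ whose discriminant $(1+4c)\cos^2\theta$ is negative. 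You instead work with the symmetric variables $a=z-z'$ and $\mu=\tfrac12(z+z')$, show $sa=\mathrm{i}\tau$ with $\tau\in\mathbb{R}\setminus\{0\}$ and $\mu\in\mathbb{R}$, and collapse everything to $a^2=12(c-\mu-\mu^2)$; I checked the bookkeeping ($z^2+zz'+z'^2=3\mu^2+a^2/4$, $\bar{s}\bar{a}=-\mathrm{i}\tau$, bracket $2\mu^2+2\mu-2c+\tfrac{a^2}{6}$) and it is right. Your finish is arguably the cleaner one: the dichotomy between $a$ real and $a$ purely imaginary displays exactly where each hypothesis enters — $c<-\tfrac14$ forces $c-\mu-\mu^2=-\bigl((\mu+\tfrac12)^2+(-\tfrac14-c)\bigr)<0$ and kills the real case, while $s\notin\mathbb{R}$ kills the imaginary case — whereas in the paper the hypothesis $s\notin\mathbb{R}$ is consumed somewhat implicitly in arguing $\cos\theta\neq0$ and $c<-\tfrac14$ surfaces only in the final discriminant. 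A small bonus of your write-up is the closing remark that injectivity must be supplemented by properness to conclude embeddedness; the paper leaves that point implicit.
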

\begin{proof}
Using \eqref{eq-enneper2}, we write out a generalized Enneper
surface explicitly:
\[
\vec{x}=\mathrm{Re}\left[s\left( \frac{z^3}{3}+\frac{z^2}{2}+cz,
-\mathrm{i}\Big(\frac{z^3}{3}+\frac{z^2}{2}-cz\Big),
\frac{1-c}{2}z^2-cz,\frac{1+c}{2}z^2+cz\right)\right]~.
\]
If there is an intersection with $\vec{x}(z_1)=\vec{x}(z_2)$ for some $z_1\ne
z_2$, by the expression above, any of the following three functions
must take the same value at $z_1$ and $z_2$:
\begin{equation}\label{eq-embed4}
\mathrm{Re}(sz^2),~\mathrm{Re}(cs\frac{z^2}{2}-csz),~
\frac{sz^3}{3}+\frac{sz^2}{2}+\bar{s}\bar{c}\bar{z}.
\end{equation}
When $c\in\mathbb{R}\backslash\{0\}$, from the first two functions
above we deduce that $\mathrm{Re}(sz)$ also takes the same value at
$z_1$ and $z_2$.

Without loss of generality, suppose $s=\mathrm{e}^{\mathrm{i}\theta}$. By assumption
$\sin\theta\ne 0,\cos\theta\ne 0$. Denote
\[w=sz=u+\mathrm{i}v,~~~~u,v\in\mathbb{R}.\]
Inserting this into functions in \eqref{eq-embed4} and $\mathrm{Re}(sz)$, we see that each of the following three functions of $w$ must take the same value
at $w_1=u_1+\mathrm{i}v_1$ and $w_2=u_2+\mathrm{i}v_2$:
\begin{equation}\label{eq-embed3}
\mathrm{Re}(w); ~~\mathrm{Re}(\bar{s}w^2); ~~
\mathrm{Re}\left(\frac{w^3}{3}+\frac{sw^2}{2}+cs^2\bar{w}\right).
\end{equation}
By the first one, $u_1=u_2$. Combined with the second one, we obtain
a function of $v$ which takes the same value for $v=v_1,v_2$ (since $w_1\ne w_2$, we know $v_1\ne v_2$) as below:
\[
\cos\theta \cdot v^2-\sin\theta\cdot 2u_1v.
\]
By our assumptions, $s$ is not purely imaginary, so $\cos\theta\ne 0$, and this is a quadratic function. Vieta's formula implies
\begin{equation}\label{eq-embed1}
v_1+v_2=2u_1\tan\theta.
\end{equation}
Similarly, by the third function in \eqref{eq-embed3} above and
$u_1=u_2$, we know the quadratic function of $v$
\[
-(u_1+\frac{\cos\theta}{2})v^2+(c\sin 2\theta -u_1\sin\theta )v
\]
takes the same value at $v=v_1,v_2$. The coefficient
$u_1+\frac{\cos\theta}{2}$ is non-zero. (Otherwise, if  $u_1+\frac{\cos\theta}{2}=0=c\sin 2\theta
-u_1\sin\theta$, we deduce $c=\frac{-1}{4}$; if
$u_1+\frac{\cos\theta}{2}= 0\ne c\sin 2\theta -u_1\sin\theta$, there must be $v_1=v_2$. In both cases we find contradiction.) Again by
Vieta's formula,
\begin{equation}\label{eq-embed2}
v_1+v_2=\frac{c\sin 2\theta
-u_1\sin\theta}{u_1+\frac{\cos\theta}{2}}.
\end{equation}
Combining \eqref{eq-embed1} with \eqref{eq-embed2}, we find that
$u_1\in\mathbb{R}$ satisfies
\[
u_1^2+\cos\theta u_1-c\cos^2\theta=0.
\]
Thus the discriminant of this quadratic function of $u_1$ is
non-negative. This is a contradiction with our assumption that $c<\frac{-1}{4}$ and $\cos\theta\ne 0$.
This finishes our proof.
\end{proof}
\begin{remark}
In general, embeddedness and transversal self-intersection are both
open properties with respect to the parameter $s$. Thus it is
interesting to know what will happen when $s$ tends to a real number
for the generalized Enneper surfaces as in
Proposition~\ref{prop-enneper}. One can verify that $\vec{x}$ intersect
itself along a curve when $s$ is a non-zero real number, which is
consistent with the open property just mentioned, because such an
intersection pattern is not transversal.
\end{remark}

For \eqref{eq-enneper2} with other values of parameter $c,s$, in
general it is hard to determine whether the corresponding Enneper
surfaces are embedded or not. But in the special case of
\eqref{eq-enneper1}, we have a clear conclusion. We may state this
result for a class of similar examples as below:
\begin{proposition}\label{prop-enneper4}
The simply connected Enneper-type surface $\vec{x}:\mathbb{C}\to
\mathbb{R}^4_1$ with
\begin{equation}\label{eq-enneper4}
\phi=z^k,\ \psi=\frac{c}{z^k}~,\ \mathrm{d}h=s\cdot z^k\mathrm{d}z, ~~
k\in\mathbb{Z}^+, ~c,s\in\mathbb{C}\backslash\{0\}
\end{equation}
is regular with two self-intersection points when
$c\notin\mathbb{R}$. Thus outside a compact subset it has an
embedded end (of multiplicity $d=2k+1$).
\end{proposition}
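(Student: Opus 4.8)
The plan is to write the immersion out in coordinates, settle regularity and the multiplicity of the end by direct inspection, and then reduce the self-intersection equation $\vec{x}(z_1)=\vec{x}(z_2)$ to a single complex equation whose solutions I can count exactly. Integrating \eqref{eq-enneper4} gives
\[
\vec{x}=2\,\mathrm{Re}\left[s\left(\frac{z^{2k+1}}{2k+1}+cz,\ -\mathrm{i}\Big(\frac{z^{2k+1}}{2k+1}-cz\Big),\ \frac{(1-c)z^{k+1}}{k+1},\ \frac{(1+c)z^{k+1}}{k+1}\right)\right].
\]
Regularity is immediate from \eqref{eq-metric}: a singular point would need $\phi=\bar\psi$, i.e. $z^k=\bar c/\bar z^k$, hence $|z|^{2k}=\bar c$, which is impossible unless $c$ is a positive real number; thus $c\notin\mathbb{R}$ forces $\phi\neq\bar\psi$ on all of $\mathbb{C}$ (at $z=0$ the pole of $\psi$ is cancelled by the zero of $\mathrm{d}h$, so $z=0$ is an ordinary regular point). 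For the multiplicity I would pass to $w=1/z$ near the only end $z=\infty$ and read off the top-order pole of $\vec{x}_z\,\mathrm{d}z$, which is of order $2k+2$; hence $d+1=2k+2$, i.e. $d=2k+1$ (consistent with $\int_M K\,\mathrm{d}M=-4\pi\deg\phi=-4\pi k$ and the Jorge--Meeks formula \eqref{eq-jorgemeeks2}).

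For the self-intersection count I would first combine the last two components. Their sum and difference give $\mathrm{Re}\big[s(z_1^{k+1}-z_2^{k+1})\big]=0$ and $\mathrm{Re}\big[sc(z_1^{k+1}-z_2^{k+1})\big]=0$. Since $c\notin\mathbb{R}$, the numbers $s$ and $sc$ are linearly independent over $\mathbb{R}$, so these two real equations force $z_1^{k+1}=z_2^{k+1}$; as $z_i\neq0$, this means $z_2=\zeta z_1$ with $\zeta^{k+1}=1$ and (for a genuine crossing) $\zeta\neq1$. This is exactly the step where the hypothesis $c\notin\mathbb{R}$ is essential and where the behaviour departs from $\mathbb{R}^3$.

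Next I would use the identity $x_1+\mathrm{i}x_2=2\big(s\,\tfrac{z^{2k+1}}{2k+1}+\bar{s}\,\bar{c}\,\bar{z}\big)$, under which the substitution $z\mapsto\zeta z$ multiplies $x_1+\mathrm{i}x_2$ by $\bar\zeta$ while leaving $x_3,x_4$ fixed; geometrically $z\mapsto\zeta z$ is realised by a rotation of the $(x_1,x_2)$-plane, so the surface carries a $\mathbb{Z}_{k+1}$ rotational symmetry. Hence $\vec{x}(z_1)=\vec{x}(\zeta z_1)$ with $\zeta\neq1$ is possible only when $x_1+\mathrm{i}x_2$ vanishes, i.e. when
\[
s\,\frac{z_1^{2k+1}}{2k+1}+\bar{s}\,\bar{c}\,\bar{z}_1=0;
\]
call this equation $(\ast)$. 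In other words, the self-intersection points are precisely the points of the surface lying on the axis plane $\{x_1=x_2=0\}$ that get identified by this symmetry. Solving $(\ast)$, comparison of moduli pins the modulus to the single value $|z_1|=\big((2k+1)|c|\big)^{1/(2k)}$, while comparison of arguments yields $2k+2$ equally spaced angles on that circle. Multiplication by $\zeta=\mathrm{e}^{2\pi\mathrm{i}/(k+1)}$ advances the angle by two of these spacings, so the $2k+2$ roots split into exactly two orbits of size $k+1$; each orbit maps to a single point of $\{x_1=x_2=0\}$. These two image points are distinct: the value of $z^{k+1}$ is constant on each orbit, equal to $A$ on one and $-A$ on the other with $A\neq0$, and equality of the images would force $\mathrm{Re}[sA]=\mathrm{Re}[scA]=0$, hence $A=0$ by the same $\mathbb{R}$-independence — a contradiction. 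This gives exactly two self-intersection points; and since every solution of $(\ast)$ has $|z|=\big((2k+1)|c|\big)^{1/(2k)}$, the map is injective on the exterior $\{|z|>((2k+1)|c|)^{1/(2k)}\}$, so outside a compact disc the end at $z=\infty$ is embedded, with multiplicity $2k+1$.

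I expect the only delicate point to be the final bookkeeping — showing that the $2k+2$ solutions of $(\ast)$ collapse to precisely two distinct points of $\mathbb{R}^4_1$ (rather than to one point, or to more) via the $\mathbb{Z}_{k+1}$ symmetry, and in particular that the two orbit-images do not coincide. The regularity and multiplicity computations, and the reduction to $(\ast)$, are routine once the $\mathbb{R}$-linear independence of $s$ and $sc$ is exploited.
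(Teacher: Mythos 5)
Your proposal is correct and takes essentially the same route as the paper's proof: both reduce $\vec{x}(z_1)=\vec{x}(z_2)$ via the $\mathbb{R}$-linear independence of $s$ and $sc$ (equivalently $c\notin\mathbb{R}$) in the last two components to $z_1^{k+1}=z_2^{k+1}$, then use the first two components to force $\frac{s}{2k+1}z^{2k+1}+\bar{s}\bar{c}\bar{z}=0$, whose $2k+2$ nonzero roots on a single circle split into two orbits of size $k+1$ under multiplication by $(k+1)$-th roots of unity. Your explicit verification that the two orbit images are distinct, and the pole-order computation giving $d=2k+1$, are details the paper leaves implicit but fit its argument exactly.
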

\begin{proof}
The regularity and completeness is easy to show. In particular, we
know $\phi\ne\bar\psi$ for $z\in\mathbb{C}$ (otherwise from
$\phi=z^k,\psi=c/z^k$ we deduce $c\in \mathbb{R}$, a contradiction),
and the end at $z=\infty$ is not a singular end.

By \eqref{eq-enneper4} we obtain
\[
\vec{x}=\mathrm{Re}\left[s\left( \frac{z^{2k+1}}{2k+1}+cz,
-\mathrm{i}\Big(\frac{z^{2k+1}}{2k+1}-cz\Big),
\frac{1-c}{k+1}z^{k+1},\frac{1+c}{k+1}z^{k+1}\right)\right]~.
\]
We compare the third and the fourth components of $\vec{x}$, which are
both linear combination of the real and imaginary parts of
$z^{k+1}$. Since $c\notin\mathbb{R}$, these two combinations are
linearly independent over $\mathbb{R}$. Thus when there is a
self-intersection with $\vec{x}(z)=\vec{x}(w), z\ne w$, we conclude
\[
z^{k+1}=w^{k+1},~~w=z \cdot \mathrm{e}^{\frac{2\pi\mathrm{i}}{k+1}}.
\]
The first and second components of $\vec{x}(z),\vec{x}(w)$ are equal, hence
$f(z)=f(w)$ where
\[
f(z)=\frac{s}{2k+1}z^{2k+1}+\bar{s}\bar{c}\bar{z}.
\]
So $ f(z)=f(w)=f(z \mathrm{e}^{\frac{2\pi\mathrm{i}}{k+1}})=\mathrm{e}^{\frac{-2\pi\mathrm{i}}{k+1}}f(z)$, and
\[
f(z)=0,~~\text{i.e.}~\frac{s}{2k+1}z^{2k+1}=-\bar{s}\bar{c}\bar{z}.
\]
There are $2k+2$ solutions to this equation, which have the same
module $\sqrt[2k]{(2k+1)|c|}$ (a fixed constant), and differ with
each other by a factor $\lambda^j (0\le j\le 2k+1)$ with
$\lambda=\mathrm{e}^{\frac{\pi \sqrt{-1}}{k+1}}$. Given one solution $z_0$,
then $\{z_0,z_0\lambda^2,\cdots,z_0\lambda^{2k}\}$ are mapped by $\vec{x}$
to the same point in $\mathbb{R}^4_1$, which is a self-intersection
of multiplicity $k+1$. For
$\{z_0\lambda,z_0\lambda^3,\cdots,z_0\lambda^{2k+1}\}$ we get the
other self-intersection. This finishes our proof.
\end{proof}
\begin{remark}
The example \eqref{eq-enneper4} can be viewed as a generalization of
both the example \eqref{eq-enneper1} and the Enneper surfaces in
$\mathbb{R}^3$ with higher dihedral symmetry.
\end{remark}

\section{Open problems}

The previous results on finite total Gaussian curvature and on embedded examples
motivate us to consider some deeper and harder problems.

\begin{problem}\label{prob-essential}
Introduce suitable index for a stationary end
$\vec{x}:D\backslash\{0\}\to\mathbb{R}^4_1$ whose Gauss map $\phi$ or
$\psi$ has an essential singularity at $z=0$ and the integral of
Gaussian curvature converges absolutely around this end.
(Then we shall establish a Gauss-Bonnet type theorem for complete stationary surfaces
which must involve such indices and the total Gaussian curvature.)
\end{problem}

We have constructed examples like \eqref{exa-essen} with finite
total Gaussian curvature and essential singularities for the Gauss
maps $\phi,\psi$. It is surprising that the total Gaussian curvature
$\int K\mathrm{d}M$ (as well as $\int K^{\perp} \mathrm{d}M$) are still quantized.

Indeed, according to \cite{Hartman}, under the assumption of finite
total curvature, the area of geodesic balls of radius $r$ at any
fixed point must grow at most quadratically in $r$. Moreover,
\begin{equation}\label{eq-hartman}
\int_M K \mathrm{d}M +\lim_{r\to \infty}\frac{2A(r)}{r^2}=2\pi \chi(M).
\end{equation}
This formula unifies Jorge-Meeks formula \eqref{eq-jorgemeeks} (see
also Theorem~\ref{GB3}) and other Gauss-Bonnet type formulas.

In our opinion, it looks plausible to introduce some topological
index for a wide class of essential singularities, which we desire
to be simple to compute and coincide with the area growth rate above
when the end has finite total curvature. Once this is done we obtain
Gauss-Bonnet type theorem by \eqref{eq-hartman}. Regretfully it is
unclear whether finite total curvature can determine the types of
essential singularities of the Gauss maps $\phi,\psi$ at one end in
a satisfying way.

\begin{problem}\label{prob-collin}
Can we extend Collin's theorem to $\mathbb{R}^4_1$? In other words,
assume that $\vec{x}:M\to\mathbb{R}^4_1$ is a properly embedded complete
stationary surface of finite topological type (i.e., $M$ is
homeomorphic to a compact surface with finite punctures), and the
number of ends is at least two, does $\vec{x}$ always has finite total
Gaussian curvature?
\end{problem}

In Section~6 to 8 we have constructed many embedded complete
stationary surfaces in $\mathbb{R}^4_1$, and refuted the conclusions
of the Lopez-Ros theorem and Meeks-Rosenberg theorem under the
assumption of embeddedness (see the introduction and Section~6).
A deeper result on properly embedded minimal surfaces in
$\mathbb{R}^3$ is

\medskip
\textbf{Collin's theorem~\cite{Collin}}~~
If $M\subset\mathbb{R}^3$ is a properly embedded minimal surface
with finite topology and more than one end, then $M$ has finite
total Gaussian curvature.

\medskip
It seems that embeddedness is not quite restrictive for complete
stationary surfaces in the 4-dimensional Lorentz space according to our observations before. So at the beginning we tried to find simple counterexamples to the conclusion of Collin's theorem in
$\mathbb{R}^4_1$.

The first attempt is by using the
stationary graph like Example~\ref{exa-graph1}, which is complete,
embedded, with infinite total curvature. If we can modify \eqref{eq-graph1} by introducing one more end while preserving
other proterties, then we get a counterexample. So we consider
\[
\vec{x}_z\mathrm{d}z=(1,\sqrt{2}\mathrm{i},
\cosh(z^2+z^{-2}),\sinh(z^2+z^{-2}))\mathrm{d}z.
\]
Obviously, $\vec{x}_z$ is still isotropic, and the term $z^2+z^{-2}$ is to
introduce a new pole at $z=0$ without any periods.
By the first two components, $\vec{x}=\mathrm{Re}\int \vec{x}_z \mathrm{d}z$ is still a graph and embedded in $\mathbb{R}^4_1$.
It is also not difficult to compute out the Gauss maps
$\phi,\psi$ explicitly and then to show the total absolute curvature diverges at $z=\infty$. Thus it meets our expectation perfectly. Unfortunately, it fails to be complete at $z=0$. So this is not a counterexample.

The second candidate modifies the construction above in another way:
\[
\vec{x}_z\mathrm{d}z=\left(z^k-\frac{1}{z^k},
-\mathrm{i}\big(z^k+\frac{1}{z^k}\big),
2\cosh(z),2\sinh(z)\right)\mathrm{d}z.
\]
The Gauss maps are $\phi=z^k \mathrm{e}^z,~\psi=-\frac{\mathrm{e}^z}{z^k}$,
which is the same as Example~\ref{exa-essen2} and \ref{exa-essen3}. So it has finite total curvature when $k\ge 2$.
If $k=1$, although the total curvature diverges as desired, the period condition is not satisfied and the universal covering surface is $\mathbb{C}$ with a unique end. In either case it is not a counterexample.

The third candidate is the generalized catenoid in
Example~\ref{exa-catenoid} which is complete, embedded, with genus
zero and two ends; but now we take $t=1$, i.e.
\[
\phi=z+1,\ \psi=\frac{-1}{z-1},\ \mathrm{d}h=\frac{z-1}{z^2}\mathrm{d}z.
\]
This is still regular and embedded as shown in
Proposition~\ref{prop-catenoid}. Since now we have a
bad singular end $z=0$, the total Gaussian curvature does not
converge absolutely (see Section~4). The trouble is that
it is not complete at the singular end $z=0$ as required
(because $\mathrm{d}s=|\vec{x}_z||\mathrm{d}z|=|\phi-\bar\psi||\mathrm{d}h|
=|\frac{z\bar{z}+\bar{z}-z}{z\bar{z}}||\mathrm{d}z|$, and the integration of $|\vec{x}_z||\mathrm{d}z|$ along the real axis
$z\in\mathbb{R}$ gives a finite number).

Because simple counter-examples are not found, we suspect that
Collin's theorem might still be true for
stationary surfaces in $\mathbb{R}^4_1$. This question might deeply influence our understanding of
the embedding problem as well as the surface class with finite total curvature.

\begin{problem}\label{prob-normalK}
Assume that a complete stationary surface in $\mathbb{R}^4_1$ has
finite total Gaussian curvature, i.e. $\int |K| \mathrm{d}M<+\infty$.
Does it imply that $\int |-K+\mathrm{i}K^\bot| \mathrm{d}M<+\infty$?
\end{problem}

This problem was mentioned in Remark~\ref{rem-totalK}. In particular,
if the conclusion is true, we conjecture that
the integration of the normal curvature vanishes, i.e., $\int K^\bot \mathrm{d}M=0.$
See Remark~\ref{rem-normalK} for related discussions.

\begin{problem}\label{prob-fujimoto}
Estimate the possible number of exceptional values of the Gauss maps $\phi$
and $\psi$ of complete stationary surfaces in $\mathbb{R}^4_1$. In
particular, can we generalize Fujimoto's Theorem \cite{Fujimoto}to
$\mathbb{R}^4_1$?
\end{problem}

When the complete stationary surface is assumed to be algebraic, we
can use the same method as Osserman \cite{Osser} to show that if the
surface has no singular ends, then the exceptional values of either of
$\phi,\psi$ is no more than $4$ \cite{Ma-exceptional}. If we
allow singular ends, then we can obtain a weaker conclusion that at
least one of the exceptional values of any of $\phi$ and $\psi$ is
no more than $4$. If we add the condition of finite total Gaussian
curvature, then the number $4$ could be replaced by $3$. All these
are similar to the case of $\mathbb{R}^3$. The next step is to
assume only completeness and try to generalize Fujimoto's Theorem
\cite{Fujimoto}.

Note that it is also interesting to consider the upper bound of the
exceptional spacelike or timelike directions \cite{Ma-exceptional}.

\begin{problem}
Apply our theory to study spacelike Willmore surfaces in Lorentzian
space forms and Laguerre minimal surfaces in $\mathbb{R}^3$.
\end{problem}

As mentioned in the introduction, this is the original motivation
for our exploration reported in this paper. Stationary surfaces in
$\mathbb{R}^4_1$ are special examples of spacelike Willmore surfaces
in Lorentzian space forms \cite{Ma-Wang}. When being complete with
planar ends (the definition is similar to the case of
$\mathbb{R}^3$), such surfaces will compactify to be compact
Willmore surfaces in $\mathbb{Q}^4_1$, the universal
compactification of 4-dimensional Lorentz space forms. This
construction yields all compact Willmore 2-spheres in
$\mathbb{Q}^4_1$ \cite{Ma-Wang}, \cite{WangP}. In particular, it is
interesting to know whether there exist Willmore 2-spheres in
$\mathbb{Q}^4_1$ with Willmore functional $4\pi k, (k=2,3,5,7)$.
Note that $k=2,3,5,7$ are all exceptional values for immersed Willmore
2-spheres in $\mathbb{S}^3$ \cite{Bryant84}, \cite{Bryant88}, \cite{Heller}.

A stationary surface in $\mathbb{R}^4_1$ also corresponds to the
so-called Laguerre Gauss map of a Laguerre minimal surface in
$\mathbb{R}^3$ \cite{CPWang2008}. So our theory provide a direct
method to construct examples of Laguerre minimal surfaces and to
study their global geometry.

\begin{problem}\label{prob-phipsi}
Can we have any general result about the equation
\[\phi=\bar\psi~~~~~~~~~~(*)\]
for a pair of meromorphic functions $\phi,\psi$ over a Riemann surface?
\end{problem}

To the best of our knowledge, there is no general theory on such equations.
We collect some known facts on this problem.
\begin{itemize}
\item  There exist many pairs of functions $\{\phi,\psi\}$
for which there is no solutions to equation $(*)$.
For example, in Example~\ref{exa-deform},~\ref{exa-catenoid} and
\ref{exa-enneper} we take $\psi\equiv\frac{c}{\phi+a}$.
When the parameter $a,c$ are chosen suitably there is no solution to
$\frac{c}{\phi+a}=\psi=\bar\phi$. Another example is Example~\ref{exa-essen}.

\item  For meromorphic functions $\phi,\psi$ over compact Riemann surface $M$,
if there is no solution to equation $(*)$, then $\deg\phi=\deg\psi$ (see Corollary~\ref{cor-deg} or Theorem~\ref{GB2}). This a necessary but not sufficient condition for the non-existence of solutions.

\item Regard $(*)$ as zeros of complex harmonic function $\phi-\bar\psi$.
Since this is a complex-valued function with convergent power series
(analytic function), its zero locus is a union of isolated points and
some analytic arcs which might meet at some vertex (at each vertex there are
finitely many of such arcs).
\end{itemize}

In two concrete problems we need to deal with this technical trouble. One is in the classification of complete algebraic stationary surfaces with total curvature $-\int K \mathrm{d}M=4\pi$ \cite{Ma-4pi}, where we rule out the
case with good singular end by showing that there always exist other singular points with $\phi=\bar\psi$.

The other one is in Problem~1. When considering one end with an essential singularity of
given $\phi, \psi$ , we want to know whether
there will be infinitely many singular points $z$ such that $\phi(z)=\overline{\psi(z)}$
in the neighborhood of this end. By the theorem of Weierstrass,
around an essential singularity of a holomorphic function $\phi$,
it may take almost every complex value infinitely many times.
Thus the conclusion to our problem seems quite unclear.\\

\noindent\textbf{Final remarks}\\

We may compare to the theory of minimal surfaces in $\mathbb{R}^4$.
In that case, we still have a pair of Gauss maps $\phi,\psi$ into
$\mathbb{C}P^1\times\mathbb{C}P^1$. This target space is endowed with
its standard K\"ahler form, and the unitary group action induced from
$\mathbb{C}P^3$. So it suffices to study the K\"ahler geometry of
$\mathbb{C}P^1\times\mathbb{C}P^1$. In particular,
Osserman's theorem \cite{Osser1964} and Fujimoto's theory \cite{Fujimoto}
are based on this observation.

For stationary surfaces in $\mathbb{R}^4_1$, the two Gauss maps
together gives a mapping
\begin{align*}
(\phi,\bar\psi):~M~~&\longrightarrow \quad Q^2\triangleq
\{(z,\bar{w})\in\mathbb{C}P^1\times\mathbb{C}P^1~|~z\ne \bar{w}\},\\
p~~&\mapsto \qquad (\phi(p),\overline{\psi(p)}).
\end{align*}
The target space is an open 2-dimensional complex manifold
homeomorphic to $S^2\times S^2$ with the diagonal removed.
The projective action of the Lorentz isometries on $\mathbb{R}^4_1$
induces an action of $SL(2,\mathbb{C})$ on this $Q^2$ (Remark~\ref{rem-trans}):
\[
A\cdot(z,\bar{w})~~\mapsto~~\left(\frac{az+b}{cz+d},~
\frac{a\bar{w}+b}{c\bar{w}+d}\right),~~~~~
A=\begin{pmatrix}a & b \\ c & d\end{pmatrix}\in SL(2,\mathbb{C}).
\]
Under this action we have an invariant complex 2-form
\begin{equation}
\Theta\triangleq\frac{1}{(z-\bar{w})^2}~dz\wedge d\bar{w},
\end{equation}
whose pull-back to $M$ via the mapping
$(\phi,\bar\psi)$ is exactly the curvature form
$-K+\mathrm{i}K^\bot$ up to a constant by \eqref{eq-totalcurvature}.
Thus we have to study a new geometry of
\[\left(Q^2,~\Theta,~SL(2,\mathbb{C})\right),\]
which is related with both
the problem of finite total curvature (Problem~\ref{prob-essential},
\ref{prob-collin}, \ref{prob-normalK})
and the value distribution problem of $\phi,\psi$
(Problem~\ref{prob-fujimoto}, \ref{prob-phipsi}).

Now let us explain several previous results from this viewpoint.
It is easy to see that there is no $SL(2,\mathbb{C})$-invariant
area form on any component of $\mathbb{C}P^1\times\mathbb{C}P^1$.
Thus Osserman's original argument (that finite total Gaussian curvature
implies no essential singularity of the Gauss map) does not apply at here.
In particular we have counter-examples in Section~3.

Singular points/ends now appear as intersections of the mapping
$(\phi,\bar\psi)$ with the diagonal
$\{(z,\bar{w})\in\mathbb{C}P^1\times\mathbb{C}P^1~|~z=\bar{w}\}$,
on which $\Theta$ tends to $\infty$. Thus at singular ends
the curvature integral is improper. It is a subtle question why
this integral converges absolutely exactly when
this is a good singular end. See Proposition~\ref{prop-goodsingular}.

\end{document}